\documentclass[a4,10pt]{amsart}

\usepackage{ulem}
\usepackage[dvipdfmx]{graphicx,xcolor} 
\usepackage{amscd} 
\usepackage{txfonts}

\newtheorem{thm}{Theorem}[section]
\newtheorem{lem}[thm]{Lemma}
\newtheorem{prop}[thm]{Proposition}
\newtheorem{cor}[thm]{Corollary}

\theoremstyle{definition}

\newtheorem{example}[thm]{Example}

\theoremstyle{rem}
\newtheorem{rem}[thm]{Remark}

\numberwithin{equation}{section}

\makeatletter
\@namedef{subjclassname@2020}{\textup{2020} Mathematics Subject Classification}
\makeatother

\begin{document}

\title[B\"{o}ttcher coordinates]
{Newton polygons and B\"{o}ttcher coordinates near infinity for polynomial skew products}  

\author[K. Ueno]{Kohei Ueno}
\address{Daido University, Nagoya 457-8530, Japan}
\email{k-ueno@daido-it.ac.jp}

\subjclass[2020]{Primary 37F80, Secondary 32H50}
\keywords{Complex dynamics, B\"{o}ttcher coordinates, 
 polynomial skew products, Newton polygons, blow-ups, 
 branched coverings, weighted projective spaces}

\date{}
\dedicatory{}

\begin{abstract}
Let $f(z,w)=(p(z),q(z,w))$ be a polynomial skew product such that 
the degrees of $p$ and $q$ are grater than or equal to $2$.
Under one or two conditions, 
we prove that $f$ is conjugate to a monomial map
on an invariant region near infinity.
The monomial map and the region are determined by
the degree of $p$ and a Newton polygon of $q$.
Moreover,
the region is included in the attracting basin of 
a superattracting fixed or indeterminacy point at infinity,
or in the closure of the attracting basins of two point at infinity.
\end{abstract}

\maketitle

\section{Introduction}

\subsection{Background}

Thanks to B\"{o}ttcher's theorem \cite{b},
the local dynamics around the superattracting fixed point
of a holomorphic germ in dimension 1 is completely well understood:
the germ is conjugate to its lowest degree term
on a neighborhood of the point.
We can apply this theorem to polynomials.
A polynomial of degree grater than or equal to $2$
extends to a holomorphic map on the Riemann sphere
with a superattracting fixed point at infinity, and so
it is conjugate to its highest degree term
on a neighborhood of the infinity.
These changes of coordinate are called 
the B\"{o}ttcher coordinates for the germ at the fixed point and 
for the polynomial at infinity, and
derives dynamically nice subharmonic functions 
on the attracting basin of the point and on $\mathbb{C}$,
respectively.

B\"{o}ttcher's theorem does not extend to higher dimensions entirely.
As pointed out in \cite{hp}, 
the complexity of the critical orbit of the germ is an obstruction.
Whereas the superattracting fixed point of a holomorphic germ $p$ in dimension 1
is an isolated critical point of $p$ and forward invariant under $p$,
the superattracting fixed point of a holomorphic germ $f$ in dimension 2 
is contained in the critical set of $f$,
which may not be forward invariant under $f$.
The case of polynomial maps has more difficulties.
Although a polynomial map on $\mathbb{C}^2$ extends to 
a rational map on the projective space $\mathbb{P}^2$,
it may not be holomorphic and, moreover,
we have to add the line at infinity to $\mathbb{C}^2$,
instead of the point at infinity.
Other compactifications of $\mathbb{C}^2$
have similar difficulties.

Favre and Jonsson \cite{fj SA} studied and gave general theorems 
for both cases in dimension 2.
For superattracting holomorphic germs,
they gave normal forms on regions 
whose closure contains the superattracting fixed point
in Theorems C and 5.1
by using the rigidifications.
Moreover,
using these normal forms,
they investigated the attraction rates and
constructed dynamically nice plurisubharmonic functions 
defined on the attracting basins.
For polynomial maps on $\mathbb{C}^2$,
they gave normal forms on regions near infinity in Theorem 7.7,
investigated the degree growths and
constructed dynamically nice plurisubharmonic functions 
defined on $\mathbb{C}^2$,
assuming that the maps are not conjugate to skew products.
Moreover,
they advanced their study on the dynamics of polynomial maps in \cite{fj poly}.
In particular,
one can find statements on normal forms
in Theorem 3.1 and in Section 5.3.

We are interested in the dynamics of skew products.
A skew product is a germ or map in dimension 2
of the form $f(z,w) = (p(z), q(z,w))$.
See \cite{j} and \cite{fg} for fundamental studies of polynomial skew products.
Let $f$ be a holomorphic skew product germ with a superattracting fixed point at the origin.
Under one or two conditions, 
we \cite{ueno SA} have succeeded in constructing 
a B\"{o}ttcher coordinate for $f$ concretely on an invariant region
whose closure contains the origin, 
which conjugates $f$ to a monomial map.
The original idea in \cite{ueno SA} and in our other previous studies is
to assign a suitable weight.
The monomial map and the region are determined by 
the order of $p$ and the Newton polygon of $q$.
Using the same ideas and results as in \cite{ueno SA},
we investigated the attraction rates on the vertical direction in \cite{ueno attr rates} and
derived plurisubharmonic functions from B\"{o}ttcher coordinate in \cite{ueno Green funs},
which describe the vertical dynamics well and
some of which do not appear in \cite{fj SA}.
 
In this paper
we adapt the same ideas as in \cite{ueno SA} 
to the case of polynomial skew products.
Let $f$ be a polynomial skew product on $\mathbb{C}^2$.
Under one or two conditions, 
we construct a B\"{o}ttcher coordinate for $f$ concretely 
on an invariant region near infinity,
which conjugates $f$ to a monomial map.
The monomial map and the region are determined by 
the degree of $p$ and a Newton polygon of $q$.
Here the definition of a Newton polygon is 
different or opposite from the usual one.
The map $f$ extends to the rational map 
on the projective space or a weighted projective space, and
the region is included in the attracting basin of 
a superattracting fixed or indeterminacy point at infinity, or
in the closure of the attracting basin of two points at infinity.
This result completes our previous study in \cite{ueno poly}
and gives a well organized consequence. 
We expect that the ideas and results in this paper are useful 
to investigate the attraction rates on the vertical direction and
to derive plurisubharmonic functions
which describe the vertical dynamics well.

\subsection{Main results}

Let us state our main results precisely.
Let $f$ be a polynomial skew product on $\mathbb{C}^2$
of the form $f(z,w) = (p(z),q(z,w))$,
where $\deg p = \delta \geq 2$ and $\deg q \geq 2$.
Then we may write 
$p(z) = a_{\delta} z^{\delta} + o(z^{\delta})$,
where $a_{\delta} \neq 0$, and
$q(z,w) = \sum_{i, \, j \geq 0} b_{ij} z^{i} w^{j}$.
It is clear that the dominant term of $p$ is $a_{\delta} z^{\delta}$.
On the other hand,
we can find a `dominant' term $b_{\gamma d} z^{\gamma} w^d$ of $q$
by making use of the degree of $p$ and a Newton polygon of $q$; 
thus
\[
p(z) = a_{\delta} z^{\delta} + o(z^{\delta})
\text{ and }
q(z,w) = b_{\gamma d} z^{\gamma} w^d + \sum_{(i, j) \neq (\gamma, d)} b_{ij} z^{i} w^{j}.
\]
More precisely,
$b_{\gamma d} z^{\gamma} w^d$ is dominant on an region
$U = \{ |z|^{l_1 + l_2} > R^{l_2} |w|, |w| > R |z|^{l_1} \}$ 
for rational numbers $0 \leq l_1 < \infty$ and $0 < l_2 \leq \infty$,
which are also determined by 
the degree of $p$ and a Newton polygon of $q$ and
called weights in \cite{ueno poly} and \cite{ueno SA}. 

We define the Newton polygon $N(q)$ of $q$ as 
the convex hull of the union of $D(i,j)$ with $b_{ij} \neq 0$,
where $D(i,j) = \{ (x,y) \ | \ x \leq i, y \leq j \}$. 
This definition is different or opposite from the usual one.
Let $(n_1, m_1)$, $(n_2, m_2)$, $\cdots, (n_s, m_s)$ be the vertices of $N(q)$,
where $n_1 < n_2 < \cdots < n_s$ and $m_1 > m_2 > \cdots > m_s$.
Let $T_k$
be the $y$-intercept of the line $L_k$ passing through 
the vertices $(n_k, m_k)$ and $(n_{k+1}, m_{k+1})$
for each $1 \leq k \leq s-1$.

\vspace{2mm}
\begin{itemize}
\item[Case 1] 
If $s = 1$, then $N(q)$ has the only one vertex, which is denoted by $(\gamma, d)$. \\
For this case, we define
$l_1 =  l_2^{-1} = 0$ and so $U = \{ |z| > R, |w| > R \}$.
\end{itemize}
\vspace{2mm}

Difficulties appear when $s > 1$,
which is divided into the following three cases. 

\vspace{2mm}
\begin{itemize}
\item[Case 2] 
  If $s > 1$ and $\delta \leq T_{1}$, then  we define
  \[
  (\gamma, d) = (n_1, m_1), \ 
  l_1 = \frac{n_2 - n_1}{m_1 - m_2}
  \text{ and } l_2^{-1} = 0.
  \] 
  Hence $U = \{ |z| > R, |w| > R |z|^{l_1} \}$. 
  \vspace{4mm}
\item[Case 3] 
  If $s > 1$ and $T_{s-1} \leq \delta$, then  we define
  \[
  (\gamma, d) = (n_{s}, m_{s}), \
  l_1 = 0 
  \text{ and } l_2 = \frac{n_s - n_{s-1}}{m_{s-1} - m_s}.
  \] 
  Hence $U = \{ |z|^{l_2} > R^{l_2} |w|, |w| > R \} = \{ R < |w| < R^{-l_2} |z|^{l_2} \}$.  
  \vspace{4mm}
\item[Case 4]
  If $s > 2$ and $T_{k-1} \leq \delta \leq T_{k}$ for some $2 \leq k \leq s-1$, then  we define
  \[
  (\gamma, d) = (n_k, m_k), \ 
  l_1 = \frac{n_{k+1} - n_k}{m_k - m_{k+1}} 
  \text{ and } l_1 + l_2 = \frac{n_k - n_{k-1}}{m_{k-1} - m_k}. 
  \] 
  Hence $U = \{ R |z|^{l_1} < |w| < R^{-l_2} |z|^{l_1 + l_2} \}$.
\end{itemize}
\vspace{2mm}


Let $f_0(z,w) = (p_0(z), q_0(z,w)) = (a_{\delta} z^{\delta}, b_{\gamma d} z^{\gamma} w^d)$.

\begin{prop}\label{main lemma}
If $d \geq 2$ 
or if $d = 1$ and $\delta \neq T_k$ for any $k$, 
then
\begin{enumerate}
\item for any small $\varepsilon >0$, 
there is $R >0$ such that
$|p - p_0| < \varepsilon |p_0|$ and 
$|q - q_0| < \varepsilon |q_0|$ on $U$, and 
\item $f(U) \subset U$ for large $R >0$.
\end{enumerate}
\end{prop}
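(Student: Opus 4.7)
The plan is to reduce every required estimate to monomial comparisons in $|z|$, $|w|$, and $R$, and to read off the signs of the resulting exponents from the geometry of $N(q)$ — specifically from the equations of the lines $L_k$ and the concavity of the upper boundary of $N(q)$. As a preliminary I would verify that $|z| > R$ holds throughout $U$ in every case; in Cases 3 and 4 this follows by combining the two inequalities defining $U$ with $l_2 > 0$, yielding $|z| > R^{1 + 1/l_2}$.

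For part (1), the bound on $p$ is immediate from $p - p_0 = o(z^\delta)$ together with $|z| > R$. For $q$, I would write the finite sum
\[
q - q_0 = \sum_{(i,j) \ne (\gamma, d),\ b_{ij} \ne 0} b_{ij}\, z^i w^j
\]
and show that each ratio $|z|^{i-\gamma}|w|^{j-d}$ tends to $0$ on $U$ as $R \to \infty$. Since $b_{ij} \ne 0$ implies $(i,j) \in N(q)$, the point $(i,j)$ lies below every line $L_k$ extended; in Case 2, with $(\gamma,d) = (n_1, m_1)$, this reads $(i-\gamma) + l_1(j-d) \leq 0$. Combined with the defining inequality $|w| > R|z|^{l_1}$ of $U$, splitting on the sign of $j - d$ and substituting give
\[
|z|^{i-\gamma}|w|^{j-d} \leq R^{a}\, |z|^{b}
\]
with $a \leq 0$ and $b \leq 0$; the strictness forcing the bound to $0$ comes from $(i,j) \ne (\gamma, d)$ together with the vertex property of $(\gamma,d)$, and the borderline subcase $j = d$ is handled by $|z| > R$ directly. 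Cases 1, 3 and 4 are analogous, using the line(s) $L_{k-1}$ and $L_k$ adjacent to $(\gamma, d)$.

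For part (2), part (1) lets me replace $p$ and $q$ by $p_0 = a_\delta z^\delta$ and $q_0 = b_{\gamma d} z^\gamma w^d$ up to arbitrarily small multiplicative error, so it suffices to verify the defining inequalities of $U$ for $(p_0(z), q_0(z,w))$ when $(z,w) \in U$ and $R$ is large. The condition $|p_0(z)| > R$ is trivial. Substituting the bounds on $|w|$ available on $U$ into the remaining inequalities reduces each to an estimate of the form
\[
C\, R^{A}\, |z|^{B} > 1 \qquad (\text{or the reverse inequality}),
\]
where $B = \gamma + w_{\bullet}(d - \delta)$ for the weight $w_{\bullet} \in \{l_1,\, l_1 + l_2\}$ relevant to the inequality and $A$ is linear in $d$. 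The case hypothesis relating $\delta$ to $T_k$ (or $T_{k-1}$) makes $B$ have the correct sign in the weak sense, and the additional hypothesis $d \geq 2$ makes $A$ strictly of the required sign, while the alternative ``$d = 1$ and $\delta \ne T_k$'' instead makes $B$ strictly of the required sign. In either case the left-hand side diverges or vanishes as $R \to \infty$, proving invariance for $R$ large.

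The main obstacle is bookkeeping across the four cases: in Cases 3 and 4 both sides of an annular condition must be verified, so two distinct weight lines $L_{k-1}, L_k$ enter simultaneously and must be matched with the correct substitution for $|w|$ in each inequality. The concavity of the upper boundary of $N(q)$ — equivalently, the fact that the slope parameters attached to consecutive edges decrease in $k$ — is what guarantees that all the relevant exponents land on the correct side of $0$, and tracking this geometric content is what makes the four cases fit together.
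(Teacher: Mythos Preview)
Your proposal is correct and matches the paper's proof in content: both rest on the Newton-polygon inequalities $i + l_1 j \le \gamma + l_1 d$ and $i + (l_1 + l_2) j \le \gamma + (l_1 + l_2) d$ for all $(i,j)$ with $b_{ij} \ne 0$, and both handle invariance via the signs of $\gamma + l_1(d - \delta)$ and $\gamma + (l_1+l_2)(d-\delta)$ exactly as you describe, with the dichotomy $d \ge 2$ versus $d = 1$, $\delta \ne T_k$ supplying strictness in $A$ or in $B$ respectively.

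The only organizational difference is that the paper introduces auxiliary variables rather than splitting on the sign of $j - d$: it sets $|c| = |w|/|z|^{l_1}$ (Cases~2 and~4) and $|t| = |z|/|w|^{1/l_2}$ or $|t| = |z|/|c|^{1/l_2}$ (Cases~3 and~4), which turns $U$ into the product $\{|t| > R,\ |c| > R\}$. In those coordinates every ratio $|z^i w^j|/|z^\gamma w^d|$ becomes $|t|^{-a}|c|^{-b}$ with $a,b \ge 0$ and at least one strict, so no case analysis on $j - d$ is needed and each invariance check reduces to a single monomial estimate in $(t,c)$. Your direct-substitution approach is equivalent and has the merit of making the role of each boundary inequality of $U$ explicit; the paper's change of variables is cleaner bookkeeping and dovetails with the blow-up interpretation it develops in the surrounding sections.
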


This proposition induces a conjugacy on $U$ from $f$ to $f_0$ 
as in the one dimensional case.

\begin{thm}\label{main theorem}
If $d \geq 2$ or if $d = 1$ and $\delta \neq T_k$ for any $k$, 
then there is a biholomorphic map $\phi$ defined on $U$
that conjugates $f$ to $f_0$ for large $R >0$.
Moreover,
for any small $\varepsilon > 0$, 
there is $R > 0$ such that 
$|\phi_1 - z| < \varepsilon |z|$ and 
$|\phi_2 - w| < \varepsilon |w|$ on $U$,
where $\phi = (\phi_1, \phi_2)$.
\end{thm}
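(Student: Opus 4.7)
The plan is to construct $\phi$ as the limit of the ``partial conjugacies'' $\Phi_n := f_0^{-n} \circ f^n$ on $U$, with an appropriate choice of branches of the roots, imitating the classical one-dimensional B\"ottcher construction. First I would write out the iterates of the monomial model explicitly: $f_0^n(z,w) = (A_n z^{\delta^n},\, B_n z^{G_n} w^{d^n})$, where $A_n = a_\delta^{(\delta^n-1)/(\delta-1)}$, $G_{n+1} = \gamma \delta^n + d G_n$, and a parallel recursion holds for $B_n$. Writing $f^n(z,w) = (p^n(z), q_n(z,w))$, the relation $f_0^n \circ \Phi_n = f^n$ forces
\[
\phi_1^{(n)}(z) = \bigl(p^n(z)/A_n\bigr)^{1/\delta^n}, \qquad \phi_2^{(n)}(z,w) = \bigl(q_n(z,w)/B_n\bigr)^{1/d^n} \bigl(p^n(z)/A_n\bigr)^{-G_n/(\delta^n d^n)}.
\]
Invariance $f(U) \subset U$ from Proposition~\ref{main lemma}(2) makes each $f^n$ well-defined on $U$, and the closeness estimates of Proposition~\ref{main lemma}(1) ensure that $p^n(z)$ and $q_n(z,w)$ are non-vanishing and close to their dominant monomials throughout $U$. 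Consequently, principal branches of the required roots can be chosen consistently so that each $\Phi_n$ is holomorphic on $U$.

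Next I would establish uniform convergence $\Phi_n \to \phi$ on $U$ for $R$ large, via telescoping ratios. For the first coordinate the ratio
\[
\frac{\phi_1^{(n+1)}(z)}{\phi_1^{(n)}(z)} = \left( \frac{p(p^n(z))}{p_0(p^n(z))} \right)^{1/\delta^{n+1}}
\]
is bounded by $(1+\varepsilon)^{1/\delta^{n+1}}$ thanks to Proposition~\ref{main lemma}(1) applied at $f^n(z,w) \in U$, so its deviation from $1$ is geometrically summable. An analogous computation expresses $\phi_2^{(n+1)}/\phi_2^{(n)}$ as $(q/q_0)^{1/d^{n+1}}$ times a bounded power of $(p/p_0)$ evaluated at $f^n(z,w)$, and Proposition~\ref{main lemma}(1) again yields a summable bound. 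The infinite products for $\phi_1$ and $\phi_2$ then converge absolutely and uniformly on $U$, and the asymptotic normalization $|\phi_1 - z| < \varepsilon |z|$ and $|\phi_2 - w| < \varepsilon |w|$ follows from $\Phi_0 = \mathrm{id}$ together with the telescoping bounds.

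The conjugacy equation is built in: on $U$ one has $\Phi_n \circ f = f_0^{-n} \circ f^{n+1} = f_0 \circ \Phi_{n+1}$ with matching branch choices, so passing to the limit gives $\phi \circ f = f_0 \circ \phi$. Biholomorphicity onto $\phi(U)$ then follows from the fact that $\phi$ differs from the identity by a small relative amount in each coordinate, given the product-like structure of $U$.

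The main obstacle, I expect, is the second coordinate. The exponent $G_n/(\delta^n d^n)$ has qualitatively different behaviour according as $\delta > d$, $\delta = d$, or $\delta < d$, and in Cases 3 and 4 it enters with a negative sign, so the factor $(p^n/A_n)^{-G_n/(\delta^n d^n)}$ must be controlled both from above and below on $U$. In Case 4 the region $U$ is a two-sided shell, so the pointwise estimate from Proposition~\ref{main lemma}(1) must be combined with the explicit asymptotics of the recursion $G_{n+1} = \gamma \delta^n + d G_n$ to extract a summable bound that is uniform across all four cases, and the branch choices for the various roots must be coordinated so that the limit is single-valued on $U$.
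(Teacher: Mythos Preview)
Your overall architecture---defining $\Phi_n = f_0^{-n}\circ f^n$, telescoping, and passing to the limit---is exactly the paper's approach, and your multiplicative telescoping is equivalent to the additive telescoping the paper carries out after lifting through the exponential $(z,w)=(e^Z,e^W)$. For $d\geq 2$ your sketch is essentially complete.

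There is a genuine gap when $d=1$. Your ratio $\phi_2^{(n+1)}/\phi_2^{(n)}$ is, as you say, $(q/q_0)^{1/d^{n+1}}$ times a power of $(p/p_0)$, both evaluated at $f^n(z,w)$. But with $d=1$ the exponent $1/d^{n+1}$ is identically $1$, so the first factor enjoys only the \emph{constant} bound $|q/q_0-1|<\varepsilon$ from Proposition~\ref{main lemma}(1); it does not shrink with $n$, and your telescoping series diverges. (The exponent on $p/p_0$ is $\gamma_{n+1}/\delta^{n+1}$, which is bounded but again non-decaying.) You never invoke the hypothesis $\delta\neq T_k$, and this is precisely where it enters: under that assumption the paper proves the sharper invariance $f^n(U_R)\subset U_{2^nR}$, from which $|\zeta(p^n)|$ and $|\eta(f^n)|$ decay like $(2^nR)^{-M}$ for some $M>0$. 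That exponential improvement of the \emph{error terms themselves}, not root-extraction, is what makes the $d=1$ series converge. Your final paragraph worries about the exponent $G_n/(\delta^n d^n)$, but that is not where the difficulty lies.

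A secondary point: your injectivity claim (``$\phi$ differs from the identity by a small relative amount, given the product-like structure of $U$'') is too quick, since in Cases~3 and~4 the region $U$ is not a product. The paper handles this by applying Rouch\'e's theorem fiberwise to the lifted map $F$ in the logarithmic coordinates.
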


We call $\phi$ the B\"{o}ttcher coordinate for $f$ on $U$ and
construct it as the limit of the compositions of $f_0^{-n}$ and $f^{n}$,
where the branch of $f_0^{-n}$ is taken as $f_0^{-n} \circ f_0^n = id$.

\begin{rem}[Two dominant terms]
If $s > 1$ and $\delta = T_k$ for some $1 \leq k \leq s-1$,
then there are two different `dominant' terms of $q$.
Moreover,
if both satisfy the degree condition,
then there are two disjoint invariant regions 
on which $f$ is conjugate to each of the two different monomial maps.
\end{rem}

\begin{rem}[Comparision with our previous results]
We proved the main results for Cases 1 and 2 in \cite{ueno poly}.
More strongly,
we can sometimes enlarge $U$ as proved in \cite{ueno poly}.
For Case 1, 
the same results hold on $U = \{ |z| > R, |w| > R |z|^{l_1^*} \}$
if $l_1^*$ is well defined,
where $l_1^*$ is a non-positive rational number and relates to $l_1$ for Case 2;
see Remark \ref{larger regions for case 1} for details.
Moreover,
for Cases 1 and 2,
the same results hold on $U = \{ |w| > R^{1+l_1^*}, |w| > R |z|^{l_1^*} \}$
and $U = \{ |w| > R^{1+l_1}, |w| > R |z|^{l_1} \}$
if $\gamma = 0$, respectively.
\end{rem}

\begin{rem}[Uniqueness]
It is known that a B\"{o}ttcher coordinate for $p$ is unique up to 
multiplication by an $(\delta - 1)$st root of unity.
A similar uniqueness statement holds
for Cases 1 and 2 with some conditions;
see Proposition 4 in \cite{ueno poly}.
\end{rem}

\subsection{Organization}

We first prove Proposition \ref{main lemma} and
illustrate the main results in term of blow-ups 
when $l_1$ and $l_2^{-1}$ are integer
for Cases 2, 3 and 4
in Sections 2, 3 and 4, respectively,
by the same strategy as in \cite{ueno SA}.
Although Case 2 was already proved in \cite{ueno poly},
we provide uniform presentations 
in terms of Newton polygons and blow-ups.
The proofs of the main results for Case 1
are similar to and simpler than the other cases.
We then introduce intervals of real numbers for each of which the main results hold
in Section 5;
the intervals contain $l_1$ and $l_2$ as important numbers.
Moreover,
we associate rational numbers in the intervals to formal branched coverings of $f$,
which are a generalization of the blow-ups,
and give sufficient conditions for the coverings to be well defined. 
Rational extensions of $f$ to the projective space and weighted projective spaces
are dealt with in Section 6. 
In Sections 5 and 6,
besides $l_1$ and $l_2$,
the weight $\alpha_0 = \gamma/ (\delta - d)$ plays an important role
when $\delta \neq d$.
One may skip Sections 5 and 6 for the proof of the main theorem.

We next prove Theorem \ref{main theorem} in Section 7:
it follows from Proposition \ref{main lemma} that
the composition $\phi_n = f_0^{-n} \circ f^n$ is well defined on $U$,
converges uniformly to $\phi$ on $U$,
and the limit $\phi$ is biholomorphic on $U$.
The proof of the uniform convergence of $\phi_n$ is different
whether $d \geq 2$ or $d = 1$.
We use Rouch\'e's Theorem to obtain the injectivity of $\phi$.
The extension problem of $\phi$ is dealt with in Section 8.
Roughly speaking,
$\phi$ extends by analytic continuation 
until it meets the critical set of $f$.
Finally, other changes of coordinate derived from $\phi$
are shown in Section 9.

The results in Sections 6, 7, 8 and 9 are obtained by 
almost the same or similar arguments as in \cite{ueno poly} and \cite{ueno SA}:
we mainly refer \cite{ueno poly} for Sections 6 and 9 and
\cite{ueno SA} for Sections 7 and 8, respectively.
We mainly use the same notations as in \cite{ueno SA} in this paper.

\section{Main proposition and Blow-ups for Case 2}

We prove Proposition \ref{main lemma} for Case 2 in this section.
Let $s > 1$, 
\[
\delta \leq T_{1}, \ 
(\gamma, d) = (n_1, m_1), \ 
l_1 = \frac{n_2 - n_1}{m_1 - m_2}
\text{ and } l_2^{-1} = 0.
\] 
Then $d \geq 1$, and $\gamma \geq 1$ if $d = 1$. 
We first prove Proposition \ref{main lemma} in Section 2.1
and then illustrate our main results in terms of blow-ups when $l_1$ is integer in Section 2.2.

We assume that $a_{\delta} = 1$ and $b_{\gamma d} = 1$
for simplicity 
through out the paper.
Let us denote $f \sim f_0$ on $U$ as $R \to \infty$ for short
if $f$ satisfies the former statement in Proposition \ref{main lemma}.

\subsection{Proof of the main proposition} 

By definition, we have the following two lemmas.

\begin{lem} \label{lem 1 for main lem: case2} 
It follows that $d \geq j$ for any $j$ such that $b_{ij} \neq 0$.
\end{lem}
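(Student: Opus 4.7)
\bigskip

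\noindent\textbf{Proof plan.} The plan is to read the inequality $d \geq j$ directly off the definition of the Newton polygon. In Case 2 we have $d = m_1$, and by hypothesis $m_1 > m_2 > \cdots > m_s$, so $m_1$ is the largest $y$-coordinate appearing among the vertices of $N(q)$.

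First I would observe that whenever $b_{ij} \neq 0$, the ``south-west quadrant'' $D(i,j)$ is one of the sets whose union is used to build $N(q)$, so $D(i,j) \subset N(q)$; in particular the corner $(i,j)$ itself lies in $N(q)$. Next, since $N(q)$ is the convex hull of such quadrants and the vertices of $N(q)$ are $(n_1,m_1),\dots,(n_s,m_s)$, the maximum of the $y$-coordinate over $N(q)$ is attained at a vertex, hence equals $m_1$. Combining these two observations gives $j \leq m_1 = d$, which is exactly the claim.

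The argument is essentially definitional; no step is really an obstacle. The only point worth stating carefully is why the maximum $y$-coordinate on the convex region $N(q)$ coincides with $\max_k m_k$, but this is immediate from the fact that a linear functional on a convex hull attains its maximum at a vertex, together with the ordering $m_1 > m_2 > \cdots > m_s$. I would therefore keep the proof to essentially two sentences.
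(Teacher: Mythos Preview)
Your argument is correct and matches the paper's approach: the paper simply states that this lemma (and the next) follow ``by definition,'' giving no further proof, and your two-sentence unpacking of why $(i,j)\in N(q)$ forces $j\le m_1=d$ is exactly the intended justification.
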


More precisely,
$(\gamma, d)$ is maximum in the sense that
$d \geq j$, and $\gamma \geq i$ if $d = j$.

\begin{lem} \label{lem 2 for main lem: case2} 
It follows that
$\gamma + l_1 d \geq i + l_1 j$ and
$\gamma + l_1 d \geq l_1 \delta$
for any $(i,j)$ such that $b_{ij} \neq 0$.
\end{lem}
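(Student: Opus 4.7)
The plan is to derive both inequalities directly from the algebraic relations defining $(\gamma, d)$, $l_1$, and $T_1$, using only the convexity of the Newton polygon. The first move I would make is to record the identity $l_1 T_1 = \gamma + l_1 d$. Indeed, the edge $L_1$ passes through $(n_1, m_1) = (\gamma, d)$ and $(n_2, m_2)$, and by the definition of $l_1$ its slope is $-1/l_1$. Hence $L_1$ has the equation $x + l_1 y = \gamma + l_1 d$, and setting $x = 0$ yields $T_1 = d + \gamma / l_1$.

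The second inequality then follows in one line. Because $n_2 > n_1$ and $m_1 > m_2$, we have $l_1 > 0$, so multiplying the hypothesis $\delta \leq T_1$ through by $l_1$ gives $l_1 \delta \leq l_1 T_1 = \gamma + l_1 d$.

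For the first inequality, I would introduce the linear functional $\psi(x, y) = x + l_1 y$. Since $l_1 \geq 0$, $\psi$ is non-decreasing in each coordinate, so its maximum over $D(i, j) = \{x \leq i,\ y \leq j\}$ is attained at the corner $(i, j)$. As $N(q)$ is the convex hull of the union of the $D(i, j)$ over $(i, j)$ with $b_{ij} \neq 0$, the maximum of $\psi$ on $N(q)$ is attained at one of the vertices $(n_k, m_k)$. It therefore suffices to show $\psi(n_k, m_k) \leq \psi(n_1, m_1) = \gamma + l_1 d$ for every $k$. The identity above says precisely that the level set $\{\psi = \gamma + l_1 d\}$ coincides with $L_1$, so equality holds for $k = 1, 2$. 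For $k \geq 3$, convexity of $N(q)$ forces $(n_k, m_k)$ to lie strictly below $L_1$ extended (equivalently, the edge slopes $-1/l_k$ are strictly decreasing in $k$), which gives the strict inequality $\psi(n_k, m_k) < \gamma + l_1 d$.

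There is no serious obstacle: the lemma is essentially a bookkeeping statement about the Newton polygon. The one conceptual point worth isolating is that the weight $l_1$ was chosen precisely as the reciprocal slope aligning $\psi$ with the edge $L_1$, which is simultaneously responsible for making $(\gamma, d)$ a maximizer of $\psi$ on $N(q)$ (tied with $(n_2, m_2)$) and for the identity $l_1 T_1 = \gamma + l_1 d$ that converts the hypothesis $\delta \leq T_1$ into the second inequality. Once this is noted, both claims reduce to a couple of lines of arithmetic.
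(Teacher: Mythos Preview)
Your proof is correct and follows essentially the same approach as the paper. The paper's one-sentence proof observes that $l_1\delta$, $\gamma + l_1 d$, and $i + l_1 j$ are the $x$-intercepts of the lines of slope $-l_1^{-1}$ through $(0,\delta)$, $(\gamma,d)$, and $(i,j)$; this $x$-intercept is exactly your linear functional $\psi(x,y)=x+l_1 y$, so the two arguments are the same, with yours spelling out the supporting-line/convexity step that the paper leaves implicit.
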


\begin{proof}
The numbers $l_1 \delta$, $\gamma + l_1 d$ and $i + l_1 j$ are 
the $x$-intercepts of the lines with slope $-l_1^{-1}$
passing through the points $(0, \delta)$, $(\gamma, d)$ and $(i,j)$. 
\end{proof}

Note that
$\gamma + l_1 d = n_2 + l_1 m_2$ and 
$\gamma + l_1 d > n_j + l_1 m_j$ for any $j \geq 3$.
Let
\[
\zeta (z) = \frac{p(z) - z^{\delta}}{z^{\delta}}
\text{ and }
\eta (z,w) = \frac{q(z,w) - z^{\gamma} w^d}{ z^{\gamma} w^d}.
\]

\begin{proof}[Proof of Proposition \ref{main lemma} for Case 2]  
We first 
show the former statement.  
It is clear that, 
for any small $\varepsilon$,
there is $R$ such that
$|\zeta| < \varepsilon$ on $U$. 
Let $l=l_1$ and $|w| = |z^l c|$.
Then 
\begin{align*}
U
& = \{ |z| > R, |w| > R |z|^{l} \} = \{ |z| > R, |c| > R \} \text{ and} \\
|\eta (z,w)|
& = \left| \sum \frac{b_{ij} z^{i} w^{j}}{z^{\gamma} w^{d}} \right| 
= \left| \sum \frac{b_{ij} z^{i} (z^l c)^{j}}{z^{\gamma} (z^l c)^{d}} \right| 
= \left| \sum \frac{b_{ij} z^{i +lj} c^{j}}{z^{\gamma + ld} c^{d}} \right| 
 \leq \sum \dfrac{|b_{ij}|}{|z|^{(\gamma + ld) - (i + lj)} |c|^{d - j}},
\end{align*}
where the sum is taken over all $(i,j) \neq (\gamma, d)$
such that $b_{ij} \neq 0$.
It follows from Lemmas \ref{lem 1 for main lem: case2} 
and \ref{lem 2 for main lem: case2} that
$\gamma + ld \geq i + lj$ and $d \geq j$.
Moreover,
for each $(i,j) \neq (\gamma, d)$,
at least one of the inequalities $(\gamma + ld) - (i + lj) > 0$ and $d - j > 0$ holds.
More precisely,
$(\gamma + ld) - (i + lj) \geq \gamma - i \geq 1$ 
and/or $d - j \geq 1$.
Therefore, 
for any small $\varepsilon$,
there is $R$ such that
$|\eta| < \varepsilon$ on $U$. 

We next show the invariance of $U$.
Since the inequality $|p(z)| > R$ is trivial,
it is enough to show that
$|q(z,w)| > R |p(z)|^{l}$ for any $(z,w)$ in $U$.
We have that
\[
\left| \frac{q(z,w)}{p(z)^{l}} \right| 
\sim \left| \frac{z^{\gamma} w^{d}}{(z^{\delta})^{l}} \right| 
= \left| \frac{z^{\gamma} (z^l c)^{d}}{(z^{\delta})^{l}} \right| 
= |z|^{\gamma + ld - l \delta}  |c|^{d}
\]
on $U$ as $R \to \infty$.
Let $\tilde{\gamma} = \gamma + l d - l \delta$.
Then $\tilde{\gamma} \geq 0$ by
Lemma \ref{lem 2 for main lem: case2}.
If $d \geq 2$, 
then $|z|^{\tilde{\gamma}}  |c|^{d} \geq |c|^d > R^d$ and so
$|q/p^{l}| \geq C R^d > R$ 
for some constant $C$ and sufficiently large $R$.
If $d = 1$ and $\delta < T_1$,
then $\tilde{\gamma} > 0$ and so
$|z|^{\tilde{\gamma}}  |c|^{d} > R^{\tilde{\gamma} + 1}$.
Hence $|q/p^{l}| \geq C R^{\tilde{\gamma} + 1} > R$ 
for some constant $C$ and sufficiently large $R$.
\end{proof} 

\subsection{Blow-ups}

Assume that $l_1$ is integer.
Against the previous paper \cite{ueno SA},
we do not assume that $p(z) = z^{\delta}$ here.
Let $\pi_1 (z,c) = (z, z^{l} c)$
and $\tilde{f} = \pi_1^{-1} \circ f \circ \pi_1$,
where $l = l_1$.
Note that $\pi_1$ is the $l$th compositions of the blow-up $(z,c) \to (z,zc)$. 
Then 
\begin{align*}
\tilde{f} (z,c) &= (\tilde{p}(z), \tilde{q}(z,c)) = \left( p(z), \ \dfrac{q(z,z^l c)}{p(z)^l} \right)
\text{ and} \\
\tilde{q}(z,c)
&= \dfrac{ z^{\gamma+ld - l \delta} c^d + \sum b_{ij} z^{i + lj - l \delta} c^{j} }{ \left\{ 1 + \zeta (z) \right\}^l } 
= \dfrac{ z^{\gamma+ld - l \delta} c^d }{ \left\{ 1 + \zeta (z) \right\}^l }
\cdot \left\{ 1 + \sum \dfrac{b_{ij}}{ z^{(\gamma + ld) - (i + lj)} c^{d - j} } \right\}.
\end{align*}
Note that $\pi_1^{-1} (U) = \{ |z| > R, |c| > R \}$.

\begin{prop} \label{} 
If $l_1 \in \mathbb{N}$, 
then $\tilde{f}$ is well defined, rational and skew product 
on $\mathbb{C}^2$ and holomorphic on $\{ |z| > R \}$. 
More precisely,
\[
\tilde{f} (z,c) = \left( z^{\delta} \{ 1 + \zeta (z) \}, \ 
z^{\gamma + l_1 d - l_1 \delta} c^d \cdot 
\dfrac{1 + \eta (z,c)}{\left\{ 1 + \zeta (z) \right\}^{l_1}} \right),
\]
where $\zeta$, $\eta \to 0$ 
on $\{ |z| > R, |c| > R \}$ as $R \to \infty$.
\end{prop}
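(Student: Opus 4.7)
The plan is a direct algebraic computation, since the proposition is essentially a restatement of the definition $\tilde f = \pi_1^{-1} \circ f \circ \pi_1$ packaged through the dominant-term factorizations already built into $\zeta$ and $\eta$. I would begin by writing out the composition coordinate-wise: using $\pi_1(z,c) = (z, z^{l_1}c)$ and the formal inverse $\pi_1^{-1}(Z,W) = (Z, W/Z^{l_1})$, one gets
\[
\tilde f(z,c) = \left( p(z),\ \frac{q(z, z^{l_1} c)}{p(z)^{l_1}} \right).
\]
From the definitions $p(z) = z^{\delta}\{1+\zeta(z)\}$ and $q(z,w) = z^{\gamma} w^{d}\{1 + \eta(z,w)\}$ (valid since $a_{\delta} = b_{\gamma d} = 1$), substituting $w = z^{l_1} c$ into the second factor immediately yields the displayed formula, with $\eta(z,c)$ interpreted as $\eta(z, z^{l_1} c)$.

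To check the structural claims, I would verify three points. First, rationality: since $l_1 \in \mathbb{N}$, expanding $q(z, z^{l_1} c) = \sum b_{ij} z^{i+l_1 j} c^{j}$ shows the numerator is a polynomial in $(z,c)$, and Lemma \ref{lem 2 for main lem: case2} gives $\gamma + l_1 d - l_1 \delta \geq 0$, so after cancelling $z^{l_1\delta}$ from $p(z)^{l_1} = z^{l_1 \delta}\{1+\zeta(z)\}^{l_1}$ the remaining $z^{\gamma + l_1 d - l_1 \delta}$ is also polynomial; the only surviving denominator is the polynomial $p(z)^{l_1}$. Second, the skew-product property is visible from the formula itself, as the first coordinate depends only on $z$. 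Third, holomorphy on $\{|z|>R\}$: $p(z)^{l_1}$ vanishes only at the finitely many zeros of $p$, all contained in some bounded disk, so enlarging $R$ if necessary avoids them.

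The decay statements are essentially on record: $\zeta(z) = (p(z) - z^{\delta})/z^{\delta} \to 0$ as $|z| \to \infty$ because $\deg(p - z^{\delta}) < \delta$, and $\eta(z, z^{l_1} c) \to 0$ on $\pi_1^{-1}(U) = \{|z| > R,\ |c| > R\}$ as $R \to \infty$ was already established during the proof of Proposition \ref{main lemma} (where $|\eta|$ was bounded by a sum of terms $|b_{ij}|/(|z|^{(\gamma + l_1 d)-(i + l_1 j)}|c|^{d - j})$ with each exponent non-negative and at least one strictly positive). The main — and rather minor — obstacle is simply exponent bookkeeping: one must be careful that $l_1 \in \mathbb{N}$ together with the inequality $\gamma + l_1 d \geq l_1 \delta$ from Lemma \ref{lem 2 for main lem: case2} forces every relevant exponent of $z$ to be a non-negative integer, so the apparent rational expression is genuinely a ratio of polynomials rather than a merely meromorphic object.
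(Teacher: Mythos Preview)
Your proposal is correct and follows essentially the same approach as the paper: the paper performs the explicit computation of $\tilde f = \pi_1^{-1}\circ f\circ\pi_1$ in the lines immediately preceding the proposition, and the proposition is then stated as a summary of that computation together with the decay of $\zeta$ and $\eta$ already established in the proof of Proposition~\ref{main lemma}. One minor comment: for rationality on $\mathbb{C}^2$ you do not actually need the inequality $\gamma + l_1 d \geq l_1\delta$ from Lemma~\ref{lem 2 for main lem: case2}, since $q(z,z^{l_1}c)/p(z)^{l_1}$ is manifestly a ratio of polynomials once $l_1\in\mathbb{N}$; that inequality is only used (as the paper implicitly does) to see that the exponent in the displayed factored form is a non-negative integer.
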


\begin{rem} \label{}  
Even if $l_1$ is rational,
we can lift $f$ to a rational skew product similar to $\tilde{f}$
as stated in Proposition \ref{branched coverings: case2} in Section 5.1.
\end{rem}

As explained below, $\tilde{f}$ is a rational skew product in Case 1.
Therefore,
we can construct the B\"{o}ttcher coordinate for $\tilde{f}$ 
on $\{ |z| > R, |c| > R \}$, 
which induces the B\"{o}ttcher coordinate for $f$ on $U$.

We can define the Newton polygon $N(\tilde{q})$ of the rational function $\tilde{q}$ 
in a similar fashion to that of $q$ 
by permitting negative indexes and using the Taylor expression of $\zeta$ near infinity.
Let 
$\tilde{\gamma} = \gamma + l_1 d - l_1 \delta$,
$\tilde{i} = i + l_1 j - l_1 \delta$ and 
$\tilde{n}_k = n_k + l_1 m_k - l_1 \delta$.
Then 
$\tilde{q}(z,c) = ( z^{\tilde{\gamma}} c^d + \sum b_{ij} z^{\tilde{i}} c^{j} ) \{ 1 + \zeta (z) \}^{-l_1}$,
$N(\tilde{q})$ coincides with 
the Newton polygon of $z^{\tilde{\gamma}} c^d + \sum b_{ij} z^{\tilde{i}} c^{j}$
and the candidates of the vertices are $(\tilde{n}_k, m_k)$'s.
Lemma \ref{lem 2 for main lem: case2} is translated into the following.

\begin{lem} \label{} 
It follows that 
$\tilde{\gamma} \geq \tilde{i}$ and 
$\tilde{\gamma} \geq 0$ for any $(i,j)$ such that $b_{ij} \neq 0$.
\end{lem}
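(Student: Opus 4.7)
The plan is to obtain both inequalities as immediate translations of Lemma \ref{lem 2 for main lem: case2} under the substitutions $\tilde{\gamma} = \gamma + l_1 d - l_1\delta$ and $\tilde{i} = i + l_1 j - l_1\delta$. In other words, the new exponents are obtained from the old ones by subtracting the constant $l_1 \delta$, so any linear inequality among the old exponents transports directly to an inequality among the new ones.

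Concretely, I would first recall Lemma \ref{lem 2 for main lem: case2}, which gives the two weighted bounds $\gamma + l_1 d \geq i + l_1 j$ and $\gamma + l_1 d \geq l_1 \delta$ for every $(i,j)$ with $b_{ij}\neq 0$. Subtracting $l_1\delta$ from both sides of the first bound yields $\tilde{\gamma} \geq \tilde{i}$, and the second bound is literally $\tilde{\gamma} \geq 0$ after moving $l_1\delta$ to the left-hand side. Geometrically, this corresponds to the fact that the affine shear $(i,j)\mapsto (i + l_1 j - l_1\delta, j)$ sends the line through $(0,\delta)$ of slope $-l_1^{-1}$ to the vertical axis, and sends the supporting line of $N(q)$ at $(\gamma,d)$ of the same slope to a vertical supporting line of $N(\tilde{q})$ at $(\tilde{\gamma},d)$.

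There is no genuine obstacle here; the statement is a bookkeeping lemma whose purpose is to certify that after lifting by $\pi_1$ the new ``dominant'' exponent $\tilde{\gamma}$ remains maximal along the $z$-direction among the exponents appearing in $\tilde{q}$, and in addition is non-negative, so that the polynomial part of $\tilde{q}$ is holomorphic in a neighborhood of $\{|z|>R\}$ as asserted in the preceding proposition.
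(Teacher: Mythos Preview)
Your proof is correct and matches the paper's own treatment: the paper simply remarks that this lemma is the translation of Lemma~\ref{lem 2 for main lem: case2} under the substitutions $\tilde{\gamma} = \gamma + l_1 d - l_1\delta$ and $\tilde{i} = i + l_1 j - l_1\delta$, which is exactly what you carry out.
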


Consequently,
$N(\tilde{q})$ 
has just one vertex $(\tilde{\gamma}, d)$:
$N(\tilde{q}) = D(\tilde{\gamma}, d)$.
In this sense,
we may say that
the rational skew product $\tilde{f}$ belongs to Case 1.

\section{Main proposition and Blow-ups for Case 3}

We prove Proposition \ref{main lemma} for Case 3 in this section. 
Let $s > 1$,
\[
T_{s-1} \leq \delta, \ 
(\gamma, d) = (n_{s}, m_{s}), \
l_1 = 0 
\text{ and } l_2 = \frac{n_s - n_{s-1}}{m_{s-1} - m_s}.
\] 
Then $\delta > d$ and $\gamma > 0$. 
Similar to the previous section,
we first prove Proposition \ref{main lemma} in Section 3.1 and then
illustrate our main results in terms of blow-ups when $l_2^{-1}$ is integer in Section 3.2.

\subsection{Proof of the main proposition}

By definition, we have the following two lemmas.

\begin{lem} \label{lem1 for main lem: case3} 
It follows that
$\gamma \geq i$ for any $i$ such that $b_{ij} \neq 0$.
\end{lem}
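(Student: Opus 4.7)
The plan is to read off this inequality directly from the definition of the Newton polygon $N(q)$ together with the choice $(\gamma, d) = (n_s, m_s)$ specific to Case 3. Recall that $N(q)$ is the convex hull of $\bigcup_{b_{ij} \neq 0} D(i,j)$, where each $D(i,j) = \{(x,y) : x \leq i,\ y \leq j\}$ is the ``lower-left quadrant'' based at $(i,j)$. The vertices are enumerated so that $n_1 < n_2 < \cdots < n_s$, so $\gamma = n_s$ is the largest $x$-coordinate appearing among the vertices of $N(q)$.

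The key geometric observation I would make first is that $N(q)$ is contained in the half-plane $\{x \leq n_s\}$. This follows because the supremum of the $x$-coordinate on a convex hull equals the supremum over the defining union; for $\bigcup_{b_{ij}\neq 0} D(i,j)$ this supremum is $\max\{i : b_{ij} \neq 0\}$, and that maximum is attained at the rightmost vertex $(n_s, m_s)$ (otherwise an index $i > n_s$ would force a further vertex to the right, contradicting the enumeration of $(n_1,m_1), \ldots, (n_s,m_s)$ as the full vertex set).

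Once this inclusion $N(q) \subseteq \{x \leq n_s\}$ is in hand, the lemma is immediate: for any $(i,j)$ with $b_{ij} \neq 0$, the point $(i,j)$ lies in $D(i,j) \subseteq N(q)$, hence $i \leq n_s = \gamma$.

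The main ``obstacle'' is really just confirming the geometrically obvious fact that the rightmost vertex determines the rightmost boundary of $N(q)$; I would handle this with the one-line convex-hull argument above. Structurally, the statement is the mirror image of Lemma \ref{lem 1 for main lem: case2}: there $(\gamma, d) = (n_1, m_1)$ is the vertex maximizing the $y$-coordinate, giving $d \geq j$; here $(\gamma, d) = (n_s, m_s)$ is the vertex maximizing the $x$-coordinate, giving $\gamma \geq i$. The proof pattern in Case 2 can therefore be transcribed with the roles of $x$ and $y$ interchanged.
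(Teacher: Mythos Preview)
Your argument is correct and matches the paper's approach exactly: the paper simply states that this lemma (together with Lemma \ref{lem2 for main lem: case3}) holds ``by definition,'' and what you have written is precisely the unpacking of that definition, namely that $(\gamma,d)=(n_s,m_s)$ is the rightmost vertex of $N(q)$ and hence bounds every $i$ with $b_{ij}\neq 0$.
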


More precisely,
$(\gamma, d)$ is maximum in the sense that
$\gamma \geq i$, and $d \geq j$ if $\gamma = i$.

\begin{lem} \label{lem2 for main lem: case3} 
It follows that
$l_2 \delta \geq \gamma + l_2 d \geq i + l_2 j$ 
for any $(i,j)$ such that $b_{ij} \neq 0$.
\end{lem}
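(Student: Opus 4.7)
The plan is to mirror the geometric argument used for Lemma \ref{lem 2 for main lem: case2}, now applied to the line with slope $-l_2^{-1}$ instead of slope $-l_1^{-1}$. As in Case 2, the three numbers $l_2 \delta$, $\gamma + l_2 d$, and $i + l_2 j$ can be interpreted as the $x$-intercepts of the three parallel lines of slope $-l_2^{-1}$ passing respectively through the points $(0, \delta)$, $(\gamma, d) = (n_s, m_s)$, and $(i, j)$. So the entire inequality reduces to comparing the relative positions of these points against a single line.

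First I would identify this line precisely. By the definition $l_2 = (n_s - n_{s-1})/(m_{s-1} - m_s)$, the line of slope $-l_2^{-1}$ through $(\gamma, d) = (n_s, m_s)$ is exactly the edge $L_{s-1}$ of the Newton polygon joining $(n_{s-1}, m_{s-1})$ to $(n_s, m_s)$. Its $x$-intercept is $\gamma + l_2 d$ and its $y$-intercept is, by definition, $T_{s-1}$. Then the right-hand inequality $\gamma + l_2 d \geq i + l_2 j$ for every $(i,j)$ with $b_{ij} \neq 0$ becomes the statement that every such $(i,j)$ lies in the closed half-plane $\{x + l_2 y \leq \gamma + l_2 d\}$, which is immediate from the fact that $(i,j) \in N(q)$ and $L_{s-1}$ is a supporting line of the upper-right boundary of $N(q)$ at its rightmost vertex.

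For the left-hand inequality $l_2 \delta \geq \gamma + l_2 d$, I would translate the hypothesis $T_{s-1} \leq \delta$ of Case 3 into the same geometric picture: it says precisely that the point $(0, \delta)$ lies on or above $L_{s-1}$, equivalently $l_2 \delta \geq \gamma + l_2 d$. There is no real obstacle here; the only care required is to double-check that the Newton polygon convention used in this paper (the convex hull of the quadrants $D(i,j)$, which is opposite to the usual one) indeed puts $N(q)$ on the correct side of $L_{s-1}$ so that the supporting-line inequality points the right way. Once this bookkeeping is settled the proof is just two one-line observations, exactly parallel to the proof of Lemma \ref{lem 2 for main lem: case2}.
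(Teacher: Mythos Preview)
Your proposal is correct and follows exactly the geometric $x$-intercept interpretation that the paper uses for the analogous Lemma~\ref{lem 2 for main lem: case2}; the paper itself records Lemma~\ref{lem2 for main lem: case3} as following ``by definition'' without spelling out the argument, but your supporting-line reading of $L_{s-1}$ and the translation of $T_{s-1}\le\delta$ into $l_2\delta\ge\gamma+l_2 d$ are precisely what is intended.
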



Note that
$\gamma + l_2 d = n_{s-1} + l_2 m_{s-1}$ and
$\gamma + l_2 d > n_{j} + l_2 m_{j}$ for any $j \leq s-2$.


\begin{proof}[Proof of Proposition \ref{main lemma} for Case 3] 
We first 
show the former statement for $q$.
Let $l=l_2$ and $|z| = |tw^{l^{-1}}|$.
Then $U = \{ |z| > R |w|^{l^{-1}}, |w| > R \} = \{ |t| > R, |w| > R \}$ and
\begin{align*}
|\eta (z,w)|
& = \left| \sum \frac{b_{ij} z^{i} w^{j}}{z^{\gamma} w^{d}} \right| 
= \left| \sum \frac{b_{ij} (tw^{l^{-1}})^{i} w^{j}}{(tw^{l^{-1}})^{\gamma} w^{d}} \right| 
= \left| \sum \frac{b_{ij} t^{i} w^{l^{-1} i +j}}{t^{\gamma} w^{l^{-1} \gamma + d}} \right| 
 \leq \sum \dfrac{ |b_{ij}| }{ |t|^{\gamma - i} |w|^{(l^{-1} \gamma + d) - (l^{-1} i + j)} },
\end{align*}
where the sum is taken over all $(i,j) \neq (\gamma, d)$
such that $b_{ij} \neq 0$.
It follows from Lemmas \ref{lem1 for main lem: case3} 
and \ref{lem2 for main lem: case3} that
$\gamma \geq i$ and $l^{-1} \gamma + d \geq l^{-1} i + j$.
Moreover,
for each $(i,j) \neq (\gamma, d)$,
at least one of the inequalities $\gamma > i$ 
and $l^{-1} \gamma + d > l^{-1} i + j$ holds
since $\gamma \geq i$, and $d > j$ if $i = \gamma$.
More precisely,
$\gamma - i \geq 1$ and/or $(l^{-1} \gamma + d) - (l^{-1} i + j) \geq d - j \geq 1$. 
Therefore, 
for any small $\varepsilon$, 
there is $R$ such that
$|\eta| < \varepsilon$ on $U$. 

We next show the invariance of $U$.
Since the inequality $|q(z,w)| > R$ is trivial,
it is enough to show that
$|p(z)| > R |q(z,w)|^{l^{-1}}$ for any $(z,w)$ in $U$.
We have that 
\begin{align*}
\left| \frac{p(z)}{q(z,w)^{l^{-1}}} \right| 
& \sim \left| \frac{z^{\delta}}{(z^{\gamma} w^{d})^{l^{-1}}} \right| 
= \left| \frac{(tw^{l^{-1}})^{\delta}}{\{ (tw^{l^{-1}})^{\gamma} w^{d} \}^{l^{-1}}} \right| 
 = |t|^{\delta - l^{-1} \gamma} |w|^{l^{-1} \{ \delta - (l^{-1} \gamma + d) \}}
\geq |t|^d |w|^{l^{-1} \{ \delta - (l^{-1} \gamma + d) \}}
\end{align*}
on $U$ as $R \to \infty$
because $\delta \geq l^{-1} \gamma + d$.
If $d \geq 2$,
then $|t|^d \geq R^d$ and so
$|p/q^{l^{-1}}| \geq C R^d \geq R$ 
for some constant $C$ and sufficiently large $R$.
If $d = 1$ and $\delta > T_{s-1}$, then
$\delta > l^{-1} \gamma + d$ and so
$|t|^d |w|^{l^{-1} \{ \delta - (l^{-1} \gamma + d) \}} > R^{1 + l^{-1} \{ \delta - (l^{-1} \gamma + d) \}}$.
Hence $|p/q^{l^{-1}}| \geq C R^{1 + l^{-1} \{ \delta - (l^{-1} \gamma + d) \}} \geq R$
for some constant $C$ and sufficiently large $R$.
\end{proof} 

\subsection{Blow-ups}

Assume that $l_2^{-1}$ is integer.
Let $\pi_2 (t,w) = (t w^{l^{-1}}, w)$ and $\tilde{f} = \pi_2^{-1} \circ f \circ \pi_2$,
where $l = l_2$.  
Note that $\pi_2$ is the $l^{-1}$th compositions of the blow-up $(t,w) \to (tw,w)$.
Then 
\begin{align*}
\tilde{f} (t,w) 
&= (\tilde{p} (t,w), \tilde{q} (t,w))
= \left( \dfrac{p(tw^{l^{-1}})}{q(tw^{l^{-1}},w)^{l^{-1}}}, \ q(tw^{l^{-1}} ,w) \right), \\
\tilde{q} (t,w) &= t^{\gamma} w^{l^{-1} \gamma + d} + \sum b_{ij} t^{i} w^{l^{-1} i + j} 
= t^{\gamma} w^{l^{-1} \gamma + d} 
    \left\{ 1 + \sum \dfrac{ b_{ij} }{ t^{\gamma - i} w^{(l^{-1} \gamma + d) - (l^{-1} i + j)} } \right\} \\
&= t^{\gamma} w^{l^{-1} \gamma + d} \{ 1 + \eta (t,w) \}
\text{ and so} \\
\tilde{p} (t,w)
&= t^{\delta - l^{-1} \gamma} w^{l^{-1} \{ \delta - (l^{-1} \gamma + d) \}} \cdot
\dfrac{1 + \zeta (tw^{l^{-1}})}{\{ 1 + \eta (t,w) \}^{l^{-1}}}.
\end{align*}
Note that $\pi_2^{-1} (U) = \{ |t| > R, |w| > R \}$.

\begin{prop} \label{} 
If $l_2^{-1} \in \mathbb{N}$, 
then $\tilde{f}$ is well defined and rational on $\mathbb{C}^2$ 
and holomorphic on $\{ |t| > R, |w| > R \}$. 
More precisely,
\[
\tilde{f} (t,w) = 
\left( t^{\delta - l_2^{-1} \gamma} w^{l_2^{-1} \{ \delta - (l_2^{-1} \gamma + d) \}} \{ 1 + \tilde{\zeta} (t,w) \},
\ t^{\gamma} w^{l_2^{-1} \gamma + d} \{ 1 + \eta (t,w) \} \right),
\]
where $\tilde{\zeta}$, $\eta \to 0$ on $\{ |t| > R, |w| > R \}$ as $R \to \infty$.
\end{prop}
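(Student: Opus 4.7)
The plan is to establish the proposition by direct computation, mirroring the argument already carried out for Case 2, with Lemmas \ref{lem1 for main lem: case3} and \ref{lem2 for main lem: case3} playing the role of their Case 2 analogues. The key structural point is that, because $l_2^{-1}$ is assumed to be a positive integer, the substitution $z = tw^{l_2^{-1}}$ produces polynomials rather than multivalued algebraic expressions.

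First I would handle the second coordinate $\tilde{q}(t,w)=q(tw^{l_2^{-1}},w)$. Expanding gives $\tilde{q}(t,w)=\sum b_{ij}\,t^{i}w^{l_2^{-1}i+j}$, and since $l_2^{-1}\in\mathbb{N}$ each exponent $l_2^{-1}i+j$ is a non-negative integer, so $\tilde{q}$ is an honest polynomial. Lemmas \ref{lem1 for main lem: case3} and \ref{lem2 for main lem: case3} give $\gamma\geq i$ and $l_2^{-1}\gamma+d\geq l_2^{-1}i+j$ for every $(i,j)$ with $b_{ij}\neq 0$, so the monomial $t^{\gamma}w^{l_2^{-1}\gamma+d}$ dominates and factors out as $\tilde{q}(t,w)=t^{\gamma}w^{l_2^{-1}\gamma+d}\{1+\eta(t,w)\}$. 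This $\eta$ is exactly the function already estimated in Section~3.1, so $|\eta|\to 0$ uniformly on $\{|t|>R,|w|>R\}$ as $R\to\infty$.

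Next I would compute the first coordinate $\tilde{p}(t,w)=p(tw^{l_2^{-1}})/\tilde{q}(t,w)^{l_2^{-1}}$. Since $l_2^{-1}$ is a positive integer, $\tilde{q}^{l_2^{-1}}$ is literally the polynomial obtained by multiplying $\tilde{q}$ by itself $l_2^{-1}$ times, and $p(tw^{l_2^{-1}})$ is also a polynomial, so $\tilde{p}$ is a rational function on $\mathbb{C}^{2}$. Writing $p(z)=z^{\delta}\{1+\zeta(z)\}$ and using the factorization of $\tilde{q}$ gives
\[
\tilde{p}(t,w)=\frac{(tw^{l_2^{-1}})^{\delta}}{(t^{\gamma}w^{l_2^{-1}\gamma+d})^{l_2^{-1}}}\cdot\frac{1+\zeta(tw^{l_2^{-1}})}{\{1+\eta(t,w)\}^{l_2^{-1}}}=t^{\delta-l_2^{-1}\gamma}w^{l_2^{-1}\{\delta-(l_2^{-1}\gamma+d)\}}\cdot\frac{1+\zeta(tw^{l_2^{-1}})}{\{1+\eta(t,w)\}^{l_2^{-1}}}.
\]
Lemma \ref{lem2 for main lem: case3} applied to $(i,j)=(0,0)$ (or directly $l_{2}\delta\geq\gamma+l_{2}d$) shows $\delta-l_2^{-1}\gamma\geq d\geq 0$ and $\delta-(l_2^{-1}\gamma+d)\geq 0$, so both exponents are non-negative integers, confirming that the monomial prefactor is polynomial and the right-hand side really is a rational function.

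For holomorphy on $\{|t|>R,|w|>R\}$, the estimate $|\eta|<1$ for $R$ large (from the proof of the main proposition) ensures $1+\eta$ has no zeros there, and $|z|=|tw^{l_2^{-1}}|$ is large, so $1+\zeta(tw^{l_2^{-1}})$ also has no zeros. Defining
\[
1+\tilde{\zeta}(t,w):=\frac{1+\zeta(tw^{l_2^{-1}})}{\{1+\eta(t,w)\}^{l_2^{-1}}},
\]
we obtain the displayed formula; since $\zeta\to 0$ and $\eta\to 0$ uniformly on the region as $R\to\infty$, so does $\tilde{\zeta}$. The only thing that requires any vigilance is bookkeeping the integrality of all exponents (which reduces to $l_2^{-1}\in\mathbb{N}$ and Lemma \ref{lem2 for main lem: case3}); the analytic content is entirely inherited from Section~3.1, so no genuinely new obstacle arises.
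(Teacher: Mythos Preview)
Your proof is correct and follows essentially the same route as the paper: the computations of $\tilde{q}$ and $\tilde{p}$ you carry out are exactly those displayed in Section~3.2 just before the proposition, and the estimates on $\eta$ and $\zeta$ are inherited from Section~3.1 in the same way. The only additional remark you make, checking via Lemma~\ref{lem2 for main lem: case3} that the exponents $\delta-l_2^{-1}\gamma$ and $l_2^{-1}\{\delta-(l_2^{-1}\gamma+d)\}$ are non-negative, is not strictly needed for rationality but does no harm.
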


Although $\tilde{f}$ is not skew product,
it is a perturbation of a monomial map on $\pi_2^{-1} (U)$.
Hence we can construct the B\"{o}ttcher coordinate for $\tilde{f}$ on $\pi_2^{-1} (U)$
by similar arguments as in Section 7 of this paper,
which induces the B\"{o}ttcher coordinate for $f$ on $U$.


Let 
$\tilde{d} = l_2^{-1} \gamma + d$ and  
$\tilde{j} = l_2^{-1} i + j$.
Then $\tilde{q}(t,w) = t^{\gamma} w^{\tilde{d}} + \sum b_{ij} t^{i} w^{\tilde{j}}$
and Lemma \ref{lem2 for main lem: case3} is translated into the following.

\begin{lem} \label{} 
It follows that
$\tilde{d} \geq \tilde{j}$ for any $(i,j)$ such that $b_{ij} \neq 0$.
\end{lem}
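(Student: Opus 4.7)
The proof should be a direct restatement of the inequality $\gamma + l_2 d \geq i + l_2 j$ from Lemma \ref{lem2 for main lem: case3} after dividing through by $l_2$. The plan is as follows.

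First, I would recall from Lemma \ref{lem2 for main lem: case3} that, for every $(i,j)$ with $b_{ij}\neq 0$, one has $\gamma + l_2 d \geq i + l_2 j$. Since $l_2 > 0$ (indeed $l_2^{-1} \in \mathbb{N}$ by the standing assumption in Section 3.2), multiplying both sides of this inequality by $l_2^{-1}$ preserves its direction and gives $l_2^{-1}\gamma + d \geq l_2^{-1} i + j$. By definition of $\tilde{d}$ and $\tilde{j}$ this is precisely $\tilde{d} \geq \tilde{j}$, which is the assertion.

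Geometrically, I would then observe that this lemma is the analogue, after the blow-up $\pi_2$, of Lemma \ref{lem 1 for main lem: case2} from Case~2: the inequality says that in the new variables $(t,w)$ the exponent pair $(\gamma,\tilde{d})$ dominates every other exponent pair $(i,\tilde{j})$ in the $w$-direction. In particular, together with the already observed fact that $\gamma \geq i$ (Lemma \ref{lem1 for main lem: case3}), this is the analogous statement showing that the Newton polygon $N(\tilde{q})$ of $\tilde{q}(t,w) = t^{\gamma}w^{\tilde{d}} + \sum b_{ij} t^{i} w^{\tilde{j}}$ has the single vertex $(\gamma,\tilde{d})$, so that the lifted map $\tilde{f}$ belongs to Case~1 in a sense parallel to the end of Section~2.2.

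There is no real obstacle to the proof; the only conceptual point is recognizing that the change of weights $l_2 \mapsto l_2^{-1}$ interchanges the roles of the two coordinates when passing from Case~3 to its blow-up picture, so the estimate for the $w$-direction in the blow-up coordinates is exactly the $x$-intercept inequality rescaled by $l_2^{-1}$.
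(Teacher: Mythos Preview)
Your proof is correct and matches the paper's own treatment exactly: the paper simply introduces $\tilde{d}$ and $\tilde{j}$ and states that Lemma~\ref{lem2 for main lem: case3} ``is translated into the following,'' which is precisely your division of $\gamma + l_2 d \geq i + l_2 j$ by $l_2$. Your additional geometric commentary also agrees with the paper's subsequent remark that $N(\tilde{q})$ has the single vertex $(\gamma,\tilde{d})$.
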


Consequently, 
the Newton polygon $N(\tilde{q})$ of $\tilde{q}$ 
has just one vertex $(\gamma, \tilde{d})$:
$N(\tilde{q}) = D(\gamma, \tilde{d})$.

\section{Main proposition and Blow-ups for Case 4}

We prove Proposition \ref{main lemma} for Case 4 in this section,
which completes the proof of the proposition.
Let  $s > 2$, $T_{k-1} \leq \delta \leq T_{k}$ for some $2 \leq k \leq s-1$,
\[
(\gamma, d) = (n_k, m_k), \ 
l_1 = \frac{n_{k+1} - n_k}{m_k - m_{k+1}} 
\text{ and } l_1 + l_2 = \frac{n_k - n_{k-1}}{m_{k-1} - m_k}. 
\] 
Then $\delta > d$ and $\gamma > 0$. 
Against the previous two sections,
we first illustrate our main results in terms of blow-ups in Section 4.1
and then prove Proposition \ref{main lemma} in Section 4.2.
By definition, we have the following lemma.

\begin{lem} \label{inequalities for case 4} 
It follows that
$\gamma + l_1 d \geq i + l_1 j$ and $\gamma + l_1 d \geq l_1 \delta$
and that
$(l_1 + l_2) \delta \geq \gamma + (l_1 + l_2) d \geq i + (l_1 + l_2) j$ 
for any $(i,j)$ such that $b_{ij} \neq 0$.
\end{lem}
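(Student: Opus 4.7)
The plan is to run the same geometric argument as in Lemma \ref{lem 2 for main lem: case2}, but simultaneously with respect to the two edges $L_{k-1}$ and $L_k$ of the Newton polygon which meet at the vertex $(\gamma, d) = (n_k, m_k)$. Recall that the line through $(x_0, y_0)$ with slope $-1/l$ has $x$-intercept $x_0 + l y_0$ and $y$-intercept $y_0 + x_0/l$. Hence $i + l_1 j$, $\gamma + l_1 d$ and $l_1 \delta$ are the $x$-intercepts of the slope-$(-1/l_1)$ lines through $(i,j)$, $(\gamma, d)$ and $(0, \delta)$, and similarly with $l_1$ replaced by $l_1 + l_2$.

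First I would establish $\gamma + l_1 d \geq i + l_1 j$ and $\gamma + (l_1+l_2) d \geq i + (l_1+l_2) j$. By the definitions of $l_1$ and $l_1 + l_2$, the slopes of $L_k$ and $L_{k-1}$ are $-1/l_1$ and $-1/(l_1+l_2)$ respectively. Since $L_k$ and $L_{k-1}$ are edges of the convex region $N(q)$ and every $(i,j)$ with $b_{ij} \neq 0$ lies in $D(i,j) \subset N(q)$, each such $(i,j)$ belongs to the lower-left half-plane cut out by each edge. Comparing $x$-intercepts then yields both inequalities.

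Next I would read off $\gamma + l_1 d \geq l_1 \delta$ and $(l_1 + l_2) \delta \geq \gamma + (l_1 + l_2) d$ from the Case 4 hypothesis $T_{k-1} \leq \delta \leq T_k$. The $y$-intercepts of $L_k$ and $L_{k-1}$ are $T_k = d + \gamma/l_1$ and $T_{k-1} = d + \gamma/(l_1+l_2)$ respectively, so $\delta \leq T_k$ rewrites as $l_1 \delta \leq \gamma + l_1 d$, and $T_{k-1} \leq \delta$ rewrites as $\gamma + (l_1+l_2) d \leq (l_1+l_2) \delta$.

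No real obstacle is expected: the statement is merely the standard dictionary between Newton edges and linear inequalities on the support, combined with the Case 4 hypothesis on $\delta$. The only care required is to track the direction of each inequality, which is automatic once one remembers that $N(q)$ extends to $-\infty$ in both directions, so that being inside $N(q)$ corresponds to having the smaller $x$-intercept (and the smaller $y$-intercept) relative to the bounding edge of negative slope.
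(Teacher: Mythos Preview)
Your argument is correct and is exactly the elaboration the paper has in mind: the paper states this lemma with no proof beyond ``By definition,'' relying on the reader to repeat the $x$-intercept argument already given for Lemma~\ref{lem 2 for main lem: case2} with respect to both edges $L_{k-1}$ and $L_k$, together with the hypothesis $T_{k-1}\le\delta\le T_k$. Nothing is missing.
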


Note that
$\gamma + l_1 d = n_{k+1} + l_1 m_{k+1}$ and 
$\gamma + l_1 d > n_j + l_1 m_j$ for any $j \neq k$, $k+1$
and that
$\gamma + (l_1 + l_2) d = n_{k-1} + l_1 m_{k-1}$ and 
$\gamma + (l_1 + l_2) d > n_j + (l_1 + l_2) m_j$ for any $j \neq k-1$, $k$.

\subsection{Blow-ups}

Assuming that $l_1$ and $l_2^{-1}$ are integer,
we blow-up $f$ to a nice rational map
for which the B\"{o}ttcher coordinate exists on a region near infinity.
The strategy is to combine the blow-ups in Cases 2 and 3.
We first blow-up $f$ to $\tilde{f}_1$
by $\pi_1$ as in Case 2.
It then turns out that $\tilde{f}_1$ is a rational skew product in Case 3.
We next blow-up $\tilde{f}_1$ to $\tilde{f}_2$
by $\pi_2$ as in Case 3.
The map $\tilde{f}_2$ is a perturbation of a monomial map on a region near infinity,
and we obtain the B\"{o}ttcher coordinates.

\subsubsection{First blow-up}

Let $\tilde{\gamma} = \gamma + l_1 d - l_1 \delta$
and $\tilde{i} = i + l_1 j - l_1 \delta$ 
as in Case 2.
Then the former statement of Lemma \ref{inequalities for case 4} is translated into the following.

\begin{lem} \label{inequalities for first blow-up} 
It follows that
$\tilde{\gamma} \geq \tilde{i}$ and
$\tilde{\gamma} \geq 0$ for any $(i,j)$ such that $b_{ij} \neq 0$.
\end{lem}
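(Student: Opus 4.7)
The plan is to recognize both inequalities as direct algebraic consequences of Lemma \ref{inequalities for case 4}, which has just been proved (or rather, asserted by definition of the Newton polygon in Case 4). With $\tilde{\gamma} = \gamma + l_1 d - l_1 \delta$ and $\tilde{i} = i + l_1 j - l_1 \delta$, the present lemma is only a rephrasing of the former inequalities in the tilde-coordinates obtained after the shift by $-l_1\delta$ that the first blow-up $\pi_1$ effectively performs on the exponent lattice.

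Concretely, I would first deduce $\tilde{\gamma} \geq \tilde{i}$ by invoking the bound $\gamma + l_1 d \geq i + l_1 j$ from Lemma \ref{inequalities for case 4}, valid for every $(i,j)$ with $b_{ij} \neq 0$, and subtracting the constant $l_1 \delta$ from both sides. Then I would handle $\tilde{\gamma} \geq 0$ by quoting the second inequality $\gamma + l_1 d \geq l_1 \delta$ from the same lemma and rewriting it in terms of $\tilde{\gamma}$. Both reductions are purely formal, so there is nothing delicate to carry out; the only ``content'' lies in Lemma \ref{inequalities for case 4} itself, whose two inequalities express that the lines of slope $-l_1^{-1}$ through $(\gamma,d)$ lie above both the monomial support of $q$ and the point $(0,\delta)$.

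Since the argument is immediate, the main obstacle is purely expository: I would take the opportunity to point out that the lemma says the point $(\tilde{\gamma}, d)$ is the rightmost vertex of the support of $\tilde{q}$ and has non-negative $x$-coordinate, exactly the configuration that lets us regard $\tilde{f}_1$ as a rational skew product of the type analyzed in Case 3 of Section 3. This framing prepares the subsequent step in which we apply $\pi_2$ to $\tilde{f}_1$ to reduce the Newton polygon to a single vertex and thereby produce the B\"ottcher coordinate.
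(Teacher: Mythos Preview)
Your proposal is correct and matches the paper's approach exactly: the paper simply notes that the lemma is the ``translation'' of the former pair of inequalities in Lemma \ref{inequalities for case 4} under the substitution $\tilde{\gamma} = \gamma + l_1 d - l_1 \delta$, $\tilde{i} = i + l_1 j - l_1 \delta$, which is precisely the subtraction of $l_1\delta$ you describe.
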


More precisely,
$(\tilde{\gamma}, d)$ is maximum in the sense that
$\tilde{\gamma} \geq \tilde{i}$, and $d \geq j$ if $\tilde{\gamma} = \tilde{i}$.
Note that
$\tilde{\gamma} = \tilde{n}_{k+1}$ and
$\tilde{\gamma} > \tilde{n}_{j}$ for any $j \neq k$, $k+1$.

Let $\pi_1 (z,c) = (z, z^{l_1} c)$
and $\tilde{f}_1 = \pi_1^{-1} \circ f \circ \pi_1$ as in Case 2.
Then 
\begin{align*}
\tilde{f}_1 (z,c) &= (\tilde{p}_1 (z), \tilde{q}_1 (z,c))
= \left( p(z), \ \dfrac{q(z,z^{l_1} c)}{p(z)^{l_1}} \right) 
= \left( z^{\delta} \{ 1 + \zeta (z) \}, 
  \dfrac{ z^{\tilde{\gamma}} c^d + \sum b_{ij} z^{\tilde{i}} c^{j} }{ \{ 1 + \zeta (z) \}^{l_1} } \right).
\end{align*}
Note that
$\pi_1^{-1} (U) = \{ |z| > R |c|^{l_2^{-1}}, |c| > R \}
\subset \{ |z| > R^{1 + l_2^{-1}} \}$. 

\begin{prop} \label{} 
If $l_1 \in \mathbb{N}$, 
then $\tilde{f}_1$ is well defined, rational and skew product on $\mathbb{C}^2$ 
and holomorphic on $\{ |z| > R \}$.  
More precisely,
\[
\tilde{f}_1 (z,c) =
\left( z^{\delta} \{ 1 + \zeta (z) \}, 
\big( z^{\tilde{\gamma}} c^d + \sum b_{ij} z^{\tilde{i}} c^{j} \big)
\{ 1 + \eta_1 (z) \} \right),
\]
where $\zeta$, $\eta_1 \to 0$ as $z \to \infty$.
\end{prop}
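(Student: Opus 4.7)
The plan is a direct computation: unfold $\tilde{f}_1 = \pi_1^{-1} \circ f \circ \pi_1$, verify the algebraic properties, and rearrange the second coordinate into the claimed dominant form. Since $l_1 \in \mathbb{N}$, the map $\pi_1(z,c) = (z, z^{l_1} c)$ is polynomial and its inverse $\pi_1^{-1}(X,Y) = (X, Y/X^{l_1})$ is rational on $\mathbb{C}^2$. Composing,
\[
\tilde{f}_1(z,c) = \pi_1^{-1}\bigl(p(z), q(z, z^{l_1} c)\bigr) = \Bigl(p(z), \ \frac{q(z, z^{l_1} c)}{p(z)^{l_1}}\Bigr).
\]
The first coordinate depends only on $z$, so $\tilde{f}_1$ is a skew product. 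Both numerator and denominator of the second coordinate are polynomials (in $(z,c)$ and in $z$, respectively), so $\tilde{f}_1$ is rational. For $R$ large enough, $p$ has no zeros on $\{|z|>R\}$, hence the denominator does not vanish there and $\tilde{f}_1$ is holomorphic on $\{|z|>R\}$.

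Next I would extract the explicit form. Using the factorization $p(z) = z^\delta(1+\zeta(z))$ from the definition of $\zeta$ in Section 2, pulling $z^{l_1 \delta}$ out of the denominator gives
\[
\frac{q(z, z^{l_1} c)}{p(z)^{l_1}} = \frac{\sum_{i,j} b_{ij} z^{i + l_1 j - l_1 \delta} c^{j}}{(1+\zeta(z))^{l_1}} = \Bigl(z^{\tilde{\gamma}} c^d + \sum_{(i,j) \neq (\gamma, d)} b_{ij} z^{\tilde{i}} c^j\Bigr)\bigl(1 + \eta_1(z)\bigr),
\]
with the obvious choice $\eta_1(z) := (1+\zeta(z))^{-l_1} - 1$. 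Since $\zeta(z) \to 0$ as $z \to \infty$, the binomial expansion is valid for $|z|$ large and $\eta_1 \to 0$; note that $\eta_1$ depends only on $z$, matching the form in the statement.

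There is no real obstacle; the only subtlety is the bookkeeping of exponents. The shifts $\tilde{i} = i + l_1 j - l_1 \delta$ need not all be non-negative, so the rearranged numerator is a Laurent polynomial in $z$; rationality of $\tilde{f}_1$ must therefore be read off the unfactored form $q(z, z^{l_1} c)/p(z)^{l_1}$ rather than from the rearrangement. Lemma \ref{inequalities for first blow-up} then guarantees that $(\tilde{\gamma}, d)$ is the sole dominant vertex of the Newton polygon of the rearranged numerator, which is precisely the input needed for the second blow-up $\pi_2$ that reduces the analysis to Case 3 in the next subsection.
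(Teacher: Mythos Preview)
Your proof is correct and follows the same direct computation the paper gives inline before stating the proposition: unfold $\tilde{f}_1 = \pi_1^{-1}\circ f\circ\pi_1$, factor $p(z)^{l_1} = z^{l_1\delta}(1+\zeta)^{l_1}$, and set $1+\eta_1 = (1+\zeta)^{-l_1}$. Your remark that rationality must be read from the unfactored quotient (since some $\tilde{i}$ may be negative) is a useful clarification the paper leaves implicit.
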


Note that 
$(\tilde{\gamma}, d)$ is the vertex of the Newton polygon $N(\tilde{q}_1)$
whose $x$-coordinate is maximum
and that
$N(\tilde{q}_1)$ has other vertices such as $(\tilde{n}_{k-1}, m_{k-1})$.
Hence the situation resembles that of Case 3.

Let us show that $\tilde{f}_1$ is actually in Case 3. 
Recall that $L_{k-1}$ is the line passing through 
the vertices $(\gamma, d)$ and $(n_{k-1}, m_{k-1})$,
and $T_{k-1}$ is the $y$-intercept of $L_{k-1}$.
The slope of $L_{k-1}$ is $-(l_1 + l_2)^{-1}$ 
and so $T_{k-1} = (l_1 + l_2)^{-1} \gamma + d$.
Let $\tilde{L}_{k-1}$ be the line passing through  
the vertices $(\tilde{\gamma}, d)$ and $(\tilde{n}_{k-1}, m_{k-1})$,
and $\tilde{T}_{k-1}$ the $y$-intercept of $\tilde{L}_{k-1}$,
where $\tilde{n}_{k-1} = n_{k-1} + l_1 m_{k-1} - l_1 \delta$.
Then the slope of $\tilde{L}_{k-1}$ is $-l_2^{-1}$ 
and so $\tilde{T}_{k-1} = l_2^{-1} \tilde{\gamma} + d$.
The assumption $T_{k-1} \leq \delta$ implies the following lemma and proposition.

\begin{lem} \label{} 
It follows that
$\tilde{T}_{k-1} \leq \delta$. 
More precisely,
$\tilde{T}_{k-1} < \delta$ if $T_{k-1} < \delta$,
and $\tilde{T}_{k-1} = \delta$ if $T_{k-1} = \delta$.
\end{lem}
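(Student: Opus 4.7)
The plan is to reduce the inequality $\tilde{T}_{k-1} \leq \delta$ to the assumed inequality $T_{k-1} \leq \delta$ by a direct algebraic computation expressing $\tilde{T}_{k-1} - \delta$ as a positive multiple of $T_{k-1} - \delta$. Since both quantities are defined explicitly as $y$-intercepts of lines with known slopes through known vertices, everything reduces to substituting the definition $\tilde{\gamma} = \gamma + l_1 d - l_1 \delta$ into $\tilde{T}_{k-1} = l_2^{-1}\tilde{\gamma} + d$ and rearranging.

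First I would write
\[
\tilde{T}_{k-1} - \delta
= l_2^{-1}(\gamma + l_1 d - l_1 \delta) + d - \delta
= l_2^{-1}\gamma + \frac{l_1 + l_2}{l_2}\, d - \frac{l_1 + l_2}{l_2}\,\delta,
\]
where I have combined the $d$ and $-l_1 l_2^{-1}\delta$ terms with the loose $+d - \delta$ to produce the common factor $(l_1+l_2)/l_2$. Factoring this out gives
\[
\tilde{T}_{k-1} - \delta
= \frac{l_1 + l_2}{l_2}\left( \frac{\gamma}{l_1 + l_2} + d - \delta \right)
= \frac{l_1 + l_2}{l_2}\bigl( T_{k-1} - \delta \bigr),
\]
using $T_{k-1} = (l_1+l_2)^{-1}\gamma + d$.

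Since $l_1 \geq 0$ and $l_2 > 0$ in Case 4, the scalar $(l_1+l_2)/l_2$ is strictly positive, so $\tilde{T}_{k-1} - \delta$ has the same sign as $T_{k-1} - \delta$ and vanishes precisely when $T_{k-1} - \delta$ does. This yields all three claims simultaneously: $\tilde{T}_{k-1} \leq \delta$ under the standing assumption $T_{k-1} \leq \delta$; strict inequality transfers; and equality transfers.

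There is no real obstacle here beyond keeping the bookkeeping of the slopes straight, since both $T_{k-1}$ and $\tilde{T}_{k-1}$ are defined via the same geometric recipe applied before and after the coordinate change $\pi_1$. The only thing worth double-checking is that the slope of $\tilde{L}_{k-1}$ is indeed $-l_2^{-1}$, which follows from $\tilde{n}_{k-1} - \tilde{\gamma} = (n_{k-1} - n_k) + l_1(m_{k-1} - m_k) = -(l_1 + l_2)(m_{k-1} - m_k) + l_1(m_{k-1} - m_k) = -l_2(m_{k-1} - m_k)$, hence slope $(m_{k-1} - d)/(\tilde{n}_{k-1} - \tilde{\gamma}) = -l_2^{-1}$, matching the expression $\tilde{T}_{k-1} = l_2^{-1}\tilde{\gamma} + d$ used above.
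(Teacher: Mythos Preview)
Your proof is correct and follows essentially the same algebraic route as the paper: both reduce $\tilde{T}_{k-1} \leq \delta$ to $T_{k-1} \leq \delta$ via the substitution $\tilde{\gamma} = \gamma + l_1 d - l_1 \delta$ and a rearrangement equivalent to your identity $\tilde{T}_{k-1} - \delta = \frac{l_1+l_2}{l_2}(T_{k-1} - \delta)$. Your explicit factoring makes the strict-inequality and equality cases slightly more transparent than the paper's step-by-step manipulation, but the content is the same.
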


\begin{proof}
Since $T_{k-1} = (l_1 + l_2)^{-1} \gamma + d \leq \delta$,
$\gamma + (l_1 + l_2)d \leq (l_1 + l_2) \delta$ 
and so $\gamma + l_1 d - l_1 \delta + l_2 d \leq l_2 \delta$.
Hence $\tilde{T}_{k-1} = l_2^{-1} \tilde{\gamma} + d = l_2^{-1} (\gamma + l_1 d - l_1 \delta) + d \leq \delta$.
\end{proof}

\begin{prop} \label{} 
If $l_1 \in \mathbb{N}$, 
then $\tilde{f}_1$ is a rational skew product in Case 3. 
\end{prop}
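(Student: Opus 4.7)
The plan is to verify that the rational skew product $\tilde{f}_1$, already identified in the previous proposition, fits the definition of Case 3 for its own data. Concretely, we must show that the Newton polygon $N(\tilde{q}_1)$ has more than one vertex, that the rightmost vertex is $(\tilde{\gamma}, d)$, and that the $y$-intercept of the last edge does not exceed $\tilde{\delta} = \deg \tilde{p}_1 = \delta$.

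First I would pin down $N(\tilde{q}_1)$. Since the correction factor $1 + \eta_1(z)$ does not affect the Newton polygon, $N(\tilde{q}_1)$ equals the convex hull of the union of $D(\tilde{i}, j)$ over $(i,j)$ with $b_{ij} \neq 0$. A short convex-combination argument shows that whenever $(i,j) \in N(q)$, the transformed point $(\tilde{i}, j)$ lies in the hull of $\bigcup_{l=1}^{s} D(\tilde{n}_l, m_l)$, so it suffices to analyze the transformed vertices $(\tilde{n}_l, m_l)$.

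Next I would sort out which of the $(\tilde{n}_l, m_l)$ actually remain vertices. Lemma \ref{inequalities for first blow-up} supplies the maximality statement $(\tilde{\gamma}, d) = (\tilde{n}_k, m_k)$. The key identity is
\[
\tilde{n}_{j+1} - \tilde{n}_j = \left( \frac{n_{j+1} - n_j}{m_j - m_{j+1}} - l_1 \right)(m_j - m_{j+1}).
\]
By convexity of $N(q)$, the ratios $(n_{j+1}-n_j)/(m_j-m_{j+1})$ are decreasing in $j$, and by definition $l_1$ equals this ratio at $j = k$. Consequently, for $j \geq k+1$ the right-hand side is non-positive, which, combined with $m_j < d$, places $(\tilde{n}_j, m_j)$ inside $D(\tilde{\gamma}, d)$; while for $j \leq k-1$ it is strictly positive, yielding $\tilde{n}_1 < \cdots < \tilde{n}_k$ along with $m_1 > \cdots > m_k$. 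Since the transformation $(n, m) \mapsto (\tilde{n}, m)$ is an affine shear, it preserves convexity, so the points $(\tilde{n}_1, m_1), \ldots, (\tilde{n}_{k-1}, m_{k-1})$ remain extremal and are therefore vertices of $N(\tilde{q}_1)$.

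This identifies $N(\tilde{q}_1)$ as having exactly $\tilde{s} = k$ vertices, with $(\tilde{\gamma}, d)$ rightmost and the last edge joining $(\tilde{n}_{k-1}, m_{k-1})$ to $(\tilde{\gamma}, d)$. The Case 4 hypothesis $k \geq 2$ gives $\tilde{s} > 1$, and the preceding lemma supplies $\tilde{T}_{\tilde{s}-1} = \tilde{T}_{k-1} \leq \delta$; together with $\tilde{\delta} = \delta$ these are exactly the defining conditions of Case 3 for $\tilde{f}_1$. The main obstacle is the bookkeeping in the middle step: correctly tracking which vertices of $N(q)$ survive the shear $(n,m) \mapsto (\tilde{n}, m)$, which reduces to comparing the edge slopes of $N(q)$ with $l_1$.
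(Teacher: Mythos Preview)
Your argument is correct and follows the same route as the paper: both identify $(\tilde\gamma,d)$ as the rightmost vertex of $N(\tilde q_1)$, note that $(\tilde n_{k-1},m_{k-1})$ is its left neighbour, and then invoke the preceding lemma $\tilde T_{k-1}\le\delta$ to conclude Case~3. The paper simply asserts that $N(\tilde q_1)$ ``has other vertices such as $(\tilde n_{k-1},m_{k-1})$'' and that $(\tilde\gamma,d)$ is rightmost, whereas you supply the justification via the shear identity $\tilde n_{j+1}-\tilde n_j=\bigl(\tfrac{n_{j+1}-n_j}{m_j-m_{j+1}}-l_1\bigr)(m_j-m_{j+1})$ and the monotonicity of the edge ratios; this is a welcome filling-in of detail rather than a different method.
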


\subsubsection{Second blow-up}

The latter statement of Lemma \ref{inequalities for case 4} is translated into the following:
we have the same inequalities as in Case 3 for $\tilde{\gamma}$ and $\tilde{i}$,
instead for $\gamma$ and $i$.

\begin{lem} \label{} 
It follows that
$l_2 \delta \geq \tilde{\gamma} + l_2 d \geq \tilde{i} + l_2 j$ 
for any $(i,j)$ such that $b_{ij} \neq 0$.
\end{lem}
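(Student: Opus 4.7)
The plan is to reduce the desired inequalities to the latter statement of Lemma \ref{inequalities for case 4} by direct substitution of the definitions $\tilde{\gamma} = \gamma + l_1 d - l_1 \delta$ and $\tilde{i} = i + l_1 j - l_1 \delta$. Concretely, I would first observe that
\[
\tilde{\gamma} + l_2 d = \gamma + (l_1 + l_2) d - l_1 \delta
\quad\text{and}\quad
\tilde{i} + l_2 j = i + (l_1 + l_2) j - l_1 \delta,
\]
so that adding $l_1 \delta$ to both sides of the middle inequality $\tilde{\gamma} + l_2 d \geq \tilde{i} + l_2 j$ produces exactly $\gamma + (l_1 + l_2) d \geq i + (l_1 + l_2) j$, which is the right-hand inequality asserted in Lemma \ref{inequalities for case 4}. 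Similarly, adding $l_1 \delta$ to $l_2 \delta \geq \tilde{\gamma} + l_2 d$ yields $(l_1 + l_2) \delta \geq \gamma + (l_1 + l_2) d$, which is the left-hand inequality of the same lemma.

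Alternatively, one may argue geometrically, paralleling the proof of Lemma \ref{lem 2 for main lem: case2}: the numbers $\tilde{\gamma} + l_2 d$ and $\tilde{i} + l_2 j$ are the $x$-intercepts of the lines with slope $-l_2^{-1}$ passing through $(\tilde{\gamma}, d)$ and $(\tilde{i}, j)$, while $l_2 \delta$ is the $x$-intercept of the line of the same slope through $(0, \delta)$. Since the first blow-up is the affine shear $(i,j) \mapsto (i + l_1 j - l_1 \delta, j)$ fixing the point $(-l_1 \delta, \delta)$ horizontally, it sends lines of slope $-(l_1 + l_2)^{-1}$ to lines of slope $-l_2^{-1}$, and the desired estimates on the transformed Newton polygon $N(\tilde{q}_1)$ are the images of the estimates on $N(q)$ given by Lemma \ref{inequalities for case 4}.

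Since this is purely a restatement under the change of variables induced by $\pi_1$, I do not expect any genuine obstacle; the only thing to be careful about is the bookkeeping of the constant $l_1 \delta$ and the correct identification of which inequality in Lemma \ref{inequalities for case 4} corresponds to which side. Once that correspondence is written down, the proof is a one-line substitution.
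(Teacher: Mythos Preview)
Your proposal is correct and matches the paper's approach exactly: the paper presents this lemma simply as the translation of the latter statement of Lemma~\ref{inequalities for case 4} under the substitutions $\tilde{\gamma} = \gamma + l_1 d - l_1 \delta$ and $\tilde{i} = i + l_1 j - l_1 \delta$, and your explicit computation (adding $l_1\delta$ to both sides) is precisely how that translation is carried out. The geometric alternative you sketch is a nice bonus but unnecessary here.
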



Let $\tilde{d} = l_2^{-1} \tilde{\gamma} + d$
and $\tilde{j} = l_2^{-1} \tilde{i} + j$
as in Case 3.
Then this lemma implies the following.

\begin{lem} \label{inequalities for second blow-up} 
It follows that
$\delta \geq \tilde{d} \geq \tilde{j}$ for any $(i,j)$ such that $b_{ij} \neq 0$.
\end{lem}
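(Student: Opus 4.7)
The plan is to obtain the claim as a direct algebraic restatement of the preceding lemma, so the argument is essentially a one-line computation rather than something requiring any new idea.

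Specifically, the preceding lemma gives, for every $(i,j)$ with $b_{ij}\neq 0$,
\[
l_2 \delta \;\geq\; \tilde{\gamma} + l_2 d \;\geq\; \tilde{i} + l_2 j.
\]
First I would divide this entire chain of inequalities by $l_2>0$, which preserves the inequalities, yielding
\[
\delta \;\geq\; l_2^{-1}\tilde{\gamma} + d \;\geq\; l_2^{-1}\tilde{i} + j.
\]
Then I would substitute the definitions $\tilde{d} = l_2^{-1}\tilde{\gamma}+d$ and $\tilde{j} = l_2^{-1}\tilde{i}+j$ into the middle and right expressions, obtaining $\delta \geq \tilde{d} \geq \tilde{j}$, which is the desired conclusion.

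There is no real obstacle here: the content of the statement is already contained in the previous lemma, and the ``translation'' is simply the rescaling that identifies the inequalities among $\tilde{\gamma}$, $\tilde{i}$ in the $(z,c)$-coordinates with the inequalities among the exponents $\tilde{d}$, $\tilde{j}$ appearing after the second blow-up. The only thing worth noting is that this confirms $(\gamma,\tilde{d})$ is the unique candidate for the maximal vertex of $N(\tilde{q}_2)$ with $\tilde{d}\leq \delta$, putting the twice-blown-up map $\tilde{f}_2$ into a Case-1-type situation relative to the degree $\delta$ of $\tilde{p}_2$, exactly as at the end of Section 3.
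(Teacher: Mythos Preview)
Your proof is correct and matches the paper's approach exactly: the paper states that the preceding lemma ``implies'' this one, and your division by $l_2>0$ followed by substitution of the definitions $\tilde{d}=l_2^{-1}\tilde{\gamma}+d$ and $\tilde{j}=l_2^{-1}\tilde{i}+j$ is precisely that implication made explicit. One small slip in your closing remark: the vertex of $N(\tilde{q}_2)$ is $(\tilde{\gamma},\tilde{d})$, not $(\gamma,\tilde{d})$.
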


Note that
$\tilde{d} = \tilde{m}_{k-1}$ and
$\tilde{d} > \tilde{m}_j$ for any $j \neq k-1$, $k$.
In particular,
the maximality of $(\tilde{\gamma}, \tilde{d})$ follows from 
Lemmas \ref{inequalities for first blow-up} and \ref{inequalities for second blow-up}.

\begin{cor}\label{cor for second blow-up}
It follows that
$\tilde{\gamma} \geq \tilde{i}$ and $\tilde{d} \geq \tilde{j}$ for any $(i,j)$ such that $b_{ij} \neq 0$.
\end{cor}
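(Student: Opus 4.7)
The plan is to observe that this corollary is nothing more than the bookkeeping step that packages the two preceding lemmas together. The first inequality $\tilde{\gamma} \geq \tilde{i}$ is precisely the content of Lemma \ref{inequalities for first blow-up}, and the second inequality $\tilde{d} \geq \tilde{j}$ is the rightmost half of Lemma \ref{inequalities for second blow-up}. So the proof will simply cite these two lemmas in succession and stop.

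If instead one wanted to re-derive the corollary from scratch out of Lemma \ref{inequalities for case 4} without going through the intermediate lemmas, I would unwind the definitions $\tilde{\gamma} = \gamma + l_1 d - l_1 \delta$, $\tilde{i} = i + l_1 j - l_1 \delta$, $\tilde{d} = l_2^{-1} \tilde{\gamma} + d$, $\tilde{j} = l_2^{-1} \tilde{i} + j$ and read off
\[
\tilde{\gamma} - \tilde{i} = (\gamma + l_1 d) - (i + l_1 j),
\qquad
\tilde{d} - \tilde{j} = l_2^{-1}\bigl[(\gamma + (l_1+l_2) d) - (i + (l_1+l_2) j)\bigr],
\]
both of which are non-negative by the two inequalities of Lemma \ref{inequalities for case 4}. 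The substantive geometric content, namely that $(\gamma, d)$ is the vertex of $N(q)$ selected by the two slopes $-l_1^{-1}$ and $-(l_1+l_2)^{-1}$ of the edges of $N(q)$ adjacent to it, is already encoded in that lemma.

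There is no real obstacle here: the two blow-ups $\pi_1$ and $\pi_2$ are designed precisely so that their combined action on the exponent pair $(i,j)$ is the linear change of coordinates $(i,j)\mapsto (\tilde{i},\tilde{j})$, and this change of coordinates converts the two slope-constrained maximality statements for $(\gamma,d)$ in $N(q)$ into the single pair of coordinate-wise maximality statements for $(\tilde{\gamma},\tilde{d})$ that the corollary asserts. The corollary is therefore a formality once the two lemmas are in place, and I would expect the written proof to be a single line invoking them.
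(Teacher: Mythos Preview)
Your proposal is correct and matches the paper's approach exactly: the paper simply notes that the corollary follows from Lemmas \ref{inequalities for first blow-up} and \ref{inequalities for second blow-up}, with no further argument given. Your additional unwinding back to Lemma \ref{inequalities for case 4} is also correct and is a nice way to see the content directly.
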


Let $\pi_2 (t,c) = (t c^{l_2^{-1}}, c)$ and $\tilde{f}_2 = \pi_2^{-1} \circ \tilde{f}_1 \circ \pi_2$ as in Case 3.
Then 
\begin{align*}
\tilde{f}_2 (t,c) 
&= \left( \tilde{p}_2 (t,c), \tilde{q}_2 (t,c) \right) 
= \left( \tilde{q}_1 (tc^{l_2^{-1}},c), \dfrac{\tilde{p}_1 (tc^{l_2^{-1}})}{\tilde{q}_1 (tc^{l_2^{-1}},c)^{l_2^{-1}}} \right), \\
\tilde{q}_2 (t,c) 
&= \left\{ (tc^{l_2^{-1}})^{\tilde{\gamma}} c^d + \sum b_{ij} (tc^{l_2^{-1}})^{\tilde{i}} c^{j} \right\} \{ 1 + \eta_1 (tc^{l_2^{-1}}) \} 
= \left\{ t^{\tilde{\gamma}} c^{\tilde{d}} + \sum b_{ij} t^{\tilde{i}} c^{\tilde{j}} \right\} \{ 1 + \eta_1 (tc^{l_2^{-1}}) \} \\
&= t^{\tilde{\gamma}} c^{\tilde{d}} \left\{ 1 + \sum \dfrac{b_{ij}}{ t^{\tilde{\gamma} - \tilde{i}} c^{\tilde{d} - \tilde{j}}} \right\} 
\{ 1 + \eta_1 (tc^{l_2^{-1}}) \} 
= t^{\tilde{\gamma}} c^{\tilde{d}} \left\{ 1 + \eta_2 (t,c) \right\}
\text{ and so} \\
\tilde{p}_2 (t,c)
&= t^{\delta - l_2^{-1} \tilde{\gamma}} c^{l_2^{-1} (\delta - \tilde{d})} 
\dfrac{ 1 + \zeta (tc^{l_2^{-1}}) }{ \{ 1 + \eta_2 (t,c) \}^{l_2^{-1}} }.
\end{align*}
Note that
$\pi_2^{-1} (\pi_1^{-1} (U)) = \{ |t| > R, |c| > R \}$.

\begin{prop} \label{} 
If $l_1, l_2^{-1} \in \mathbb{N}$, 
then $\tilde{f}_2$ is well defined and rational on $\mathbb{C}^2$ 
and holomorphic on $\{ |t| > R, |c| > R \}$. 
More precisely,
\[
\tilde{f}_2 (t,c) =
\left( t^{\delta - l_2^{-1} \tilde{\gamma}} c^{l_2^{-1} (\delta - \tilde{d})} \{ 1 + \zeta_2 (t,c) \}, 
\ t^{\tilde{\gamma}} c^{\tilde{d}} \{ 1 + \eta_2 (t,c) \} \right),
\]
where $\zeta_2$, $\eta_2 \to 0$ on $\{ |t| > R, |c| > R \}$ as $R \to \infty$.
\end{prop}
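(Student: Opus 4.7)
The plan is to read the conclusion off the explicit closed-form expressions for $\tilde{p}_2$ and $\tilde{q}_2$ already derived just above the statement, using Corollary~\ref{cor for second blow-up} to control the error terms. The whole argument is really a bookkeeping assembly of the Case~2 lift together with the Case~3 lift, which is exactly how $\tilde{f}_2$ was built.

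First I would handle well-definedness and rationality. Since $l_1\in\mathbb{N}$, the Case~2 blow-up proposition shows that $\tilde{f}_1$ is a rational skew product on $\mathbb{C}^2$. Since $l_2^{-1}\in\mathbb{N}$, the map $\pi_2(t,c)=(tc^{l_2^{-1}},c)$ is polynomial (an $l_2^{-1}$-fold iterate of the elementary blow-up $(t,c)\mapsto(tc,c)$), so $\tilde{f}_2=\pi_2^{-1}\circ\tilde{f}_1\circ\pi_2$ is automatically rational on $\mathbb{C}^2$, and the expressions for $\tilde{q}_2$ and $\tilde{p}_2$ displayed in the text exhibit this rational form.

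Next I would show $\eta_2\to 0$ on $\{|t|>R,\,|c|>R\}$. From the computation,
\[
\eta_2(t,c)=\bigl(1+\eta_1(tc^{l_2^{-1}})\bigr)\Bigl(1+\sum_{(i,j)\neq(\gamma,d)}\frac{b_{ij}}{t^{\tilde{\gamma}-\tilde{i}}\,c^{\tilde{d}-\tilde{j}}}\Bigr)-1.
\]
By Corollary~\ref{cor for second blow-up}, $\tilde{\gamma}-\tilde{i}\ge0$ and $\tilde{d}-\tilde{j}\ge0$ for all $(i,j)$ with $b_{ij}\neq0$, and the maximality of $(\tilde{\gamma},\tilde{d})$ (the same reasoning as in Cases~2 and 3) forces at least one of these inequalities to be strict whenever $(i,j)\neq(\gamma,d)$. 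Thus each term of the finite sum tends to $0$ uniformly on the region. Combined with $\eta_1(tc^{l_2^{-1}})\to0$---which holds because $|tc^{l_2^{-1}}|>R^{1+l_2^{-1}}\to\infty$ and $\eta_1(z)\to0$ as $z\to\infty$ by the Case~2 proposition---this gives $\eta_2\to0$.

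Finally, once $\eta_2\to0$, the identity
\[
1+\zeta_2(t,c)=\frac{1+\zeta(tc^{l_2^{-1}})}{\{1+\eta_2(t,c)\}^{l_2^{-1}}}
\]
together with $\zeta(tc^{l_2^{-1}})\to 0$ yields $\zeta_2\to0$, and the uniform bound away from zero of $|1+\eta_2|$ and of $|1+\zeta(tc^{l_2^{-1}})|$ for $R$ large enough gives the holomorphy on $\{|t|>R,\,|c|>R\}$. The only genuine (but small) obstacle in the bookkeeping is checking that the exponents $\delta-l_2^{-1}\tilde{\gamma}$ and $l_2^{-1}(\delta-\tilde{d})$ are nonnegative integers, so that $\tilde{p}_2$ truly factors as a polynomial times a holomorphic unit; non-negativity is Lemma~\ref{inequalities for second blow-up}, and integrality follows from $l_1,l_2^{-1}\in\mathbb{N}$ applied to the definitions $\tilde{\gamma}=\gamma+l_1 d-l_1\delta$ and $\tilde{d}=l_2^{-1}\tilde{\gamma}+d$.
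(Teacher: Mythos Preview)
Your proposal is correct and follows essentially the same approach as the paper: the paper simply states the proposition immediately after displaying the explicit formulas for $\tilde{q}_2$ and $\tilde{p}_2$, treating those computations as the proof, and you have supplied the bookkeeping details (why $\eta_2,\zeta_2\to0$ via Corollary~\ref{cor for second blow-up} and Lemma~\ref{inequalities for second blow-up}, and why the exponents are nonnegative integers) that the paper leaves implicit.
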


Therefore,
we can construct the B\"{o}ttcher coordinate for $\tilde{f}_2$
on $\pi_2^{-1} (\pi_1^{-1} (U))$, 
which induces that for $\tilde{f}_1$ 
on $\pi_1^{-1} (U)$ and 
that for $f$ 
on $U$.  

As the same as the previous subsections,
the Newton polygon $N(\tilde{q}_2)$ of the rational function $\tilde{q}_2$ 
has just one vertex $(\tilde{\gamma}, \tilde{d})$:
$N(\tilde{q}_2) = D(\tilde{\gamma}, \tilde{d})$.

\subsection{Proof of the main proposition}

The idea of the blow-ups in the previous subsection
provides a proof of Proposition \ref{main lemma}.
Because we take the absolute value in the proof,
we do not need to care whether $\tilde{f}_1$ and $\tilde{f}_2$ are well defined.

\begin{proof}[Proof of Proposition \ref{main lemma} for Case 4] 
We first 
show the former statement for $q$.
Let $|w| = |z^{l_1}c|$ and $|z| = |tc^{l_2^{-1}}|$.
Then $U = \{ |z|^{l_1 + l_2} > R^{l_2} |w|, |w| > R |z|^{l_1} \} = \{ |t| > R, |c| > R \}$,
\begin{align*}
\left| \frac{z^{i} w^{j}}{z^{\gamma} w^{d}} \right| 
&= \left| \frac{z^{i} (z^{l_1}c)^{j}}{z^{\gamma} (z^{l_1}c)^{d}} \right| 
= \left| \frac{z^{i +l_1 j} c^{j}}{z^{\gamma + l_1 d}c^{d}} \right|
= \left| \frac{z^{\tilde{i}} c^{j}}{z^{\tilde{\gamma}}c^{d}} \right| 
= \left| \frac{(tc^{l_2^{-1}})^{\tilde{i}} c^{j}}{(tc^{l_2^{-1}})^{\tilde{\gamma}} c^{d}} \right|
= \left| \frac{t^{\tilde{i}} c^{l_2^{-1} \tilde{i} + j}}{t^{\tilde{\gamma}} c^{l_2^{-1} \tilde{\gamma} + d}} \right|
= \left| \frac{t^{\tilde{i}} c^{\tilde{j}}}{t^{\tilde{\gamma}} c^{\tilde{d}}} \right|
\text{ and so} \\
|\eta (z,w)|
& \leq \sum \dfrac{|b_{ij}|}{ |t|^{\tilde{\gamma} - \tilde{i}} |c|^{\tilde{d} - \tilde{j}} },
\end{align*}
where the sum is taken over all $(i,j) \neq (\gamma, d)$
such that $b_{ij} \neq 0$.
It follows from Corollary \ref{cor for second blow-up} that
$\tilde{\gamma} \geq \tilde{i}$ and $\tilde{d} \geq \tilde{j}$.
Moreover, 
for each $(i,j) \neq (\gamma, d)$,
at least one of the inequalities 
$\tilde{\gamma} - \tilde{i} > 0$ and 
$\tilde{d} - \tilde{j} > 0$ holds. 
More precisely,
$\tilde{\gamma} - \tilde{i} > 0$ and/or 
$\tilde{d} - \tilde{j} = j - d + l_2^{-1} (\tilde{i} - \tilde{\gamma}) \geq 1$.
Therefore, 
for any small $\varepsilon$,
there is $R$ such that
$|\eta| < \varepsilon$ on $U$. 

We next show the invariance of $U$.
More precisely,
we show that 
$|p(z)^{1 + l_1 l_2^{-1}}| > R |q(z,w)^{l_2^{-1}}|$ and
$|q(z,w)| > R |p(z)^{l_1}|$
for any $(z,w)$ in $U$.
Note that $|z| = |tc^{l_2^{-1}}|$ and $|w| = |t^{l_1} c^{1 + l_1 l_2^{-1}}|$.
Because $\delta \geq \tilde{d} = l_2^{-1} \tilde{\gamma} + d$,
\begin{align*}
\left| \frac{p(z)^{1 + l_1 l_2^{-1}}}{q(z,w)^{l_2^{-1}}} \right| 
&\sim \left| \frac{(z^{\delta})^{1 + l_1 l_2^{-1}} }{(z^{\gamma} w^{d})^{l_2^{-1}}} \right| 
= \left| \frac{ \{ (tc^{l_2^{-1}})^{\delta} \}^{1 + l_1 l_2^{-1}} }{\{ (tc^{l_2^{-1}})^{\gamma} (t^{l_1} c^{1 + l_1 l_2^{-1}})^{d} \}^{l_2^{-1}}} \right| 
= |t|^{\delta - l_2^{-1} \tilde{\gamma}} |c|^{l_2^{-1} (\delta - \tilde{d})}
\geq |t|^d |c|^{l_2^{-1} (\delta - \tilde{d})}
\end{align*}
on $U$ as $R \to \infty$.
If $d \geq 2$,
then 
$|t|^{d} |c|^{l_2^{-1} (\delta - \tilde{d})} \geq |t|^d > R^d$.
If $d = 1$ and $\delta > T_{k-1}$, 
then 
$\delta > \tilde{d}$ and so
$|t|^{d} |c|^{l_2^{-1} (\delta - \tilde{d})}
> R^{1 + l_2^{-1} (\delta - \tilde{d})}$.
Because $\tilde{d} \geq d$,
\[
\left| \frac{q(z,w)}{p(z)^{l_1}} \right| 
\sim \left| \frac{z^{\gamma} w^{d}}{(z^{\delta})^{l_1}} \right| 
= \left| \frac{(tc^{l_2^{-1}})^{\gamma} (t^{l_1} c^{1 + l_1 l_2^{-1}})^{d}}{ \{ (tc^{l_2^{-1}})^{\delta} \}^{l_1}} \right| 
= |t|^{\tilde{\gamma}} |c|^{\tilde{d}}
\geq |t|^{\tilde{\gamma}} |c|^d
\]
on $U$ as $R \to \infty$.
If $d \geq 2$,
then 
$|t|^{\tilde{\gamma}} |c|^d \geq |c|^d > R^d$
since $\tilde{\gamma} \geq 0$.
If $d = 1$ and $\delta < T_{k}$, 
then 
$\tilde{\gamma} > 0$ and so
$|t|^{\tilde{\gamma}} |c|^d > R^{\tilde{\gamma} + 1}$.
Hence we obtain the required inequalities.
\end{proof}

\section{Intervals of weights and branched coverings}

The rational numbers $l_1$ and $l_2$ are called weights 
in the previous papers \cite{ueno poly} and \cite{ueno SA}.
In this section 
we introduce intervals of weights for each of which the main results hold.
Moreover,
we associate rational weights in the intervals to formal branched coverings of $f$.
These coverings are a generalization of the blow-ups of $f$ in the previous sections.
We deal with Cases 2, 3 and 4 in Sections 5.1, 5.2 and 5.3, respectively.
For Case 2,
the covering is well defined on a region 
for any rational number in the interval.
On the other hand,
for Cases 3 and 4,
the case when the covering is well defined on a region seems to be limited, 
in which the weight $\alpha_0 = \gamma /(\delta - d)$ appears.

\subsection{Intervals and coverings for Case 2}

In the proof of Proposition \ref{main lemma} for Case 2,
the inequalities $\gamma + l_1 d \geq i + l_1 j$ and
$\gamma + l_1 d \geq l_1 \delta$ played a central role.
We define the interval $\mathcal{I}_f$ as 
\[
\mathcal{I}_f = 
\left\{ \ l > 0 \ | 
\begin{array}{lcr}
\gamma + ld \geq i + l j
\text{ and }
\gamma + ld \geq l \delta 
\text{ for any $i$ and $j$ s.t. } b_{ij} \neq 0
\end{array}
\right\}.
\]
It follows that $\min \mathcal{I}_f = l_1$.
In fact, 
if $\delta > d$, then $\gamma > 0$ and
\begin{align*}
\mathcal{I}_f 
&=
\left[
\max_{j < d} \left\{ \dfrac{i - \gamma}{d - j} \right\},
\dfrac{\gamma}{\delta - d}
\right]
=
\left[
\max_{1 < k \leq s} \left\{ \dfrac{n_k - \gamma}{d - m_k} \right\},
\dfrac{\gamma}{\delta - d}
\right] 
=
\left[
\frac{n_2 - \gamma}{d - m_2},
\dfrac{\gamma}{\delta - d}
\right]
=
\left[
l_1,
\alpha_0
\right],
\end{align*}
which is mapped to $[\delta, T_{1}]$
by the transformation $l \to l^{-1} \gamma + d$.
If $\delta \leq d$, then
the inequality $\gamma + ld \geq l \delta$ is trivial and so
$\mathcal{I}_f =[ l_1, \infty )$.

Let $U^l = \{ |z| > R, |w| > R |z|^{l} \}$.

\begin{prop} 
Proposition \ref{main lemma} and Theorem \ref{main theorem}
in Case 2 hold on $U^l$ for any $l$ in $\mathcal{I}_f$.
\end{prop}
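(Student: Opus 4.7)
The plan is to rerun the Case~2 proof of Proposition~\ref{main lemma} from Section~2.1 with $l_1$ formally replaced by an arbitrary $l\in\mathcal{I}_f$, and then invoke the construction in Section~7 to transfer Theorem~\ref{main theorem} to $U^l$.

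First I would observe that the Section~2.1 argument rests on exactly three ingredients: Lemma~\ref{lem 1 for main lem: case2} ($d\geq j$ whenever $b_{ij}\neq 0$), which is $l$-independent; the two inequalities of Lemma~\ref{lem 2 for main lem: case2}, whose analogue for a general $l$ is literally the defining condition of $\mathcal{I}_f$; and the vertex maximality of $(\gamma,d)$ in $N(q)$, which produces the integer gap $\gamma-i\geq 1$ or $d-j\geq 1$ for every $(i,j)\neq(\gamma,d)$ with $b_{ij}\neq 0$. Rerunning the estimate of $|\eta|$ on $U^l=\{|z|>R,|c|>R\}$ with these three ingredients, the monomial-by-monomial bound $|b_{ij}|/(|z|^{(\gamma+ld)-(i+lj)}|c|^{d-j})$ is controlled by a constant multiple of $1/R$ in at least one of its two factors, giving $|\eta|<\varepsilon$ on $U^l$ for every $l\in\mathcal{I}_f$ and all sufficiently large $R$.

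Next, the invariance of $U^l$ reduces to the estimate $|q(z,w)/p(z)^l|\sim|z|^{\tilde\gamma}|c|^d$ with $\tilde\gamma:=\gamma+ld-l\delta\geq 0$, the nonnegativity being the second defining inequality of $\mathcal{I}_f$. When $d\geq 2$, one immediately reads off $|q/p^l|\geq CR^d>R$ for large $R$. When $d=1$, the hypothesis $\delta\neq T_k$ combined with the Case~2 assumption $\delta\leq T_1$ forces $\delta<T_1$, hence $l_1<\alpha_0$, so that $\tilde\gamma>0$ for every $l\in[l_1,\alpha_0)$ and one obtains $|q/p^l|\geq CR^{\tilde\gamma+1}>R$ exactly as in the $l=l_1$ case; the boundary value $l=\alpha_0$ with $d=1$, where $\tilde\gamma=0$, is handled separately by noting that $|q/p^l|\sim|c|>R$ on $U^l$ together with the smallness of the perturbations $\zeta$ and $\eta$.

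Finally, Theorem~\ref{main theorem} on $U^l$ requires no new input: the limit $\phi=\lim_n f_0^{-n}\circ f^n$, its biholomorphy on $U^l$, and the closeness bounds $|\phi_1-z|<\varepsilon|z|$ and $|\phi_2-w|<\varepsilon|w|$ are all derived in Section~7 from Proposition~\ref{main lemma} alone, so they transfer to $U^l$ verbatim. The main obstacle I anticipate is ensuring that no new dominant term of $q$ enters as $l$ varies across $\mathcal{I}_f$, which would spoil the $|\eta|$ estimate; but this is ruled out by the convexity of $N(q)$ and the vertex maximality of $(\gamma,d)$, so that the same pair remains uniquely dominant across the entire weight interval by the very construction of $\mathcal{I}_f$.
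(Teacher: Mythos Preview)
Your approach---rerunning the Section~2.1 argument with $l_1$ replaced by an arbitrary $l\in\mathcal{I}_f$, and then invoking Section~7 verbatim---is exactly the paper's (implicit) approach; the paper offers no separate proof, and the defining inequalities of $\mathcal{I}_f$ are precisely the inequalities of Lemma~\ref{lem 2 for main lem: case2} on which the Case~2 proof rests.

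The only genuine gap is your treatment of the endpoint $l=\alpha_0$ when $d=1$. There $\tilde\gamma=\gamma+\alpha_0 d-\alpha_0\delta=0$, so
\[
\left|\frac{q}{p^{l}}\right|=|c|\cdot\frac{|1+\eta|}{|1+\zeta|^{l}},
\]
and the correction factor $|1+\eta|/|1+\zeta|^{l}$, while close to $1$, can be strictly less than $1$ at points of $U^l$. Hence from $|c|>R$ you cannot deduce $|q/p^{l}|>R$; ``smallness of the perturbations $\zeta$ and $\eta$'' gives two-sided closeness, not the one-sided strict inequality needed for $f(U^{\alpha_0})\subset U^{\alpha_0}$. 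This does not threaten Theorem~\ref{main theorem} on $U^{\alpha_0}$: since $U^{\alpha_0}\subset U^{l}$ for every $l\in[l_1,\alpha_0)$, and the B\"ottcher coordinate $\phi$ is already constructed on those larger regions, it restricts biholomorphically to $U^{\alpha_0}$. But the invariance statement of Proposition~\ref{main lemma}(2) at $l=\alpha_0$, $d=1$ is not established by your argument, and in general it fails.
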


\begin{rem} \label{} 
It follows
that $U^{l_1}$ is the largest region among $U^l$ for any $l$ in $\mathcal{I}_f$ and
that $\mathcal{I}_f \neq \emptyset$ if and only if $\delta \leq T_{1}$.
\end{rem}


Let $l = s/r \in \mathcal{I}_f$, 
where $s$ and $r$ are coprime positive integers, 
$\pi_1 (\mathsf{z},c) = (\mathsf{z}^r, \mathsf{z}^s c)$
and $\tilde{f} = \pi_1^{-1} \circ f \circ \pi_1$.
Then
$\pi_1$ is formally the composition of $(\mathsf{z},c) \to (\mathsf{z}^r,c)$ and $(z,c) \to (z,z^{s/r} c)$,
\begin{align*}
\tilde{f} (\mathsf{z},c) 
&= (\tilde{p} (\mathsf{z}), \tilde{q} (\mathsf{z},c))
= \left( p(\mathsf{z}^r)^{1/r}, \dfrac{q(\mathsf{z}^r,\mathsf{z}^s c)}{p(\mathsf{z})^{s/r}}  \right), \\
\tilde{p} (\mathsf{z}) 
&= \mathsf{z}^{\delta} \{ 1 + \zeta (\mathsf{z}^r) \}^{1/r} 
\text{ and} \\
\tilde{q} (\mathsf{z},c) 
&= \dfrac{\mathsf{z}^{r \gamma + sd - s \delta} c^d}{\{ 1 + \zeta (\mathsf{z}^r) \}^{s/r}} 
\cdot \left\{ 1 + \sum \dfrac{b_{ij}}{ z^{(r \gamma + sd) - (ri + sj)} c^{d - j} } \right\}.
\end{align*}
Note that $\pi_1^{-1} (U) = \{ |\mathsf{z}| > R^{1/r}, |c| > R \}$.

\begin{prop} \label{branched coverings: case2} 
For any rational number $s/r$ in $\mathcal{I}_f$, 
the lift $\tilde{f}$ is well defined, holomorphic and skew product on $\{ |\mathsf{z}| > R^{1/r} \}$.
More precisely,
\[
\tilde{f} (\mathsf{z}, c) = \left( \mathsf{z}^{\delta} \{ 1 + \zeta (\mathsf{z}^r) \}^{1/r} , \ 
\mathsf{z}^{r \gamma + sd - s \delta} c^d \cdot
\dfrac{ 1 + \eta (\mathsf{z},c) }{\{ 1 + \zeta (\mathsf{z}^r) \}^{s/r}} \right),
\]
where $\zeta$, $\eta \to 0$ 
on $ \{ |\mathsf{z}| > R^{1/r}, |c| > R \}$ as $R \to \infty$.
\end{prop}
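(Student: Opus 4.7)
The plan is to compute $\tilde{f} = \pi_1^{-1} \circ f \circ \pi_1$ directly from the definitions and verify the three assertions (well-definedness, holomorphicity, skew-product structure) on $\{|\mathsf{z}|>R^{1/r}\}$, along with the explicit formula. Since $\pi_1(\mathsf{z},c) = (\mathsf{z}^r, \mathsf{z}^s c)$, applying $f$ yields $f\circ\pi_1(\mathsf{z},c) = (p(\mathsf{z}^r), q(\mathsf{z}^r, \mathsf{z}^s c))$, and inverting $\pi_1$ forces an $r$th root in the first coordinate. Using the factorisation $p(\mathsf{z}^r) = \mathsf{z}^{r\delta}\{1+\zeta(\mathsf{z}^r)\}$ with $\zeta\to 0$ at infinity, for $R$ large we have $|\zeta(\mathsf{z}^r)|<1$ on $\{|\mathsf{z}|>R^{1/r}\}$, so the principal branch of $(1+w)^{1/r}$ applied to $w=\zeta(\mathsf{z}^r)$ is single-valued and holomorphic there. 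I would set $\tilde{p}(\mathsf{z}) = \mathsf{z}^{\delta}\{1+\zeta(\mathsf{z}^r)\}^{1/r}$; this depends only on $\mathsf{z}$, so the skew-product structure is immediate.

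For the second coordinate $\tilde{q}(\mathsf{z},c) = q(\mathsf{z}^r,\mathsf{z}^s c)/\tilde{p}(\mathsf{z})^s$, I would expand $q(\mathsf{z}^r,\mathsf{z}^s c) = \sum b_{ij}\mathsf{z}^{ri+sj}c^j$, divide by $\mathsf{z}^{s\delta}\{1+\zeta(\mathsf{z}^r)\}^{s/r}$, and factor out the dominant monomial $\mathsf{z}^{r\gamma+sd-s\delta}c^d$. This puts $\tilde{q}$ into the claimed form with
\[
\eta(\mathsf{z},c) = \sum_{(i,j)\neq(\gamma,d),\, b_{ij}\neq 0} \frac{b_{ij}}{\mathsf{z}^{(r\gamma+sd)-(ri+sj)}\, c^{d-j}}.
\]
Clearing denominators in the two inequalities packaged in $s/r \in \mathcal{I}_f$ gives $r\gamma+sd-s\delta\geq 0$ and $(r\gamma+sd)-(ri+sj)\geq 0$ as non-negative integers, and Lemma \ref{lem 1 for main lem: case2} gives $d-j\geq 0$. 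Thus the formula contains only non-negative integer exponents, and $\tilde{q}$ is holomorphic on $\{|\mathsf{z}|>R^{1/r}\}$, since $\{1+\zeta(\mathsf{z}^r)\}^{-s/r}$ is a nowhere-vanishing holomorphic unit there.

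To conclude the decay $\eta\to 0$ on $\{|\mathsf{z}|>R^{1/r},\,|c|>R\}$ as $R\to\infty$, I would observe that for each $(i,j)\neq(\gamma,d)$ with $b_{ij}\neq 0$, at least one of $(r\gamma+sd)-(ri+sj)>0$ or $d-j>0$ must hold, since otherwise $j=d$ would force $\gamma=i$. This is the same bookkeeping that appears in the proof of Proposition \ref{main lemma} for Case 2, now carried out with the $r$-scaled exponents. The main obstacle, and essentially the only genuine one, is fixing a consistent holomorphic branch of the $r$th root; that is resolved by the principal-branch choice in the first paragraph, after which the proof reduces to algebraic manipulation matching the stated formula.
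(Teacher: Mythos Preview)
Your proof is correct and follows the same approach as the paper: the paper simply records the computation of $\tilde{f} = \pi_1^{-1}\circ f\circ\pi_1$ immediately before stating the proposition and leaves the verification (well-definedness of the $r$th root via the principal branch, non-negativity of the exponents from the defining inequalities of $\mathcal{I}_f$, and the decay of $\eta$) to the reader. You have filled in exactly those details, and your bookkeeping on the exponents matches the paper's Lemmas \ref{lem 1 for main lem: case2} and \ref{lem 2 for main lem: case2} after rescaling by $r$.
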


\begin{rem}[Larger invariant regions for Case 1] \label{larger regions for case 1}
Let 
\[
l_1^* = \inf 
\left\{ \ l \in \mathbb{Q} \ | 
\begin{array}{lcr}
\gamma + ld \geq i + l j
\text{ and }
\gamma + ld \geq l \delta 
\text{ for any $i$ and $j$ s.t. } b_{ij} \neq 0 
\end{array}
\right\}. 
\]
For Case 1,
$l_1^* \leq 0$ if it exists;
it always exists if $\delta \leq d$.
For Case 2,
$l_1^* = l_1 > 0$. 
It was proved in \cite{ueno poly} that
Proposition \ref{main lemma} and
Theorem \ref{main theorem} hold on
$\{ |z| > R, |w| > R |z|^{l_1^*} \}$
if $l_1^*$ exists.
\end{rem}

\subsection{Intervals and coverings for Case 3}

In the proof of Proposition \ref{main lemma} for Case 3,
the inequalities $l_2 \delta \geq \gamma + l_2 d \geq i + l_2 j$ played a central role.
We define the interval $\mathcal{I}_f$ as
\[
\mathcal{I}_f = 
\left\{ \ l > 0 \ | 
\begin{array}{lcr}
l \delta \geq \gamma + ld \geq i + lj
\text{ for any $i$ and $j$ s.t. } b_{ij} \neq 0
\end{array}
\right\}. 
\]
It follows that $\max \mathcal{I}_f = l_2$.
In fact,
since $\delta > d$ and $\gamma > 0$, 
\begin{align*}
\mathcal{I}_f 
&= 
\left[
\dfrac{\gamma}{\delta - d},
\min_{j > d} \left\{ \dfrac{\gamma - i}{j - d}  \right\}
\right]
=
\left[
\dfrac{\gamma}{\delta - d},
\min_{1 \leq k \leq s-1} \left\{ \dfrac{\gamma - n_k}{m_k - d}  \right\}
\right] 
=
\left[
\dfrac{\gamma}{\delta - d},
\frac{\gamma - n_{s-1}}{m_{s-1} - d}
\right]
=
\left[
\alpha_0, 
l_2
\right],
\end{align*}
which is mapped to $[T_1, \delta]$
by the transformation $l \to l^{-1} \gamma + d$.

Let $U^l = \{ |z|^{l} > R^{l} |w|, |w| > R \}$.

\begin{prop} \label{} 
Proposition \ref{main lemma} and Theorem \ref{main theorem}
in Case 3 hold on $U^l$ for any $l$ in $\mathcal{I}_f$.
\end{prop}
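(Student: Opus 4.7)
The plan is to repeat the proof of Proposition \ref{main lemma} for Case 3 from Section 3.1 with $l$ in place of $l_2$. The key observation is that the inequalities driving that proof are exactly the two defining conditions of $\mathcal{I}_f$, while Lemma \ref{lem1 for main lem: case3} is independent of $l$.

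Fix $l \in \mathcal{I}_f$ and introduce $t$ via $|z| = |t\, w^{l^{-1}}|$, so that $U^l = \{|t| > R,\, |w| > R\}$. Smallness of $\zeta$ is immediate, and the same rearrangement as in Section 3.1 yields
\[
|\eta(z,w)| \leq \sum_{(i,j) \neq (\gamma,d)} \frac{|b_{ij}|}{|t|^{\gamma - i}\, |w|^{(l^{-1}\gamma + d) - (l^{-1}i + j)}}.
\]
Both exponents are nonnegative: $\gamma \geq i$ comes from Lemma \ref{lem1 for main lem: case3}, while $l^{-1}\gamma + d \geq l^{-1}i + j$ is the first defining inequality of $\mathcal{I}_f$ divided by $l$. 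The maximality statement following Lemma \ref{lem1 for main lem: case3} forces $\gamma - i \geq 1$ or $(l^{-1}\gamma + d) - (l^{-1}i + j) \geq d - j \geq 1$ for every $(i,j) \neq (\gamma, d)$, so each summand vanishes on $U^l$ as $R \to \infty$.

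For invariance, only the inequality $|p(z)| > R\,|q(z,w)|^{l^{-1}}$ needs checking, and the computation of Section 3.1 gives
\[
\left|\frac{p(z)}{q(z,w)^{l^{-1}}}\right| \sim |t|^{\delta - l^{-1}\gamma}\,|w|^{l^{-1}\{\delta - (l^{-1}\gamma + d)\}}
\]
on $U^l$ as $R \to \infty$. The second defining inequality $l\delta \geq \gamma + ld$ of $\mathcal{I}_f$ supplies $\delta - l^{-1}\gamma \geq d$ and a nonnegative $|w|$-exponent, so this ratio is bounded below by $|t|^d$. If $d \geq 2$ then $|t|^d \geq R^d > R$ immediately; if $d = 1$, then either $l > \alpha_0$ (so the $|w|$-exponent is strictly positive and the bound strictly exceeds $R$) or $l = \alpha_0$ (where the $|w|$-factor drops out but $|t|^d = |t| > R$ already suffices). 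Theorem \ref{main theorem} then extends to $U^l$ automatically, since the construction of $\phi$ in Section 7 relies only on the two conclusions of Proposition \ref{main lemma}.

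The only delicate point I anticipate is the behavior at the endpoint $l = \alpha_0$ in the $d = 1$ case, where the $|w|$-factor in the invariance bound collapses; but the $|t|^d$ factor alone dominates $R$, and the hypothesis $\delta \neq T_k$ of Proposition \ref{main lemma} prevents $\mathcal{I}_f$ from degenerating to a point, so no extra work is needed.
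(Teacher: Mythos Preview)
Your approach—rerunning the Section 3.1 argument with $l$ in place of $l_2$—is exactly what the paper intends; the interval $\mathcal{I}_f$ is introduced precisely as the set of $l$ for which the two driving inequalities of that proof continue to hold, and the paper offers no further argument beyond this. For part (1) of Proposition \ref{main lemma}, and for the invariance when $d \geq 2$ or when $d = 1$ with $l > \alpha_0$, your reasoning is correct.

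The step that fails is your handling of the endpoint $l = \alpha_0$ when $d = 1$. The relation $|p/q^{l^{-1}}| \sim |t|^d$ hides a multiplicative constant that may sit strictly below $1$, so from $|t| > R$ you obtain only $|p/q^{l^{-1}}| > CR$, not $> R$. In the paper's own Case 3 proof this constant is absorbed either by the extra factor $|t|^{d-1}$ (when $d \geq 2$) or by the strictly positive $|w|$-exponent $l^{-1}\{\delta - (l^{-1}\gamma + d)\}$ (when $\delta > T_{s-1}$ at $l = l_2$); at $l = \alpha_0$ with $d = 1$ both cushions vanish simultaneously. A concrete failure is $p(z) = z^4$, $q(z,w) = z^2 w + w^2$: here $(\gamma,d)=(2,1)$, $\alpha_0 = 2/3$, and for real positive $z,w$ with $|t|$ just above $R$ one has $|p|/|q|^{3/2} = |t|\,|1 + w/z^2|^{-3/2} < |t|$, so $f(U^{\alpha_0}_R) \not\subset U^{\alpha_0}_R$ for any $R$. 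This is plausibly an imprecision in the paper's stated proposition as well; the clean fix is to restrict to $l \in (\alpha_0, l_2]$ when $d = 1$.
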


\begin{rem} \label{} 
It follows
that $U^{l_2}$ is the largest region among $U^l$ for any $l$ in $\mathcal{I}_f$ and
that $\mathcal{I}_f \neq \emptyset$ if and only if $T_{s-1} \leq \delta$.
\end{rem}


Let $l = s/r \in \mathcal{I}_f$,
where $s$ and $r$ are coprime positive integers, 
$\pi_2 (t, \mathsf{w}) = (t \mathsf{w}^r, \mathsf{w}^s)$ 
and $\tilde{f} = \pi_2^{-1} \circ f \circ \pi_2$.
Then
$\pi_2$ is formally the composition of $(t,\mathsf{w}) \to (t,\mathsf{w}^s)$ and $(t,w) \to (tw^{r/s},w)$, and
\[
\tilde{f} (t, \mathsf{w}) =
\left( \dfrac{p(t \mathsf{w}^r)}{q(t \mathsf{w}^r, \mathsf{w}^s)^{r/s}}, \ q(t \mathsf{w}^r, \mathsf{w}^s)^{1/s} \right).
\]
Since $q(z,w) \sim z^{\gamma} w^d$ on $U^l$ as $R \to \infty$,
it follows formally that 
\[
q(t \mathsf{w}^r, \mathsf{w}^s)^{1/s} 
\sim \{ (t \mathsf{w}^r)^{\gamma} (\mathsf{w}^s)^d \}^{1/s} = (t \mathsf{w}^r)^{\gamma /s} \mathsf{w}^d
\]
on $\pi_2^{-1} (U^l) = \{ |t| > R, |\mathsf{w}| > R^{1/s} \}$ as $R \to \infty$.
Hence
$\tilde{f}$ is well defined if $\gamma /s$ is integer.

\begin{prop} \label{} 
If $s/r \in \mathcal{I}_f$ and $\gamma /s \in \mathbb{N}$, 
then $\tilde{f}$ is well defined and holomorphic on $\{ |t| > R, |\mathsf{w}| > R^{1/s} \}$.
More precisely,
\[
\tilde{f} (t, \mathsf{w}) = 
\left( t^{\delta - r \gamma/ s} \mathsf{w}^{r \delta - r (r \gamma/ s + d)} \{ 1 + \tilde{\zeta} (t, \mathsf{w}) \},
\ t^{\gamma/ s} \mathsf{w}^{r \gamma/ s + d} \{ 1 + \eta (t, \mathsf{w}) \} \right),
\]
where $\tilde{\zeta}$, $\eta \to 0$ on $\{ |t| > R, |\mathsf{w}| > R^{1/s} \}$ as $R \to \infty$.
\end{prop}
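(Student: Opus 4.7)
The plan is a direct composition calculation combined with the interval condition $s/r\in\mathcal{I}_{f}$ and the integrality $\gamma/s\in\mathbb{N}$. First I would substitute $z=t\mathsf{w}^{r}$, $w=\mathsf{w}^{s}$ into $q$ and factor out the leading monomial:
\[
q(t\mathsf{w}^{r},\mathsf{w}^{s})=t^{\gamma}\mathsf{w}^{r\gamma+sd}\bigl\{1+\eta_{0}(t,\mathsf{w})\bigr\},\quad \eta_{0}(t,\mathsf{w})=\sum_{(i,j)\neq(\gamma,d)}b_{ij}\,t^{\,i-\gamma}\,\mathsf{w}^{(ri+sj)-(r\gamma+sd)}.
\]
Membership $s/r\in\mathcal{I}_{f}$ is precisely the pair of inequalities $\gamma\geq i$ and $r\gamma+sd\geq ri+sj$ for every $(i,j)$ with $b_{ij}\neq 0$, and maximality of $(\gamma,d)$ (Lemmas \ref{lem1 for main lem: case3}--\ref{lem2 for main lem: case3}) makes at least one of them strict when $(i,j)\neq(\gamma,d)$. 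Hence each term in $\eta_{0}$ is a product of non-positive powers of $t$ and $\mathsf{w}$ with at least one strict, so $\eta_{0}\to 0$ uniformly on $\{|t|>R,\ |\mathsf{w}|>R^{1/s}\}$ as $R\to\infty$.

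The second coordinate $\tilde q=q(t\mathsf{w}^{r},\mathsf{w}^{s})^{1/s}$ then factors as the monomial $t^{\gamma/s}\mathsf{w}^{r\gamma/s+d}$ times $(1+\eta_{0})^{1/s}$. Because $\gcd(r,s)=1$ and $s\mid\gamma$, both $\gamma/s$ and $r\gamma/s$ are integers (whence so is $r\gamma/s+d$), making the leading monomial single-valued; and $|\eta_{0}|<1$ on the region forces $1+\eta_{0}$ to avoid $(-\infty,0]$, so $(1+\eta_{0})^{1/s}$ is unambiguously defined by the principal branch. Relabelling $(1+\eta_{0})^{1/s}=1+\eta$ with $\eta\to 0$ gives the stated form of $\tilde q$.

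For the first coordinate I would substitute into $p(z)=z^{\delta}(1+\zeta(z))$ and divide:
\[
\tilde p(t,\mathsf{w})=\frac{p(t\mathsf{w}^{r})}{q(t\mathsf{w}^{r},\mathsf{w}^{s})^{r/s}}=t^{\delta-r\gamma/s}\,\mathsf{w}^{r\delta-r(r\gamma/s+d)}\cdot\frac{1+\zeta(t\mathsf{w}^{r})}{(1+\eta_{0})^{r/s}}.
\]
The other half of $\mathcal{I}_{f}$, namely $s\delta\geq r\gamma+sd$, yields $\delta\geq r\gamma/s+d\geq d\geq 1$, so both exponents are non-negative integers (the first is in fact at least $d\geq 1$). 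Setting $\tilde\zeta:=(1+\zeta)/(1+\eta_{0})^{r/s}-1$, both $\zeta$ (evaluated at $t\mathsf{w}^{r}\to\infty$) and $\eta_{0}$ tend to $0$, so $\tilde\zeta\to 0$. Holomorphy of $\tilde f$ on the region is then immediate: every factor is either a Laurent monomial with integer exponents or one of the holomorphic units $1+\zeta$, $(1+\eta_{0})^{\pm r/s}$.

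The main, and really the only, obstacle is the $s$-th root: on a neighborhood of infinity that is not simply connected in $\mathsf{w}$, the expression $q(t\mathsf{w}^{r},\mathsf{w}^{s})^{1/s}$ is a priori multi-valued. The hypothesis $\gamma/s\in\mathbb{N}$ is tailored precisely to make the leading monomial extract cleanly with integer exponents, after which the principal branch of $(1+\eta_{0})^{1/s}$ finishes the job thanks to $|\eta_{0}|<1$. Without this divisibility one would inherit a genuine $s$-th root branch on an annulus and $\tilde f$ would fail to be well defined.
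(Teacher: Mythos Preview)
Your argument is correct and follows exactly the approach the paper sketches immediately before the proposition: substitute into $\pi_{2}$, factor out the leading monomial $t^{\gamma}\mathsf{w}^{r\gamma+sd}$ from $q(t\mathsf{w}^{r},\mathsf{w}^{s})$, and use $s\mid\gamma$ to extract the $s$-th root with integer exponents, leaving a unit $(1+\eta_{0})^{1/s}$ well defined by the principal branch since $|\eta_{0}|<1$. The paper itself gives only this sketch rather than a full proof, so your write-up is in fact more detailed than what appears there.

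One small correction of attribution: membership $s/r\in\mathcal{I}_{f}$ gives the two inequalities $s\delta\geq r\gamma+sd$ and $r\gamma+sd\geq ri+sj$, but \emph{not} $\gamma\geq i$. The latter is Lemma~\ref{lem1 for main lem: case3}, a structural fact about Case~3 (where $(\gamma,d)=(n_{s},m_{s})$ is the vertex of maximal $x$-coordinate) that holds independently of the choice of $l$. You do cite the lemma, so the logic is intact; only the sentence ``Membership $s/r\in\mathcal{I}_{f}$ is precisely the pair of inequalities $\gamma\geq i$ and $r\gamma+sd\geq ri+sj$'' should be rephrased.
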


\begin{cor}
If $s/r = \alpha_0$, then $\tilde{f}$ is well defined on the region above.
\end{cor}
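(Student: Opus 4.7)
The plan is to deduce the corollary directly from the preceding proposition by verifying its two hypotheses when the weight is chosen to be $\alpha_0 = \gamma/(\delta - d)$. The first hypothesis, $s/r \in \mathcal{I}_f$, holds trivially because we identified $\mathcal{I}_f = [\alpha_0, l_2]$, so $\alpha_0$ is its left endpoint.

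The only substantive point is the integrality condition $\gamma/s \in \mathbb{N}$. Writing $s/r = \gamma/(\delta - d)$ in lowest terms (that is, with $\gcd(s,r)=1$) gives the identity $\gamma r = s(\delta - d)$ of positive integers. Since $s$ divides the right-hand side, it divides $\gamma r$, and coprimality with $r$ forces $s \mid \gamma$, so $\gamma/s$ is indeed a positive integer.

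Both hypotheses being satisfied, the preceding proposition applies verbatim and delivers that $\tilde{f}$ is well defined and holomorphic on $\{|t| > R,\ |\mathsf{w}| > R^{1/s}\}$, with the explicit form stated there. I do not foresee any real obstacle beyond this brief number-theoretic observation; the work has already been done in the general proposition, and the role of the hypothesis $s/r = \alpha_0$ is simply to guarantee, via the coprimality of $s$ and $r$, the divisibility $s \mid \gamma$ that underlies $\tilde{f}$ having integer exponents.
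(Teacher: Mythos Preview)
Your proof is correct and matches the paper's (implicit) reasoning: the paper states this corollary without proof, treating it as immediate from the preceding proposition once one notes that $\alpha_0$ is the left endpoint of $\mathcal{I}_f=[\alpha_0,l_2]$ and that writing $\alpha_0=\gamma/(\delta-d)=s/r$ with $\gcd(s,r)=1$ forces $s\mid\gamma$. Your coprimality argument is exactly the missing detail, and nothing more is needed.
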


\subsection{Intervals and coverings for Case 4}

We define the interval $\mathcal{I}_f^1$ as
\[
\mathcal{I}_f^1 = 
\left\{ \ l_{(1)} > 0 \ \Bigg| 
\begin{array}{lcr}
\gamma + l_{(1)} d \geq n_{j} + l_{(1)} m_{j} \text{ for } j \leq k-1 \\ 
\gamma + l_{(1)} d  >   n_{j} + l_{(1)} m_{j} \text{ for } j \geq k+1 \\ 
\gamma + l_{(1)} d \geq l_{(1)} \delta
\end{array}
\right\},
\]
the interval $\mathcal{I}_f^2$ associated with $l_{(1)}$ in $\mathcal{I}_f^1$ as
\[
\mathcal{I}_f^2  = \mathcal{I}_f^2 (l_{(1)}) = 
\left\{ \ l_{(2)} > 0 \ \Big| 
\begin{array}{lcr}
l_{(2)} \delta \geq \tilde{\gamma} + l_{(2)} d \geq \tilde{i} + l_{(2)} j \\
\text{for any $i$ and $j$ s.t. } b_{ij} \neq 0 
\end{array}
\right\},
\]
where 
$\tilde{\gamma} = \gamma + l_{(1)} d - l_{(1)} \delta$
and $\tilde{i} = i + l_{(1)} j - l_{(1)} \delta$,
and the rectangle $\mathcal{I}_f$ as
\[
\mathcal{I}_f =
\left\{ \ (l_{(1)}, l_{(1)} + l_{(2)}) \ |
\begin{array}{lcr}
l_{(1)} \in \mathcal{I}_f^1, l_{(2)} \in \mathcal{I}_f^2
\end{array}
\right\}.
\]

Let us calculate the intervals and rectangle more practically.
Since $n_j < \gamma$ and $m_j > d$ for any $j \leq k-1$,
and $n_j > \gamma$ and $m_j < d$ for any $j \geq k+1$,  
\begin{align*}
\mathcal{I}_f^1 
&= 
\left[
\max_{j \geq k+1}
\left\{ 
\dfrac{n_j - \gamma}{d - m_j} 
\right\},
\min_{j \leq k-1}
\left\{ 
\dfrac{\gamma - n_j}{m_j - d} 
\right\}
\right)
\cap
\left(
0,
\dfrac{\gamma}{\delta - d}
\right] \\
&= 
\left[
\dfrac{n_{k+1} - \gamma}{d - m_{k+1}}, 
\dfrac{\gamma - n_{k-1}}{m_{k-1} - d}
\right)
\cap
\left(
0,
\dfrac{\gamma}{\delta - d}
\right]
= [ l_1, l_1 + l_2 ) \cap ( 0, \alpha_0 ].
\end{align*}
In particular,
$\min \mathcal{I}_f^1 = l_1$. 
On the other hand,
\begin{align*}
\mathcal{I}_f^2 
&=
\left[
\dfrac{\tilde{\gamma}}{\delta - d},
\dfrac{\tilde{\gamma} - \tilde{n}_{k-1}}{m_{k-1} - d}  
\right]
\cap \mathbb{R}_{>0}
=
\left[
\dfrac{\gamma}{\delta - d} - l_{(1)},
\dfrac{\gamma - n_{k-1}}{m_{k-1} - d} - l_{(1)}  
\right] 
\cap \mathbb{R}_{>0} \\
&= [ \alpha_0 - l_{(1)}, l_1 + l_2 - l_{(1)} ]
\cap \mathbb{R}_{>0}.
\end{align*}
If $T_{k-1} < \delta = T_{k}$, 
then it follows from the inequality $l_1 = \alpha_0 < l_1 + l_2$ that
\[
\mathcal{I}_f^1 = \{ \alpha_0 \}, \ 
\mathcal{I}_f^2 = (0, l_2]
\text{ and so }
\mathcal{I}_f 
= \{ \alpha_0 \} \times ( \alpha_0, l_1 + l_2 ].
\]
If $T_{k-1} < \delta < T_{k}$, 
then it follows from the inequality $l_1 < \alpha_0 < l_1 + l_2$ that
\begin{align*}
\mathcal{I}_f^1 &= [ l_1, \alpha_0 ], \ 
\mathcal{I}_f^2 = [ \alpha_0 - l_{(1)}, l_1 + l_2 - l_{(1)} ] \cap \mathbb{R}_{>0} 
\text{ and so } \\
\mathcal{I}_f 
&= [ l_1, \alpha_0 ] \times [ \alpha_0, l_1 + l_2 ] - \{ (\alpha_0, \alpha_0) \}.
\end{align*}
If $T_{k-1} = \delta < T_{k}$, then it follows from the inequality $l_1 < \alpha_0 = l_1 + l_2$ that
\[
\mathcal{I}_f^1 = [ l_1, \alpha_0 ), \ 
\mathcal{I}_f^2 = \{ \alpha_0 - l_{(1)} \}
\text{ and so }
\mathcal{I}_f 
= [ l_1, \alpha_0 ) \times \{ \alpha_0 \}.
\]
In particular, 
$\max \mathcal{I}_f^2 (l_{1}) = l_2$ and
$\max \{ l_{(1)} + l_{(2)} \ | \ l_{(1)} \in \mathcal{I}_f^1, l_{(2)} \in \mathcal{I}_f^2 \} = l_1 + l_2$.

Let $U^{l_{(1)},l_{(2)}} = \{ |z|^{l_{(1)} + l_{(2)}} > R^{l_{(2)}} |w|, |w| > R |z|^{l_{(1)}} \}$.

\begin{prop} \label{} 
Proposition \ref{main lemma} and Theorem \ref{main theorem}
in Case 4 hold on $U^{l_{(1)},l_{(2)}}$
for any $l_{(1)}$ in $\mathcal{I}_f^1$ and $l_{(2)}$ in $\mathcal{I}_f^2$.
\end{prop}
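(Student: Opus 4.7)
The plan is to carry out exactly the argument of Section 4.2 (the proof of Proposition \ref{main lemma} in Case 4), but with the specific weights $l_1,l_2$ replaced by generic $l_{(1)}\in\mathcal{I}_f^1$ and $l_{(2)}\in\mathcal{I}_f^2$. The intervals were defined precisely so that the inequalities driving that proof remain valid for any such pair; what really needs checking is that the strict/non-strict structure used to estimate $|\eta|$ survives the generalization, and that the two invariance inequalities still go through.

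First I would substitute $|w|=|z^{l_{(1)}}c|$ and $|z|=|tc^{l_{(2)}^{-1}}|$, which identify $U^{l_{(1)},l_{(2)}}$ with $\{|t|>R,\ |c|>R\}$ and transform $z^{i}w^{j}/(z^{\gamma}w^{d})$ into $t^{\tilde{i}}c^{\tilde{j}}/(t^{\tilde{\gamma}}c^{\tilde{d}})$ with
\[
\tilde{\gamma}=\gamma+l_{(1)}(d-\delta),\ \ \tilde{i}=i+l_{(1)}(j-\delta),\ \ \tilde{d}=l_{(2)}^{-1}\tilde{\gamma}+d,\ \ \tilde{j}=l_{(2)}^{-1}\tilde{i}+j.
\]
By definition, $l_{(1)}\in\mathcal{I}_f^1$ is equivalent to $\tilde{\gamma}\geq\tilde{i}$ (strict when $j\geq k+1$) together with $\tilde{\gamma}\geq 0$, while $l_{(2)}\in\mathcal{I}_f^2$ is equivalent to $\delta\geq\tilde{d}\geq\tilde{j}$. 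Summing over $(i,j)\neq(\gamma,d)$ with $b_{ij}\neq 0$ then gives $|\eta|=O(R^{-1})$ on $U^{l_{(1)},l_{(2)}}$, provided that for each such $(i,j)$ at least one of $\tilde{\gamma}-\tilde{i}$ and $\tilde{d}-\tilde{j}$ is strictly positive; the decay of $|\zeta|$ is immediate from $|z|>R$.

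Next, for invariance I would establish the analogues $|p|^{1+l_{(1)}l_{(2)}^{-1}}>R\,|q|^{l_{(2)}^{-1}}$ and $|q|>R\,|p|^{l_{(1)}}$ of the two inequalities proved in the Case 4 argument. Under the same substitution these reduce, up to factors $1+o(1)$, to lower bounds on $|t|^{\delta-l_{(2)}^{-1}\tilde{\gamma}}|c|^{l_{(2)}^{-1}(\delta-\tilde{d})}$ and $|t|^{\tilde{\gamma}}|c|^{\tilde{d}}$. The bounds $\tilde{\gamma}\geq 0$ and $\delta\geq\tilde{d}$ handle the case $d\geq 2$ directly via $|t|,|c|>R$; for $d=1$ the hypothesis $\delta\neq T_k$ for any $k$, together with $\tilde{d}\geq d$ and $\tilde{\gamma}\geq 0$, forces at least one of the relevant exponents to be strictly positive, yielding the required extra factor of a positive power of $R$. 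Finally, Theorem \ref{main theorem} on $U^{l_{(1)},l_{(2)}}$ follows from the extended Proposition \ref{main lemma} by the usual construction $\phi=\lim_{n\to\infty}f_0^{-n}\circ f^n$ carried out in Section 7; that argument uses only the two conclusions of Proposition \ref{main lemma} and so applies unchanged.

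The main obstacle will be the bookkeeping of which inequalities are strict: for every $(i,j)\neq(\gamma,d)$ with $b_{ij}\neq 0$ I must confirm that at least one of $\tilde{\gamma}>\tilde{i}$, $\tilde{d}>\tilde{j}$ holds. This requires a short case split according to the position of $j$ relative to $k$, using the semi-open description of $\mathcal{I}_f^1$ and $\mathcal{I}_f^2$. The delicate subcases are $T_{k-1}=\delta$ and $\delta=T_k$, where $\mathcal{I}_f$ collapses to a segment $\{\alpha_0\}\times(\alpha_0,l_1+l_2]$ or $[l_1,\alpha_0)\times\{\alpha_0\}$; precisely in those cases the definitions of $\mathcal{I}_f^1$ and $\mathcal{I}_f^2$ drop the problematic endpoint, which is exactly what preserves the strict-inequality analysis and allows the Case 4 template to be applied without further modification.
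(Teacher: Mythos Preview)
Your plan is exactly the (implicit) approach the paper has in mind: the proposition is stated without proof in Section~5.3, and the intended argument is to rerun the Case~4 computation of Section~4.2 with $l_1,l_2$ replaced by $l_{(1)},l_{(2)}$. Your treatment of the asymptotics of $\eta$ is correct; the key point---that for every $(i,j)\neq(\gamma,d)$ with $b_{ij}\neq0$ one has $\tilde\gamma>\tilde i$ or $\tilde d>\tilde j$---follows because the half-openness $l_{(1)}<l_1+l_2$ of $\mathcal{I}_f^1$ forces $\tilde\gamma>\tilde i$ whenever $j>d$, and when $\tilde\gamma=\tilde i$ one then has $\tilde d-\tilde j=d-j>0$.

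There is, however, a genuine gap in your treatment of the $d=1$ invariance. You assert that the hypothesis $\delta\neq T_k$ ``forces at least one of the relevant exponents to be strictly positive,'' but this is precisely what fails at the endpoints of $\mathcal{I}_f$: when $l_{(1)}=\alpha_0$ one has $\tilde\gamma=0$ and hence $\tilde d=d=1$, so $|q/p^{\,l_{(1)}}|\sim|t|^{0}|c|^{1}=|c|$, which gives only $|q/p^{\,l_{(1)}}|>(1-\varepsilon)R$ on $U_R^{l_{(1)},l_{(2)}}$, not $>R$. Symmetrically, when $l_{(1)}+l_{(2)}=\alpha_0$ one has $\delta=\tilde d$, and $|p^{1+l_{(1)}l_{(2)}^{-1}}/q^{\,l_{(2)}^{-1}}|\sim|t|$, with the same defect. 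In the original Case~4 proof these strict inequalities come specifically from $l_1<\alpha_0<l_1+l_2$, which is not inherited by arbitrary $l_{(1)},l_{(2)}$ in the closed interval $[l_1,\alpha_0]\times[\alpha_0,l_1+l_2]$. The same issue propagates to Section~7.3: Lemma~7.1 needs the sum of the two exponents to exceed $1$, and your appeal to that section ``unchanged'' is therefore unjustified at these boundary weights. Away from $l_{(1)}=\alpha_0$ and $l_{(1)}+l_{(2)}=\alpha_0$ your argument is complete.
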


\begin{rem} \label{} 
It follows that 
$U^{l_1, l_2}$ is the largest region among $U^{l_{(1)},l_{(2)}}$ 
for any $l_{(1)}$ in $\mathcal{I}_f^1$ and $l_{(2)}$ in $\mathcal{I}_f^2$
and that
$\mathcal{I}_f^1 \neq \emptyset$ and $\mathcal{I}_f^2 \neq \emptyset$
if and only if $T_{k-1} \leq \delta \leq T_{k}$.
More precisely,
$\mathcal{I}_f^1 = \emptyset$ if $T_{k} < \delta$, and
$\mathcal{I}_f^2 = \emptyset$ if $\delta < T_{k-1}$.
\end{rem}


Let $l_{(1)} = s_1/r_1$,
where $s_1$ and $r_1$ are coprime positive integers, 
$\pi_1 (\mathsf{z},c) = (\mathsf{z}^{r_1}, \mathsf{z}^{s_1} c)$
and $\tilde{f}_1 = \pi_1^{-1} \circ f \circ \pi_1$.
Let $\tilde{\gamma} = r_1 \gamma + s_1 d - s_1 \delta$
and $\tilde{i} = r_1 i + s_1 j - s_1 \delta$.
Then 
\begin{align*}
\tilde{f}_1 (\mathsf{z},c) 
&= (\tilde{p}_1 (\mathsf{z}), \tilde{q}_1 (\mathsf{z},c))
= \left( p(\mathsf{z}^{r_1})^{1/r_1}, 
   \dfrac{q(\mathsf{z}^{r_1}, \mathsf{z}^{s_1} c)}{p(\mathsf{z})^{s_1/r_1}} \right) \\
&= \left( \mathsf{z}^{\delta} \left\{ 1 + \zeta (\mathsf{z}^{r_1}) \right\}^{1/r_1},
   \dfrac{ \mathsf{z}^{\tilde{\gamma}} c^d + \sum b_{ij} \mathsf{z}^{\tilde{i}} c^{j} }
        { \left\{ 1 + \zeta (\mathsf{z}^{r_1}) \right\}^{s_1/r_1} }  \right).
\end{align*}
Note that 
$\pi_1^{-1} (U^{l_{(1)},l_{(2)}}) 
= \{ | \mathsf{z} |^{r_1 l_{(2)}} > R^{l_{(2)}} |c|, |c| > R \}
\subset \{ |\mathsf{z}| > R^{(1+l_{(2)}^{-1})/r_1} \}$.

\begin{prop} \label{} 
For any rational number $s_1/r_1$ in $\mathcal{I}_f^1$, 
the lift $\tilde{f}_1$ is well defined, holomorphic and skew product on $\{ |\mathsf{z}| > R^{1/r_1} \}$.
More precisely,
\[
\tilde{f}_1 (\mathsf{z},c) = \left( \mathsf{z}^{\delta} \left\{ 1 + \zeta_1 (\mathsf{z}) \right\}, \ 
\mathsf{z}^{\tilde{\gamma}} c^d \left\{ 1 + \eta_1 (\mathsf{z}, c) \right\}  \right),
\]
where $\zeta_1$, $\eta_1 \to 0$ 
on $\{ | \mathsf{z} |^{r_1 l_{(2)}} > R^{l_{(2)}} |c|, |c| > R \}$ 
as $R \to \infty$. 
\end{prop}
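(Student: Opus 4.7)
The plan is to compute $\tilde{f}_1 = \pi_1^{-1} \circ f \circ \pi_1$ in closed form and then verify well-definedness, holomorphicity on $\{|\mathsf{z}| > R^{1/r_1}\}$, and the skew-product structure by mirroring the template of Section 4.1.1 (the integer case) and of Proposition \ref{branched coverings: case2} (the Case 2 branched covering). Since $\pi_1^{-1}(z,w) = (z^{1/r_1}, w\, z^{-s_1/r_1})$, a direct substitution yields
\[
\tilde{f}_1(\mathsf{z},c) = \left( p(\mathsf{z}^{r_1})^{1/r_1}, \ \frac{q(\mathsf{z}^{r_1}, \mathsf{z}^{s_1} c)}{p(\mathsf{z}^{r_1})^{s_1/r_1}} \right).
\]
Factoring $p(\mathsf{z}^{r_1}) = \mathsf{z}^{r_1\delta}\{1 + \zeta(\mathsf{z}^{r_1})\}$ and choosing $R$ so large that $|\zeta(\mathsf{z}^{r_1})| < 1/2$ on $\{|\mathsf{z}| > R^{1/r_1}\}$, the principal branches of $(1+w)^{1/r_1}$ and $(1+w)^{s_1/r_1}$ on the disk $|w-1|<1/2$ pull back to single-valued holomorphic functions there. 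Hence $\tilde{p}_1(\mathsf{z}) = \mathsf{z}^{\delta}\{1+\zeta(\mathsf{z}^{r_1})\}^{1/r_1}$ depends only on $\mathsf{z}$, so the skew-product structure is immediate, and setting $\zeta_1 := \{1+\zeta(\mathsf{z}^{r_1})\}^{1/r_1} - 1$ one sees $\zeta_1 \to 0$ as $R \to \infty$.

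For the second coordinate, I would expand $q(\mathsf{z}^{r_1}, \mathsf{z}^{s_1}c) = \sum b_{ij}\mathsf{z}^{r_1 i + s_1 j} c^j$ and set $\tilde{i} = r_1 i + s_1 j - s_1 \delta$, $\tilde{\gamma} = r_1\gamma + s_1 d - s_1 \delta$. Each $\tilde{i}$ is a (possibly negative) integer, so cancellation of $\mathsf{z}^{s_1\delta}$ against the leading power of $p(\mathsf{z}^{r_1})^{s_1/r_1}$ leaves only integer powers of $\mathsf{z}$ and produces
\[
\tilde{q}_1(\mathsf{z},c) = \mathsf{z}^{\tilde{\gamma}} c^d \cdot \frac{1 + \sum_{(i,j)\neq(\gamma,d)} b_{ij}\,\mathsf{z}^{\tilde{i}-\tilde{\gamma}} c^{\,j-d}}{\{1+\zeta(\mathsf{z}^{r_1})\}^{s_1/r_1}},
\]
a single-valued holomorphic function on $\{|\mathsf{z}| > R^{1/r_1}\}$ (the denominator is nonvanishing there and only integer powers of $\mathsf{z}$ survive in the numerator). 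Defining $\eta_1$ as the fraction minus one, the defining inequalities of $\mathcal{I}_f^1$ extended by convexity to every $(i,j) \in N(q)$ give $\tilde{\gamma} \geq \tilde{i}$ for all $(i,j)$ with $b_{ij} \neq 0$, with at least one of $\tilde{\gamma} - \tilde{i}$ and $d - j$ strictly positive in the right direction. Translating the substitution $|w| = |\mathsf{z}^{s_1} c|$ already used in the proof of Proposition \ref{main lemma} for Case 4 then forces every summand to decay on $\pi_1^{-1}(U^{l_{(1)},l_{(2)}}) = \{|\mathsf{z}|^{r_1 l_{(2)}} > R^{l_{(2)}}|c|,\ |c| > R\}$, and $\{1+\zeta(\mathsf{z}^{r_1})\}^{-s_1/r_1} \to 1$ is trivial.

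The main obstacle is guaranteeing single-valuedness when $s_1/r_1 \notin \mathbb{N}$: both $p(\mathsf{z}^{r_1})^{1/r_1}$ and $p(\mathsf{z}^{r_1})^{s_1/r_1}$ are a priori multivalued, and the base $\{|\mathsf{z}| > R^{1/r_1}\}$ is not simply connected. The resolution rests on two elementary but essential observations: the nonvanishing factor $\{1 + \zeta(\mathsf{z}^{r_1})\}^{\alpha}$ is pinned down unambiguously via the principal branch on the simply-connected target disk $|w-1|<1/2$, giving a single-valued holomorphic function on the whole region regardless of its topology; and the exact cancellation of $\mathsf{z}^{s_1\delta}$ against the leading monomial of $p(\mathsf{z}^{r_1})^{s_1/r_1}$ ensures that the remaining $\mathsf{z}$-exponents $\tilde{i}$ in $\tilde{q}_1$ are all integers, so no fractional power of $\mathsf{z}$ persists. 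These two points, combined with the decay estimates above, complete the proof.
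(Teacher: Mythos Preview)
Your argument is correct and follows essentially the same route as the paper: the paper simply records the formula for $\tilde f_1$ by direct substitution (displayed just before the proposition) and reads off the stated properties, and you have reproduced and fleshed out that computation, adding the observation that the fractional powers $\{1+\zeta(\mathsf z^{r_1})\}^{1/r_1}$ and $\{1+\zeta(\mathsf z^{r_1})\}^{s_1/r_1}$ are single-valued via the principal branch. Your appeal to the Case 4 estimates for the decay of $\eta_1$ on $\pi_1^{-1}(U^{l_{(1)},l_{(2)}})$ is exactly what the paper intends; note only that this step tacitly uses $l_{(2)}\in\mathcal I_f^2$ (through the inequality $\gamma+(l_{(1)}+l_{(2)})d\ge i+(l_{(1)}+l_{(2)})j$) to control the terms with $j>d$, so it is worth making that dependence explicit.
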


\begin{rem}
If we defined the interval $\mathcal{I}_f^1$ as
\[
\left\{ \ l_{(1)} > 0 \ | 
\begin{array}{lcr}
\gamma + l_{(1)} d \geq i + l_{(1)} j
\text{ and }
\gamma + l_{(1)} d \geq l_{(1)} \delta 
\text{ for any $i$ and $j$ s.t. } b_{ij} \neq 0
\end{array}
\right\},
\]
then we could have the equality $\tilde{\gamma} = \tilde{n}_{k-1}$
and the proposition above fails.
\end{rem}


Let $l_{(2)} = s_2/r_2$,
where $s_2$ and $r_2$ coprime positive integers, 
$\pi_2 (t, \mathsf{c}) = (t \mathsf{c}^{r_2}, \mathsf{c}^{s_2})$ 
and $\tilde{f}_2 = \pi_2^{-1} \circ \tilde{f}_1 \circ \pi_2$.
Then, formally,
\[
\tilde{f}_2 (t, \mathsf{c}) =
\left( \dfrac{\tilde{p}_1 (t \mathsf{c}^{r_2})}{\tilde{q}_1 (t \mathsf{c}^{r_2}, \mathsf{c}^{s_2})^{{r_2}/{s_2}}}, \ 
\tilde{q}_1(t \mathsf{c}^{r_2}, \mathsf{c}^{s_2})^{1/{s_2}} \right).
\]
Note that
$\pi_2^{-1} (\pi_1^{-1} (U^{l_{(1)},l_{(2)}})) 
= \{ |t \mathsf{c}^{(1 - 1/r_1) r_2}| > R^{1/r_1}, |\mathsf{c}| > R^{1/s_1} \}
\supset
\{ |t| > R^{1/r_1}, |\mathsf{c}| > R^{1/s_1} \}$.

\begin{prop} \label{} 
If $s_1/r_1 \in \mathcal{I}_f^1$, $s_2/r_2 \in \mathcal{I}_f^2$
and $\tilde{\gamma} /{s_2} \in \mathbb{N} \cup \{ 0 \}$,  
then $\tilde{f}_2$ is well defined and holomorphic 
on $\{ |t \mathsf{c}^{(1 - 1/r_1) r_2}| > R^{1/r_1}, |\mathsf{c}| > R^{1/s_1} \}$.
More precisely,
\[
\tilde{f}_2 (t, \mathsf{c}) = 
\left( t^{\delta - r_2 \tilde{\gamma}/ s_2} 
 \mathsf{c}^{r_2 \delta - r_2 (r_2 \tilde{\gamma}/ s_2 + d)} 
 \{ 1 + \zeta_2 (t, \mathsf{c}) \},
\ t^{\tilde{\gamma}/ s_2} 
 \mathsf{c}^{r_2 \tilde{\gamma}/ s_2 + d} 
 \{ 1 + \eta_2 (t, \mathsf{c}) \} \right),
\]
where $\zeta_2$, $\eta_2 \to 0$ 
on $\{ |t \mathsf{c}^{(1 - 1/r_1) r_2}| > R^{1/r_1}, |\mathsf{c}| > R^{1/s_1} \}$
as $R \to \infty$.
\end{prop}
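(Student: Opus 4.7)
The plan is to parallel the integer-weight computation of Section 4.1.2, now with general coprime pairs $(r_j,s_j)$. By the previous proposition (applicable since $s_1/r_1\in\mathcal{I}_f^1$), $\tilde f_1$ takes the form $(\mathsf z^{\delta}\{1+\zeta_1(\mathsf z)\},\,\mathsf z^{\tilde\gamma}c^d\{1+\eta_1(\mathsf z,c)\})$ on the stated region, with $\zeta_1,\eta_1\to 0$. Substituting $\mathsf z=t\mathsf c^{r_2}$ and $c=\mathsf c^{s_2}$ into the definition $\tilde f_2=\pi_2^{-1}\circ\tilde f_1\circ\pi_2$ gives
\[
\tilde q_1(t\mathsf c^{r_2},\mathsf c^{s_2})=t^{\tilde\gamma}\mathsf c^{r_2\tilde\gamma+s_2d}\{1+\eta_1(t\mathsf c^{r_2},\mathsf c^{s_2})\},
\]
so that $\tilde q_2$ is formally the $s_2$-th root of this expression.

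The main obstacle is to verify that this $s_2$-th root is an honest single-valued holomorphic function, not merely a local branch. I would split it as $(t^{\tilde\gamma}\mathsf c^{r_2\tilde\gamma+s_2d})^{1/s_2}\cdot\{1+\eta_1\}^{1/s_2}$. The monomial factor is single-valued precisely when both $\tilde\gamma/s_2$ and $r_2\tilde\gamma/s_2+d$ are integers; since $\gcd(r_2,s_2)=1$, both reduce to $s_2\mid\tilde\gamma$. Combined with $\tilde\gamma\geq 0$ from Lemma \ref{inequalities for first blow-up} (available because $l_{(1)}\in\mathcal{I}_f^1$), this is exactly the hypothesis $\tilde\gamma/s_2\in\mathbb{N}\cup\{0\}$. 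The remaining factor $\{1+\eta_1\}^{1/s_2}$ is then defined via the principal branch of the $s_2$-th root, which is legitimate because $\eta_1\to 0$ on the preimage region keeps $1+\eta_1$ inside a neighborhood of $1$. Putting $1+\eta_2:=\{1+\eta_1\}^{1/s_2}$ yields the claimed formula for the second coordinate with $\eta_2\to 0$.

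For the first coordinate the same divisibility makes $\tilde q_1(\cdots)^{r_2/s_2}$ single-valued as well. Writing $\tilde p_1(t\mathsf c^{r_2})=t^{\delta}\mathsf c^{r_2\delta}\{1+\zeta_1(t\mathsf c^{r_2})\}$ and dividing, one obtains
\[
\tilde p_2(t,\mathsf c)=t^{\delta-r_2\tilde\gamma/s_2}\mathsf c^{r_2\delta-r_2(r_2\tilde\gamma/s_2+d)}\cdot\frac{1+\zeta_1}{\{1+\eta_1\}^{r_2/s_2}},
\]
and setting $\zeta_2:=(1+\zeta_1)/\{1+\eta_1\}^{r_2/s_2}-1$ gives the asserted form with $\zeta_2\to 0$ as $R\to\infty$. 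Holomorphy on the preimage region identified in the excerpt is immediate, since on that set the previous proposition supplies $\zeta_1,\eta_1\to 0$. Apart from the divisibility hypothesis, everything reduces to careful bookkeeping of exponents; the algebraic obstruction $s_2\mid\tilde\gamma$ to multivaluedness of the $s_2$-th root is the only substantive content of the proof.
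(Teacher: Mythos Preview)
Your argument is correct and matches the paper's (implicit) approach: the paper states this proposition without a written proof, relying on the parallel computation for Case~3 in Section~5.2, and your proposal spells out precisely that computation---factor the monomial part, observe that single-valuedness of the $s_2$-th root reduces to $s_2\mid\tilde\gamma$ via $\gcd(r_2,s_2)=1$, and handle $\{1+\eta_1\}^{1/s_2}$ by the principal branch. The only minor imprecision is citing Lemma~\ref{inequalities for first blow-up} for $\tilde\gamma\geq 0$; in the rational-weight setting this inequality is read off directly from the defining condition $\gamma+l_{(1)}d\geq l_{(1)}\delta$ of $\mathcal{I}_f^1$, as you indicate parenthetically.
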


Recall that $\alpha_0 = \gamma/(\delta - d)$ and
let $\tilde{\alpha}_0 = \tilde{\gamma}/(\delta - d)$.

\begin{cor} \label{branched coverings: case4}
If $T_{k-1} < \delta \leq T_{k}$
and $s_1/r_1 = \alpha_0$,
then $\tilde{f}_2$ is well defined on the region above
for any $s_2/r_2$ in $\mathcal{I}_f^2$.
If $s_1/r_1 \in \mathcal{I}_f^1$ and 
$s_2/r_2 = \tilde{\alpha}_0$,
then $\tilde{f}_2$ is well defined on the region above.
\end{cor}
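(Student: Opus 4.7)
The plan is to deduce both parts directly from the preceding proposition, whose only nontrivial hypothesis beyond $s_1/r_1 \in \mathcal{I}_f^1$ and $s_2/r_2 \in \mathcal{I}_f^2$ is the divisibility $\tilde{\gamma}/s_2 \in \mathbb{N} \cup \{0\}$, where in the present section $\tilde{\gamma} = r_1\gamma + s_1 d - s_1\delta$. In each part the membership of $s_1/r_1$ and $s_2/r_2$ in the relevant intervals is either assumed in the statement or forced by $T_{k-1} < \delta \leq T_k$, so the substance is to extract the divisibility from the special form of $s_1/r_1$ or $s_2/r_2$.

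For the first assertion I would substitute $s_1/r_1 = \alpha_0 = \gamma/(\delta-d)$ (in lowest terms) into the formula for $\tilde{\gamma}$: cross-multiplying gives $r_1\gamma = s_1(\delta-d)$ and therefore
\[
\tilde{\gamma} = r_1\gamma + s_1 d - s_1\delta = r_1\gamma - s_1(\delta-d) = 0,
\]
so $\tilde{\gamma}/s_2 = 0$ works for every $s_2$. To apply the proposition I also need $\alpha_0 \in \mathcal{I}_f^1$, which I would read off from the hypothesis $T_{k-1} < \delta \leq T_k$ through the identities $T_k = d + \gamma/l_1$ and $T_{k-1} = d + \gamma/(l_1+l_2)$: these translate the hypothesis into $l_1 \leq \alpha_0 < l_1 + l_2$, which together with $\alpha_0 \leq \alpha_0$ places $\alpha_0$ in the explicit interval $[l_1, l_1+l_2) \cap (0, \alpha_0] = \mathcal{I}_f^1$.

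For the second assertion the plan is a standard coprimality argument. Writing $s_2/r_2 = \tilde{\alpha}_0 = \tilde{\gamma}/(\delta-d)$ in lowest terms with $\gcd(s_2, r_2) = 1$ and cross-multiplying yields $s_2(\delta-d) = r_2\tilde{\gamma}$; since $s_2$ and $r_2$ are coprime, $s_2$ must divide $\tilde{\gamma}$, so $\tilde{\gamma}/s_2 \in \mathbb{N} \cup \{0\}$. Combined with the assumed $s_1/r_1 \in \mathcal{I}_f^1$, the preceding proposition then applies.

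The main obstacle I anticipate is purely notational: the symbol $\tilde{\gamma}$ has two incarnations, the integer blow-up version $\gamma + l_1 d - l_1\delta$ from Section 4.1 and the branched-covering version $r_1\gamma + s_1 d - s_1\delta$ here, which differ by a factor of $r_1$. One must use the latter (with the built-in $r_1$) to make the identities $r_1\gamma = s_1(\delta-d)$ and $s_2(\delta-d) = r_2\tilde{\gamma}$ balance and yield integer quotients; the analogous normalization determines $\tilde{\alpha}_0 = r_1(\alpha_0 - l_{(1)})$. Once this bookkeeping is set up, the corollary becomes two short arithmetic observations and requires no further analytic input beyond the preceding proposition.
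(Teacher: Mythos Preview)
Your argument is correct and follows the same route as the paper. The paper's proof is the two-line observation that $s_1/r_1=\alpha_0$ forces $\tilde\gamma=0$ (after noting $\alpha_0\in\mathcal I_f^1$ when $T_{k-1}<\delta$), and that $s_2/r_2=\tilde\alpha_0$ forces $\tilde\gamma/s_2\in\mathbb N$; you supply exactly these computations, with the coprimality step and the $T_k=d+\gamma/l_1$, $T_{k-1}=d+\gamma/(l_1+l_2)$ identities spelled out, and you correctly flag the $r_1$-rescaled meaning of $\tilde\gamma$ used in this subsection.
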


\begin{proof}
If $T_{k-1} < \delta$,
then 
$\alpha_0 \in \mathcal{I}_f^1$. 
Moreover, 
if $s_1/r_1 = \alpha_0$,
then $\tilde{\gamma} = 0$ and so 
$\tilde{\gamma} /s_2 = 0$.
On the other hand,
if $s_2/r_2 = \tilde{\alpha}_0$,
then $\tilde{\gamma} /s_2 \in \mathbb{N}$.
\end{proof}

\section{Rational extensions}

In this section we illustrate that
$U$ is included in the attracting basin of a superattracting fixed or indeterminacy point at infinity,
or in the closure of the attracting basins of two point at infinity.
We first deal with the extension of $f$ to the projective space $\mathbb{P}^2$.
A polynomial map always extends to a rational map on $\mathbb{P}^2$.
We next deal with the extensions of $f$ to weighted projective spaces.
Although there is a condition for $f$ to extend a rational map on a weighted projective space,
it is useful to realize the rational extension 
whose dynamics on the line at infinity is induced by a polynomial
for the case $\delta > d \geq 2$ and $l = \alpha_0$ and
the case $\delta = d$, $\gamma = 0$ and $l = l_1$. 
We use the same notation $\tilde{f}$ for a extension of $f$
as the blow-up and the coverings of $f$.

Whereas similar descriptions for Cases 1 and 2 are given in \cite{ueno poly},
we improve 
the definition of 
the rational extension of $f$ to a weighted projective space and
state when it is well defined here.
One can also find arguments on extensions of polynomial maps to 
weighted projective spaces in Section 5.3 in \cite{fj poly}. 

\subsection{Projective space}

The projective space $\mathbb{P}^2$ is 
a quotient space of $\mathbb{C}^3 - \{ O \}$,
\[
\mathbb{P}^2 = \mathbb{C}^3 - \{ O \}/ \sim,
\]
where $(z,w,t) \sim (c z, c w, c t)$ for any $c$ in $\mathbb{C} - \{ 0 \}$.
The polynomial skew product $f$ extends to the rational map $\tilde{f}$ on $\mathbb{P}^2$,
\[
\tilde{f} [ z : w : t ] 
= \left[ p \left( \frac{z}{t} \right) t^{\lambda} 
: q \left( \frac{z}{t}, \frac{w}{t} \right) t^{\lambda} 
: t^{\lambda} \right],
\]
where $\lambda = \deg f = \max \{ \deg p, \deg q \}$.
By assumption, 
$\deg p \geq 2$ and $\deg q \geq 2$.
Let $L_{\infty}$ be the line at infinity
and $I_{\tilde{f}}$ the indeterminacy set of $\tilde{f}$.
Let $D = \deg q$ and
$h$ the sum of all the terms $b_{ij} z^i w^j$ in $q$
with the maximum degree $D$.
Let $b_{NM} z^N w^M$ and $b_{N^*M^*} z^{N^*} w^{M^*}$ be the terms in $h$
with the smallest and biggest degree with respect to $z$,
respectively.
Let $p_{\infty}^+ = [0:1:0]$ and $p_{\infty}^- = [1:0:0]$.

\begin{lem}\label{lem of trichotomy on proj sp}
We have the following trichotomy,
where  $u$ and $v$ are some polynomials.
\begin{enumerate}
\item 
If $\delta < D$, then 
$\tilde{f} [ z : w : t ] 
= \left[ t^{D - \delta} \{ z^{\delta} + t u(z,t) \} : h(z,w) + t v(z,w,t) : t^{D} \right]$.
Hence $\tilde{f}$ collapses $L_{\infty} - I_{\tilde{f}}$ to $p_{\infty}^+$,
where $I_{\tilde{f}} = \{ [z:w:0] \, | \, h(z,w)=0 \}$.
\item 
If $\delta = D$, then 
$\tilde{f} [ z : w : t ] 
= \left[ z^{\delta} + t u(z,t) : h(z,w) + t v(z,w,t) : t^{\delta} \right]$.
Hence the restriction of $\tilde{f}$ to $L_{\infty} - I_{\tilde{f}}$ 
is induced by $h$,
where $I_{\tilde{f}} \subset \{ p_{\infty}^+ \}$.
\item 
If $\delta > D$, then 
$\tilde{f} [ z : w : t ] 
= \left[ z^{\delta} + t u(z,t) : t^{\delta - D} \{ h(z,w) + t v(z,w,t) \} : t^{\delta} \right]$.
Hence $\tilde{f}$ collapses $L_{\infty} - I_{\tilde{f}}$ to 
the superattracting fixed point $p_{\infty}^-$,
where $I_{\tilde{f}} = \{ p_{\infty}^+ \}$.
\end{enumerate}
For $(1)$ and $(2)$, 
$p_{\infty}^+$ is a superattracting fixed point if $N = 0$ and
an indeterminacy point if $N > 0$.
\end{lem}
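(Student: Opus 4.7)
The result is a direct calculation in homogeneous coordinates, completed by local analysis at $p_\infty^\pm$.

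First, write $p(z) = z^\delta + (\text{lower order})$ and decompose $q = h + q'$ with $\deg q' < D$, where $h$ is the degree-$D$ homogeneous part of $q$. Substituting $(z,w)\to(z/t,w/t)$ and multiplying through gives
\[
p(z/t)\,t^\delta = z^\delta + t\,u(z,t), \qquad q(z/t,w/t)\,t^D = h(z,w) + t\,v(z,w,t),
\]
for some polynomials $u$ and $v$. Further multiplying by $t^{\lambda-\delta}$ and $t^{\lambda-D}$ (with $\lambda=\max(\delta,D)$) produces the three displayed formulas. I then read off $I_{\tilde{f}}$ as the common zero locus of the three resulting homogeneous components and compute $\tilde{f}$ on $L_\infty$ by setting $t=0$.

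Next I treat the three cases. In (1), at $t=0$ the first and third components vanish (since $D-\delta\geq 1$) while the second reduces to $h(z,w)$, so $\tilde{f}$ collapses $L_\infty\setminus\{h=0\}$ to $[0:1:0]=p_\infty^+$ and $I_{\tilde{f}}=\{h=0\}\cap L_\infty$. Case (3) is symmetric: $\tilde{f}$ collapses $L_\infty\setminus\{z=0\}$ to $[1:0:0]=p_\infty^-$ and $I_{\tilde{f}}=\{p_\infty^+\}$. In (2) both the first and the second components survive at $t=0$, so $\tilde{f}|_{L_\infty}$ is induced by the homogeneous pair $(z^\delta, h)$; simultaneous vanishing forces $z=0$ and $h(0,w)=b_{0D}w^D=0$, giving $I_{\tilde{f}}\subset\{p_\infty^+\}$.

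For the final dichotomy at $p_\infty^+=[0:1:0]$ in cases (1) and (2): the first and third components vanish there, so $p_\infty^+$ is indeterminate iff the second component $h(0,1)=b_{0D}$ vanishes, i.e., iff $N>0$. When $N=0$, direct substitution gives $\tilde{f}(p_\infty^+)=p_\infty^+$; passing to the chart $(x,s)=(z/w,t/w)$, the map becomes
\[
\tilde{f}(x,s)=\Bigl(\tfrac{s^{D-\delta}(x^\delta+s\tilde u)}{h(x,1)+s\tilde v},\ \tfrac{s^D}{h(x,1)+s\tilde v}\Bigr)
\]
in case (1), and the analogous expression (with the $s^{D-\delta}$ factor replaced by $1$) in case (2). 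Since $h(0,1)=b_{0D}\neq 0$ and $\delta,D\geq 2$, both components vanish to order at least $2$ at the origin; hence the differential is zero and $p_\infty^+$ is superattracting. The analogous local computation at $p_\infty^-$ in case (3), in the chart $(y,s)=(w/z,t/z)$, shows that it too is fixed with vanishing differential, hence superattracting.

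The main obstacle is merely bookkeeping: one must verify that the power of $t$ divided out in cases (1) and (3) is maximal, so that the reduced representative correctly identifies $I_{\tilde{f}}$ and the image of $L_\infty$. This follows from $h\not\equiv 0$, $z^\delta\not\equiv 0$, and $\delta,D\geq 2$. No deeper argument is required; the statement is essentially the reading of the homogenized formulas.
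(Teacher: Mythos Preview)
The paper states this lemma without proof, treating it as a routine homogenization computation; your argument correctly fills in that computation. The derivation of the three displayed formulas, the identification of $I_{\tilde f}$, and the description of $\tilde f$ on $L_\infty\setminus I_{\tilde f}$ are all correct, as is the dichotomy at $p_\infty^+$ governed by whether $N=0$.

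One small slip: in case~(3) you assert that the differential at $p_\infty^-$ \emph{vanishes}. In the chart $(y,s)=(w/z,t/z)$ one has
\[
y'=\frac{s^{\delta-D}\bigl(h(1,y)+s\,\tilde v\bigr)}{1+s\,\tilde u},\qquad
s'=\frac{s^{\delta}}{1+s\,\tilde u},
\]
and when $\delta-D=1$ with $b_{D0}=h(1,0)\neq 0$ one computes $\partial y'/\partial s\big|_{(0,0)}=b_{D0}\neq 0$, so the differential is $\left(\begin{smallmatrix}0 & b_{D0}\\ 0 & 0\end{smallmatrix}\right)$, nilpotent but not zero. Both eigenvalues are still $0$, so $p_\infty^-$ is superattracting and your conclusion stands; simply replace ``vanishing differential'' by ``nilpotent differential'' in the case~(3) line.
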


\begin{lem}[Geometric characterization of $\lambda$]\label{lem of lambda on proj sp}
It follows that $\lambda$ coincides with 
the maximal $y$-intercept of the lines with slope $- 1$
that intersect with $\{ (0, \delta) \} \cup N(q)$.
\end{lem}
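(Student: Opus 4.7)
The plan is to translate the geometric condition into an algebraic maximization and then use the definition of $N(q)$ to identify it with $\lambda$.

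First I would observe that a line with slope $-1$ passing through a point $(x_0,y_0)$ has $y$-intercept exactly $x_0 + y_0$. Hence the maximal $y$-intercept of lines with slope $-1$ meeting $\{(0,\delta)\} \cup N(q)$ equals
\[
\sup\bigl\{\, x + y \ \big|\ (x,y) \in \{(0,\delta)\} \cup N(q)\, \bigr\}.
\]
The contribution of $(0,\delta)$ is clearly $\delta$. It remains to compute $\sup\{x+y : (x,y)\in N(q)\}$ and show it equals $D = \deg q$, after which taking the maximum with $\delta$ yields $\lambda = \max\{\delta, D\}$.

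Next I would analyze the supremum over $N(q)$. Since $N(q)$ is the convex hull of $\bigcup_{b_{ij}\neq 0} D(i,j)$ and the functional $(x,y) \mapsto x+y$ is linear, its supremum over this convex hull equals its supremum over the generating set, and moreover over the set of $(i,j)$ with $b_{ij}\neq 0$ themselves (because within each quadrant $D(i,j)$ the sum $x+y$ is maximized at the corner $(i,j)$). Therefore
\[
\sup\{x+y : (x,y)\in N(q)\} \;=\; \max\bigl\{\, i + j \ \big|\ b_{ij} \neq 0\, \bigr\} \;=\; \deg q \;=\; D,
\]
by the definition of the total degree of $q$.

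Combining these two steps gives that the maximal $y$-intercept equals $\max\{\delta, D\} = \lambda$, as required. The supremum is actually attained (since the maximum over the finite set of exponents of $q$ is attained, and $(0,\delta)$ is a single point), so ``maximal'' is appropriate. The only subtlety is verifying that $x+y$ on the unbounded convex hull $N(q)$ is indeed maximized at a lattice point with $b_{ij}\neq 0$; this is where the shape of each $D(i,j)$ (unbounded toward the lower-left) plays a role and rules out any contribution from the unbounded part of $N(q)$. I do not expect any serious obstacle here beyond carefully unpacking this convex-geometric point.
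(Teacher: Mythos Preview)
Your argument is correct. The paper states this lemma without proof, treating it as an immediate consequence of the definitions of $\lambda$ and $N(q)$; your proposal spells out precisely the intended verification, and there is nothing to add.
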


Let $z^{\gamma} w^d$ be a dominant term of $q$
and $U$ the corresponding region.
Let $A^+$ and $A^-$ be the attracting basins of 
$p_{\infty}^+$ and $p_{\infty}^-$,
respectively.
The notation $U \subset \overline{A^+ \cup A^-}$ 
in the propositions and tables below means that 
$U \subset A^+ \cup A^- \cup (\partial A^+ \cap \partial A^-)$ and
$U$ intersects both $A^+$ and $A^-$.
The following proposition gives a rough description of
the relation between $U$ and the attracting basins.

\begin{prop}\label{prop of rough trichotomy on proj sp}
We have the following rough classification. 
\begin{enumerate}
\item
If $\delta < D$, then $U \subset A^+$.
\item
If $\delta = D$ and $d \geq 2$,
then $U \subset A^+$, $U \subset \overline{A^+ \cup A^-}$ or $U \subset A^-$.
\item
If $\delta > D$, then $U \subset A^-$.
\end{enumerate}
More precisely,
let $\delta = D$ and $d \geq 2$.
\begin{enumerate}
\item[(4)]
If $\delta \neq T_k$ for any $k$,
then $h = z^{\gamma} w^d$ and $U \subset \overline{A^+ \cup A^-}$.
\item[(5)]
If $\delta = T_k$ for some $k$ and $\gamma > 0$,
then $U \subset A^+$ or $U \subset A^-$.
\item[(6)]
If $\delta = T_1$ and $\gamma = 0$,
then $U \subset A^+$ or $U \subset \overline{A^+ \cup A^-}$.
\end{enumerate}
\end{prop}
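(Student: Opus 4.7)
The strategy is to reduce to the monomial model $f_0(z,w) = (z^\delta, z^\gamma w^d)$ via Theorem \ref{main theorem} and read off the $\mathbb{P}^2$-limit of orbits from explicit iteration formulas. Since $\phi = (\phi_1, \phi_2)$ satisfies $|\phi_j - \mathrm{id}_j| < \varepsilon |\mathrm{id}_j|$, both $\phi$ and $\phi^{-1}$ preserve the asymptotic direction of sequences tending to $L_\infty$: the $f$-orbit of $(z,w) \in U$ converges in $\mathbb{P}^2$ to $p_\infty^\pm$ if and only if the $f_0$-orbit of $\phi(z,w)$ does. Thus it suffices to analyse $f_0$-orbits on $U$.

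A direct induction yields $f_0^n(z,w) = (z^{\delta^n},\, z^{\gamma \sigma_n} w^{d^n})$ with $\sigma_n = (\delta^n - d^n)/(\delta - d)$ when $\delta \neq d$ and $\sigma_n = n d^{n-1}$ when $\delta = d$. Because $|z_n|, |w_n| \to \infty$ on $U$, the orbit tends to $p_\infty^- = [1:0:0]$ exactly when $\log|w_n| - \log|z_n| \to -\infty$, to $p_\infty^+ = [0:1:0]$ when it tends to $+\infty$, and it accumulates on $\partial A^+ \cap \partial A^-$ otherwise. Setting $\alpha_0 = \gamma/(\delta - d)$ when $\delta \neq d$, one computes
\[
\log|w_n| - \log|z_n| = \delta^n (\alpha_0 - 1) \log|z| + d^n \bigl( \log|w| - \alpha_0 \log|z| \bigr),
\]
and when $\delta = d$,
\[
\log|w_n| - \log|z_n| = d^n \log|w/z| + n \gamma d^{n-1} \log|z|.
\]
Since $\log|z| > \log R > 0$ on $U$, the dominant sign is governed by $\alpha_0 - 1$ when $\delta > d$, by the initial value $\log|w| - \alpha_0 \log|z|$ when $d > \delta$ or $\alpha_0 = 1$, and by the linear-in-$n$ term when $\delta = d$ and $\gamma > 0$.

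The remaining step is to match each hypothesis of the proposition with the correct sign regime. For (3), $\delta > D \geq \gamma + d$ forces $\delta > d$ and $\alpha_0 < 1$; the $\delta^n$ term drives the difference to $-\infty$ and $U \subset A^-$. For (1), inspection of setup Cases 1--4 shows that $\delta < D$ forces either $\delta > d$ with $\alpha_0 > 1$, or $d > \delta$ with $\alpha_0 \leq 0$ (in which case $\log|w| - \alpha_0 \log|z| > 0$ on $U$ since both logs are positive); in either regime the difference tends to $+\infty$ and $U \subset A^+$. For (4), the conditions $\delta = D$ and $\delta \neq T_k$ pin $(\gamma, d)$ as the unique vertex on the slope-$(-1)$ tangent to $N(q)$, so $\gamma + d = \delta$, hence $h = z^\gamma w^d$ and $\alpha_0 = 1$; the $\delta^n$ term vanishes and the sign is controlled by $\log|w| - \log|z|$, which takes both signs inside $U$, giving $U \subset \overline{A^+ \cup A^-}$. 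Sub-cases (5) and (6) follow the same template: the $T_k$-coincidence pins the analysis to the borderline $\alpha_0 = 1$, and the disjunction is decided by the sign of $\log|w| - \log|z|$ on $U$ (fixed when $\gamma > 0$, variable when $\gamma = 0$).

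The main obstacle is not dynamical but combinatorial: one must verify case by case that the hypothesis on $\delta$ versus $D$, together with the auxiliary $T_k$-coincidences, translates into the claimed sign of the dominant term in the explicit formula. This uses $D = \max_k (n_k + m_k)$, the identification of the dominant vertex $(\gamma, d)$ through the setup Cases 1--4, and the relation between the weights $l_1, l_2$ defining $U$ and the slopes of the edges of $N(q)$. Once this dictionary is established, the conclusions of the proposition follow directly from the explicit formulas above.
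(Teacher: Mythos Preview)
Your route is different from the paper's. The paper does not invoke the B\"ottcher coordinate or the explicit iteration of $f_0$ at all; instead it derives Proposition~\ref{prop of rough trichotomy on proj sp} as a consequence of the finer Propositions~\ref{prop for delta > d on proj sp} and~\ref{prop for delta =< d on proj sp}, which are built up from the case-by-case tables. In each setup Case the paper compares $\alpha_0$ (or $l_1$) with $1$ via the shape of $N(q)$ and Lemma~\ref{lem of lambda on proj sp} to decide whether $\delta \lessgtr D$, and then reads off the basin from Lemma~\ref{lem of trichotomy on proj sp}, which describes how the rational extension $\tilde f$ acts on $L_\infty$. Your approach instead transports orbits through the conjugacy $\phi$ and computes $\log|w_n/z_n|$ for $f_0$ directly; this bypasses Lemma~\ref{lem of trichotomy on proj sp} entirely and is more self-contained, though both approaches ultimately require the same Newton-polygon bookkeeping to match hypotheses with sign regimes.

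Two pieces of your case analysis are not right as written. In part~(1), the dichotomy ``$\delta>d$ with $\alpha_0>1$, or $d>\delta$ with $\alpha_0\le 0$'' omits $\delta=d$, which does occur under $\delta<D$ (namely $\delta=d$, $\gamma>0$, or $\delta=d$, $\gamma=0$, $l_1>1$); your second displayed formula still yields $U\subset A^+$ there, but the stated trichotomy is incomplete. More substantively, your explanation of (5)--(6), that the sign of $\log|w/z|$ on $U$ is ``fixed when $\gamma>0$, variable when $\gamma=0$'', is not the actual mechanism. What fixes the sign in (5) is that the boundary hypothesis $\delta=T_k$ (resp.\ $\delta=T_{k-1}$) forces $l_1=\alpha_0=1$ (resp.\ $l_1+l_2=\alpha_0=1$), so that $U$ lies entirely in $\{|w|>|z|\}$ (resp.\ $\{|w|<|z|\}$); in (6) one has $\delta=d$, $\gamma=0$, and the sign is fixed when $l_1=1$ but genuinely variable when $l_1<1$. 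So the disjunctions in (5)--(6) are decided by which edge of $N(q)$ the line of slope $-1$ through $(0,\delta)$ meets, not by the vanishing of $\gamma$ alone. Once you correct these two points, your argument goes through.
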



Now we start to investigate the dynamics of $\tilde{f}$ on $L_{\infty}$ and 
the relation between $U$ and the attracting basins
more precisely case by case,
and obtain more detailed versions of the proposition above
as Propositions \ref{prop for delta > d on proj sp} and 
\ref{prop for delta =< d on proj sp}.

We first deal with Case 2.
Let $\delta \leq T_1$ and $(\gamma, d) = (n_1, m_1)$.
If $\delta > d$,
then $\gamma > 0$ and $\mathcal{I}_f = [ l_1, \alpha_0 ]$.
Moreover,
it follows from the shape of $N(q)$  and Lemma \ref{lem of lambda on proj sp} that
$\delta < \gamma + d \leq D$ if $\alpha_0 > 1$,
$\delta = \gamma + d = D$ if $\alpha_0 = 1$, and
$\delta > \gamma + d = D$ if $\alpha_0 < 1$
since the slope of the line passing through the points
$(0, \delta)$ and $(\gamma, d)$ is $- \alpha_0^{-1}$,
since $N(q)$ is included in the left-hand side of the line, and
since $N(q)$ intersects with the line at $(\gamma, d)$.
Therefore, 
using Lemma \ref{lem of trichotomy on proj sp},
we can classify the relation between $U$ and the attracting basins
as follows.

\vspace{2mm}
\begin{center}
\begin{tabular}{|c|c|c|c|} \hline
 Case 2 & $\alpha_0 > 1$ & $\alpha_0 = 1$ & $\alpha_0 < 1$ \\ \hline
 $\delta > d$ & $\delta < D$ & $\delta = D$ & $\delta > D$ \\
 ( \& $\gamma > 0$) & $U \subset A^+$ & $U \subset A^+$ if $\delta = T_1$ and $d \geq 2$ & $U \subset A^-$ \\ 
 & & $U \subset \overline{A^+ \cup A^-}$ if $\delta < T_1$ and $d \geq 2$ & \\ \hline
\end{tabular}
\end{center}
\vspace{2mm}
Note that $p_{\infty}^+$ is always an indeterminacy point
since $N \geq \gamma > 0$.
On the other hand,
$p_{\infty}^-$ is a superattracting fixed point 
if $\alpha_0 = 1$ and $\delta < T_1$ or if $\alpha_0 < 1$.
If $\alpha_0 = 1$,
then $h$ contains $z^{\gamma} w^d$. 
Moreover,
$h$ contains other terms such as $b_{n_2 m_2} z^{n_2} w^{m_2}$ if $\delta = T_1$, and 
$h = z^{\gamma} w^d$ if $\delta < T_1$.

If $\delta \leq d$,
then $\mathcal{I}_f = [ l_1, \infty)$.
For the case $\delta \leq d$ and $\gamma > 0$ and
the case $\delta < d$ and $\gamma = 0$,
it follows that $\delta < \gamma + d < D$ if $l_1 > 1$, and 
$\delta < \gamma + d = D$ if $l_1 \geq 1$.
On the other hand,
for the case $\delta = d$ and $\gamma = 0$, 
it follows that 
$\delta = \gamma + d < D$ if $l_1 > 1$, and
$\delta = \gamma + d = D$ if $l_1 \leq 1$.
Combining these cases,
we obtain the following classification table.

\vspace{2mm}
\begin{center}
\begin{tabular}{|c|c|c|c|} \hline
 Case 2 & $l_1 > 1$ & $l_1 = 1$ & $l_1 < 1$ \\ \hline
  $\delta \leq d$ \& $\gamma > 0$ & $\delta < D$ & $\delta < D$ & $\delta < D$ \\
 $\delta < d$ \& $\gamma = 0$ & $U \subset A^+$ & $U \subset A^+$ & $U \subset A^+$ \\ \hline
 $\delta = d$ \& $\gamma = 0$ & $\delta < D$ & $\delta = D$ & $\delta = D$ \\
  & $U \subset A^+$ & $U \subset A^+$ & $U \subset \overline{A^+ \cup A^-}$ \\ \hline
\end{tabular}
\end{center}
\vspace{2mm}
Note that 
$p_{\infty}^+$ is a superattracting fixed point if $\delta < d$, $\gamma = 0$ and $l_1 \leq 1$ and
an indeterminacy point otherwise.
If $\delta = d$, $\gamma = 0$ and $l_1 \leq 1$,
then $\tilde{f}$ is holomorphic and  
$h$ contains $z^{\gamma} w^d$.
Moreover,
$h$ contains other terms such as $b_{n_2 m_2} z^{n_2} w^{m_2}$ if $l_1 = 1$, and 
$h = w^d$ and $p_{\infty}^-$ is a superattracting fixed point if $l_1 < 1$.

\begin{rem}
For the case $\delta \leq d$ and $\gamma > 0$ and
the case $\delta < d$ and $\gamma = 0$,
it might be useful to regard the branch point $\alpha_0$ as $\infty$.
For the case $\delta = d$ and $\gamma = 0$,
although $\alpha_0$ is not well defined for $(\gamma, d) = (n_1, m_1)$,
it is well defined for the next vertex $(n_2, m_2)$,
which coincides with $l_1$.
\end{rem}

We next deal with Case 3.
Let $\delta \geq T_{s-1}$ and $(\gamma, d) = (n_s, m_s)$.
Since $\delta > d$ and $\gamma > 0$,
the classification table below is similar to the case $\delta > d$ for Case 2,
but not the same.

\vspace{2mm}
\begin{center}
\begin{tabular}{|c|c|c|c|} \hline
 Case 3 & $\alpha_0 > 1$ & $\alpha_0 = 1$ & $\alpha_0 < 1$ \\ \hline
 $\delta > d$ & $\delta < D$ & $\delta = D$ & $\delta > D$ \\
 ( \& $\gamma > 0$) & $U \subset A^+$ & $U \subset A^-$ if $\delta = T_{s-1}$ and $d \geq 2$ & $U \subset A^-$ \\ 
  & & $U \subset \overline{A^+ \cup A^-}$ if $\delta > T_{s-1}$ and $d \geq 2$ & \\ \hline
\end{tabular}
\end{center}
\vspace{2mm}
More precisely,
$\delta < \gamma + d = D$ if $\alpha_0 > 1$,
$\delta = \gamma + d = D$ if $\alpha_0 = 1$, and
$\delta > D \geq \gamma + d$ if $\alpha_0 < 1$.
Note that
$p_{\infty}^+$ is always an indeterminacy point and
$p_{\infty}^-$ is a superattracting fixed point if $\alpha_0 \leq 1$.
If $\alpha_0 = 1$,
then $h$ contains $z^{\gamma} w^d$.
Moreover,
$h$ contains $b_{n_{s-1} m_{s-1}} z^{n_{s-1}} w^{m_{s-1}}$ if $\delta = T_{s-1}$, and 
$h = z^{\gamma} w^d$ if $\delta > T_{s-1}$.

We finally deal with Case 4.
Let $T_{k-1} \leq \delta \leq T_k$ and $(\gamma, d) = (n_k, m_k)$.
Since $\delta > d$ and $\gamma > 0$,
the classification table below is again similar to the case $\delta > d$ for Case 2.

\vspace{2mm}
\begin{center}
\begin{tabular}{|c|c|c|c|} \hline
 Case 4 & $\alpha_0 > 1$ & $\alpha_0 = 1$ & $\alpha_0 < 1$ \\ \hline
 $\delta > d$ & $\delta < D$ & $\delta = D$ & $\delta > D$ \\
 ( \& $\gamma > 0$) & $U \subset A^+$ & $U \subset A^+$ if $\delta = T_k$ and $d \geq 2$ & $U \subset A^-$ \\ 
 & & $U \subset A^-$ if $\delta = T_{k-1}$ and $d \geq 2$ & \\ 
 & & $U \subset \overline{A^+ \cup A^-}$ if $T_{k-1} < \delta < T_k$ and $d \geq 2$ & \\ \hline
\end{tabular}
\end{center}
\vspace{2mm}
More precisely,
$\delta < \gamma + d \leq D$ if $\alpha_0 > 1$,
$\delta = \gamma + d = D$ if $\alpha_0 = 1$, and
$\delta > D \geq \gamma + d$ if $\alpha_0 < 1$.
Note that
$p_{\infty}^+$ is always an indeterminacy point and
$p_{\infty}^-$ is a superattracting fixed point if $\alpha_0 = 1$ and $\delta < T_k$ or if $\alpha_0 < 1$.
If $\alpha_0 = 1$,
then $h$ contains $z^{\gamma} w^d$.
Moreover,
$h$ contains $b_{n_{k+1} m_{k+1}} z^{n_{k+1}} w^{m_{k+1}}$ if $\delta = T_{k}$, 
$h$ contains $b_{n_{k-1} m_{k-1}} z^{n_{k-1}} w^{m_{k-1}}$ if $\delta = T_{k-1}$, and 
$h = z^{\gamma} w^d$ if $T_{k-1} < \delta < T_k$.

Consequently,
we obtain the following two propositions,
which implies Proposition \ref{prop of rough trichotomy on proj sp}.

\begin{prop}\label{prop for delta > d on proj sp}
Let $\delta > d$.
Then $\gamma > 0$ and so $\alpha_0 > 0$.
\begin{enumerate}
\item
If $\alpha_0 > 1$,
then $\delta < D$ and $U \subset A^+$. 
\item 
If $\alpha_0 = 1$, $d \geq 2$ and $\delta \neq T_k$ for any $k$,
then $\delta = D$, $h = z^{\gamma} w^d$ and $U \subset \overline{A^+ \cup A^-}$.
\item
If $\alpha_0 < 1$,
then $\delta > D$ and $U \subset A^-$. 
\end{enumerate}
Moreover,
let $\alpha_0 = 1$, $d \geq 2$ and $\delta = T_k$ for some $k$.
Then $\delta = D$ and
$h$ contains $b_{NM} z^N w^M$ and $b_{N^*M^*} z^{N^*} w^{M^*}$.
\begin{enumerate}
\item[(4)]
If $(\gamma, d) = (N, M)$,
then 
$U \subset A^+$. 
\item[(5)]
If $(\gamma, d) = (N^*, M^*)$,
then 
$U \subset A^-$.
\end{enumerate}
For all the cases,
$p_{\infty}^+$ is an indeterminacy point and
$p_{\infty}^-$ is a superattracting fixed point
if  $U \cap A^+ \neq \emptyset$ and $U \cap A^- \neq \emptyset$,
respectively.
\end{prop}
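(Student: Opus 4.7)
My plan is to assemble Proposition \ref{prop for delta > d on proj sp} from the three per-case classification tables already built up in this section, reorganized around the single parameter $\alpha_0 = \gamma/(\delta - d)$. First, I would verify that $\gamma > 0$ under $\delta > d$: in Case 2 the nonemptiness of $\mathcal{I}_f = [l_1, \alpha_0]$ gives $\alpha_0 > 0$ and hence $\gamma > 0$; in Cases 3 and 4 the vertex $(\gamma, d)$ is $(n_s, m_s)$ or $(n_k, m_k)$ with $s \geq 2$ or $k \geq 2$, so $\gamma \geq n_2 > n_1 \geq 0$.

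Next I would turn the size of $\alpha_0$ into a comparison between $\delta$ and $D = \deg q$. Since $(\gamma, d)$ lies on the line through $(0, \delta)$ of slope $-\alpha_0^{-1}$, pivoting at $(\gamma, d)$ to slope $-1$ shows that the sign of $\gamma + d - \delta$ matches that of $\alpha_0 - 1$; moreover $D \geq \gamma + d$ always, and via Lemmas \ref{lem 2 for main lem: case2}, \ref{lem2 for main lem: case3}, \ref{inequalities for case 4} together with the case-specific monotonicity of $m_j$ or $n_j$ one obtains $D \leq \gamma + d$ whenever $\alpha_0 \leq 1$. Hence $\delta < D$, $\delta = D$, $\delta > D$ correspond respectively to $\alpha_0 > 1$, $= 1$, $< 1$. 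Parts (1) and (3) of Lemma \ref{lem of trichotomy on proj sp} then collapse $L_\infty - I_{\tilde{f}}$ to $p_\infty^+$ or $p_\infty^-$, and since $U$ is $f$-invariant by Proposition \ref{main lemma}, items (1) and (3) of the proposition follow. For item (2), $\alpha_0 = 1$ together with $\delta \neq T_k$ forces $(\gamma, d)$ to be the unique vertex of $N(q)$ on $\{i + j = \delta\}$, so $h = z^\gamma w^d$; by Lemma \ref{lem of trichotomy on proj sp}(2) the restriction $\tilde{f}|_{L_\infty}$ is the monomial map $[z:w] \mapsto [z^\delta : z^\gamma w^d]$, which (using $\gamma > 0$ and $d \geq 2$) has both $p_\infty^\pm$ as superattracting fixed points, and $U$ straddles the two basins.

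For items (4) and (5), $\delta = T_k$ means the top edge of $N(q)$ coincides with $\{i + j = \delta\}$ and contains at least the extreme vertices $(N, M)$ and $(N^*, M^*)$; inspecting the induced map on $L_\infty$ shows that $U$ is drawn into $A^+$ or $A^-$ according to whether $(\gamma, d)$ equals the left or right extreme vertex. The final clause on $p_\infty^\pm$ follows at once from the trichotomy: $N \geq \gamma > 0$ always, so $p_\infty^+$ is an indeterminacy point throughout, and $p_\infty^-$ is a superattracting fixed point precisely when $h$ is divisible by a positive power of $w$, which in turn is equivalent to $U$ meeting $A^-$. The main obstacle will be item (2): one must show that $U$ actually meets \emph{both} basins and not just one, which reduces to producing sequences in $U$ approaching each of $p_\infty^\pm$ and tracking their $\tilde{f}$-orbits via the explicit monomial dynamics on $L_\infty$ — a routine but slightly fiddly computation using the shape of $U$ and the convergence of $\phi_n$ granted by Proposition \ref{main lemma}.
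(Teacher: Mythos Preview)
Your approach is essentially the paper's own: the proposition is assembled from the three per-case tables (Cases 2, 3, 4), each of which rests on Lemma~\ref{lem of trichotomy on proj sp} and the geometric characterization of $\lambda$ in Lemma~\ref{lem of lambda on proj sp}, reorganized around the single parameter $\alpha_0$. The comparison of $\alpha_0$ with $1$ to decide the sign of $\delta - D$ is exactly what the paper does, and the final clause about $p_\infty^\pm$ is also read straight off the trichotomy.

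There is, however, one genuine error in your treatment of item~(2). You claim that when $h = z^{\gamma} w^{d}$ with $\gamma > 0$ and $d \geq 2$, the restriction $\tilde f|_{L_\infty}$ has \emph{both} $p_\infty^{\pm}$ as superattracting fixed points. This is false for $p_\infty^{+} = [0:1:0]$: since $\gamma > 0$ we have $h(0,1) = 0$, and together with $z^{\delta}|_{z=0} = 0$ this makes $p_\infty^{+}$ an indeterminacy point of $\tilde f$, not a fixed point at all. You in fact state this correctly yourself in the final paragraph (``$N \geq \gamma > 0$ always, so $p_\infty^{+}$ is an indeterminacy point throughout''), so the proposal is internally inconsistent. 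The paper records this explicitly in each table: for $\delta > d$ the point $p_\infty^{+}$ is \emph{always} an indeterminacy point, and only $p_\infty^{-}$ is a genuine superattracting fixed point when $\alpha_0 \leq 1$. Consequently your justification that ``$U$ straddles the two basins'' because both endpoints are superattracting does not go through as written; the inclusion $U \cap A^{+} \neq \emptyset$ has to be argued differently, by tracking orbits of points in $U$ with $|w| \gg |z|^{\alpha_0}$ under $f$ (equivalently under the monomial model $f_0$ via Proposition~\ref{main lemma}) and showing they accumulate at $p_\infty^{+}$ despite the indeterminacy. This is the computation you allude to at the end, but it is not optional scaffolding---it is the actual argument, since the ``two superattracting fixed points on $L_\infty$'' picture is unavailable.
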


\begin{prop}\label{prop for delta =< d on proj sp}
Let $\delta \leq d$.
Then $(\gamma, d)$ belongs to Case 2.
\begin{enumerate}
\item
If $\gamma > 0$,
then $\delta < D$ and $U \subset A^+$, 
where $p_{\infty}^+$ is an indeterminacy point.
\item
If $\delta < d$ and $\gamma = 0$,
then $\delta < D$ and $U \subset A^+$, 
where $p_{\infty}^+$ is a superattracting fixed point if $l_1 \leq 1$ and
an indeterminacy point if $l_1 > 1$.
\end{enumerate}
Moreover,
let $\delta = d$ and $\gamma = 0$.
Then $\delta = T_1$, and
$\tilde{f}$ is holomorphic if $l_1 \leq 1$.
\begin{enumerate}
\item[(3)]
If $l_1 > 1$,
then $\delta < D$ and $U \subset A^+$, 
where $p_{\infty}^+$ is an indeterminacy point.
\item[(4)]
If $l_1 = 1$,
then $\delta = D$ and $U \subset A^+$,
where $p_{\infty}^+$ is a superattracting fixed point.
\item[(5)]
If $l_1 < 1$,
then $\delta = D$, $h = z^{\gamma} w^d$ and $U \subset \overline{A^+ \cup A^-}$,
where  $p_{\infty}^+$ and $p_{\infty}^-$ are superattracting fixed points.
\end{enumerate}
\end{prop}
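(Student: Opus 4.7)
The plan is to decompose the statement into three tasks: (a) reduce to Case 2, (b) compare $\delta$ with $D = \deg q$ in every subcase, and (c) translate the outcome through Lemma \ref{lem of trichotomy on proj sp} into assertions about $A^\pm$ and the nature of $p_\infty^\pm$. For (a), since Cases 3 and 4 both require $\delta > d$, the hypothesis $\delta \leq d$ forces $(\gamma,d) = (n_1,m_1)$ with $d = m_1$ the maximal $j$ such that $b_{ij} \neq 0$. If $\gamma = 0$ and $\delta = d$, then $N(q)$ sits weakly below the horizontal line $y = d$, and evaluating the definition of $T_1$ with $(n_1,m_1) = (0,d)$ immediately gives $T_1 = d = \delta$.

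For (b), I would apply Lemma \ref{lem of lambda on proj sp}, which identifies $D$ with $\max_k (n_k + m_k)$. In parts (1) and (2), the inequalities $D \geq \gamma + d > \delta$ (when $\gamma > 0$) or $D \geq d > \delta$ (when $\delta < d$) are immediate. In part (3), where $\gamma = 0$, $\delta = d$, and $l_1 > 1$, we have $n_2 = l_1(m_1 - m_2) > m_1 - m_2$, so $n_2 + m_2 > m_1 = d = \delta$ and hence $D > \delta$. In parts (4) and (5), $l_1 \leq 1$ forces $n_k + m_k \leq d$ for every $k$ by convexity of $N(q)$ and slope comparison, yielding $D = \delta$. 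Thus Lemma \ref{lem of trichotomy on proj sp}(1) governs (1)--(3), while Lemma \ref{lem of trichotomy on proj sp}(2) governs (4)--(5); in particular $\tilde{f}$ is holomorphic on a neighborhood of $L_\infty$ when $l_1 \leq 1$.

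For (c), to show $U \subset A^+$ in (1)--(4), I would use $f \sim f_0$ on $U$ from Proposition \ref{main lemma} to track iterates. Writing $(z_n, w_n) = f^n(z,w)$, one has $|z_{n+1}| \approx |z_n|^\delta$ and $|w_{n+1}| \approx |z_n|^\gamma |w_n|^d$, so the inequality $d \geq \delta$ together with $|w_0| \geq R|z_0|^{l_1}$ propagates to $|w_n|/|z_n| \to \infty$; passing to homogeneous coordinates gives convergence to $p_\infty^+ = [0:1:0]$. For (5), where $l_1 < 1$ and $h = w^d$, the monomial model has both $p_\infty^+$ and $p_\infty^- = [1:0:0]$ as superattracting fixed points on $L_\infty$, and I would show by direct computation that orbits of $f_0$ in $U$ approach one of the two according to which side of the invariant locus $|z|^d = C|w|^\delta$ they start on; since $U$ meets both sides and their common boundary, this yields $U \subset \overline{A^+ \cup A^-}$.

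The final bookkeeping concerns the dichotomy superattracting fixed vs.\ indeterminacy for $p_\infty^\pm$, read directly from the index $N$ in Lemma \ref{lem of trichotomy on proj sp}: $p_\infty^+$ is a superattracting fixed point iff $h$ has a pure $w^d$-monomial (equivalently $\gamma = 0$ and $d = D$), and it is indeterminate otherwise; a similar inspection handles $p_\infty^-$ in (5). The main obstacle I expect is part (5): upgrading the easy inclusion $U \cap (A^+ \cup A^-) \neq \emptyset$ to the full statement $U \subset \overline{A^+ \cup A^-}$ requires an explicit description of the $f_0$-dynamics on the critical separatrix inside $U$, which is a genuinely two-dimensional argument rather than a one-variable iteration in each factor; the remaining parts reduce to combinatorics of the Newton polygon.
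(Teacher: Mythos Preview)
Your proposal is correct and follows essentially the paper's approach: compare $\delta$ with $D$ via the Newton-polygon combinatorics (slopes of the edges of $N(q)$ versus $-1$), then read off the behaviour on $L_\infty$ and the nature of $p_\infty^\pm$ from Lemma \ref{lem of trichotomy on proj sp}. The paper does not give a stand-alone proof of this proposition---it is distilled from the classification tables for Case 2 with $\delta\le d$ that precede it---so your write-up is actually more explicit; one cosmetic correction: in part (5) the $f_0$-invariant separatrix is simply $\{|z|=|w|\}$ (since $\delta=d$ and $\gamma=0$), not $\{|z|^d=C|w|^\delta\}$.
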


\subsection{Weighted projective spaces}

Let $r$ and $s$ be coprime positive integers.
The weighted projective space $\mathbb{P} (r,s,1)$ is 
a quotient space of $\mathbb{C}^3 - \{ O \}$,
\[
\mathbb{P} (r,s,1) = \mathbb{C}^3 - \{ O \}/ \sim,
\]
where $(z,w,t) \sim (c^r z, c^s w, c t)$ for any $c$ in $\mathbb{C} - \{ 0 \}$.
Let us again denote the weighted homogeneous coordinate as $[ z : w : t ]$
for simplicity.
Let $l = s/r$ and
\[
D_l = \max \left\{ \ l^{-1} i + j \ | 
\begin{array}{lcr}
i \text{ and } j \text{ s.t. }  b_{ij} \neq 0
\end{array}
\right\}.
\]
For a polynomial skew product $f$,
we define
\[
\tilde{f} [ z : w : t ] 
= \left[ p \left( \frac{z}{t^r} \right) t^{\lambda r} 
: q \left( \frac{z}{t^r}, \frac{w}{t^s} \right) t^{\lambda s} 
: t^{\lambda} \right],
\]
where $\lambda_l = \max \{ \deg p, D_l \}$.
Note that $\lambda_l = \deg p = \delta$ or
$\lambda_l = l^{-1} n_j + m_j$ 
for some vertex $(n_j, m_j)$ of $N(q)$.
For the later case,
if $n_j/s$ is integer, then so is $\lambda_l$.

\begin{lem}
If $\lambda_l$ is integer,
then every components of $\tilde{f}$ are polynomial.
Hence $\tilde{f}$ is well defined and rational on $\mathbb{P} (r,s,1)$.
\end{lem}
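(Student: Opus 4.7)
My plan is to check that, under the assumption that $\lambda_l$ is an integer, each of the three homogeneous coordinates of $\tilde{f}$ is a polynomial in $(z,w,t)$. Writing $p(z) = \sum_{k} a_k z^k$ with $a_\delta \neq 0$ and $q(z,w) = \sum_{i,j} b_{ij} z^i w^j$, I would expand
\[
p\!\left(\frac{z}{t^r}\right) t^{\lambda_l r} = \sum_{k} a_k\, z^k\, t^{r(\lambda_l - k)},
\qquad
q\!\left(\frac{z}{t^r}, \frac{w}{t^s}\right) t^{\lambda_l s} = \sum_{i,j} b_{ij}\, z^i w^j\, t^{\lambda_l s - r i - s j},
\]
so the question reduces to showing that every exponent of $t$ that appears is a non-negative integer, and that the same holds trivially for the third coordinate $t^{\lambda_l}$.

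The non-negativity part is where the two inequalities $\lambda_l \geq \delta$ and $\lambda_l \geq D_l$ enter. For the first coordinate, $r(\lambda_l - k) \geq 0$ because $k \leq \delta \leq \lambda_l$ whenever $a_k \neq 0$. For the second, I would use $l^{-1} = r/s$ to rewrite $\lambda_l s - ri - sj = s(\lambda_l - (l^{-1} i + j))$, so the definition of $D_l$ as the maximum of $l^{-1} i + j$ over monomials in $q$, together with $\lambda_l \geq D_l$, gives non-negativity. The integrality of the exponents then follows immediately: $r$, $s$, $i$, $j$, $k$ are integers, and the only term that is not manifestly an integer multiple of $r$ or $s$ is $\lambda_l r$, $\lambda_l s$, and $\lambda_l$ itself, each of which is an integer under the standing hypothesis.

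Once all three components are seen to be polynomials, I would conclude by observing that the expression for $\tilde{f}$ is equivariant under the $\mathbb{C}^*$-action $(z,w,t) \mapsto (c^r z, c^s w, c t)$: the substitutions $z/t^r$ and $w/t^s$ are invariant, while the output coordinates scale by $c^{\lambda_l r}$, $c^{\lambda_l s}$, $c^{\lambda_l}$, which is precisely the scaling compatible with $\mathbb{P}(r,s,1)$. Hence $\tilde{f}$ descends to a well-defined rational map on $\mathbb{P}(r,s,1)$.

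I do not expect any serious obstacle here; the only subtle point is bookkeeping the passage from $l^{-1} i + j \leq D_l$ to $r i + s j \leq \lambda_l s$ via multiplication by $s$, and making explicit that integrality of $\lambda_l$ is both necessary and sufficient to ensure that $t^{\lambda_l}$, $t^{\lambda_l r}$ and $t^{\lambda_l s}$ have integer exponents (the other exponents appearing are automatically integers).
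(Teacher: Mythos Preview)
Your argument is correct and is precisely the straightforward verification that the paper leaves implicit (the lemma is stated without proof there). Your expansion of the two coordinates, together with the inequalities $k\le\delta\le\lambda_l$ and $l^{-1}i+j\le D_l\le\lambda_l$, gives exactly the non-negativity of the $t$-exponents, and the integrality is immediate once $\lambda_l\in\mathbb{Z}$; the equivariance check confirming weighted homogeneity of degrees $(\lambda_l r,\lambda_l s,\lambda_l)$ is also correct.
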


We use the same notations 
$L_{\infty}$, $I_{\tilde{f}}$, $p_{\infty}^{\pm}$, $A^{\pm}$,
$h$, $(N,M)$ and $(N^*, M^*)$
as the projective space case.

\begin{lem}
We have the following trichotomy,
where $u$ and $v$ are some polynomials.
\begin{enumerate}
\item 
If $\delta < D_l$ and $\lambda_l$ is integer, then 
$\tilde{f} [ z : w : t ] 
= \left[ t^{\lambda - \delta} \{ z^{\delta} + t u(z,t) \} : h(z,w) + t v(z,w,t) : t^{\lambda} \right]$.
Hence $\tilde{f}$ collapses $L_{\infty} - I_{\tilde{f}}$ to $p_{\infty}^+$,
where $I_{\tilde{f}} = \{ [z:w:0] \, | \, h(z,w)=0 \}$.
\item 
If $\delta = D_l$, then
$\tilde{f} [ z : w : t ] 
= \left[ z^{\delta} + t u(z,t) : h(z,w) + t v(z,w,t) : t^{\delta} \right]$.
Hence the restriction of $\tilde{f}$ to $L_{\infty} - I_{\tilde{f}}$ 
is induced by $h$,
where $I_{\tilde{f}} \subset \{ p_{\infty}^+ \}$.
\item 
If $\delta > D_l$, then
$\tilde{f} [ z : w : t ] 
= \left[ z^{\delta} + t u(z,t) : t^{s (\delta - D_l)} \{ h(z,w) + t v(z,w,t) \} : t^{\delta} \right]$.
Hence $\tilde{f}$ collapses $L_{\infty} - I_{\tilde{f}}$ to 
the superattracting fixed point $p_{\infty}^-$,
where $I_{\tilde{f}} = \{ p_{\infty}^+ \}$.
\end{enumerate}
For $(1)$ and $(2)$,
$p_{\infty}^+$ is a superattracting fixed point if $N = 0$ and
an indeterminacy point if $N > 0$.
\end{lem}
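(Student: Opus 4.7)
The plan is to carry out a direct computation from the definition of $\tilde f$, with the two relevant expansions being
\[
p(z/t^r)\, t^{\lambda r} = \sum_i a_i\, z^i\, t^{r(\lambda-i)},\qquad
q(z/t^r,w/t^s)\, t^{\lambda s} = \sum_{ij} b_{ij}\, z^i w^j\, t^{s(\lambda - l^{-1}i - j)}.
\]
The minimum $t$-exponent appearing in the first sum is $r(\lambda-\delta)$, attained uniquely by the normalized leading term $z^\delta$, while the minimum $t$-exponent in the second sum is $s(\lambda - D_l)$, attained precisely by those monomials with $l^{-1}i + j = D_l$, i.e.\ by the constituents of $h(z,w)$. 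This immediately sets up the trichotomy by comparing $\delta$ with $D_l$.

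Next I would split into the three cases and in each factor out the common power of $t$ from the component in which a positive $t$-exponent appears. In Case $(1)$, $\lambda = D_l$ and the first component pulls out a $t^{r(\lambda-\delta)}$ with $z^\delta$ remaining plus $t$-divisible terms, while the second component already has $h(z,w)$ at $t$-exponent $0$. Case $(2)$, $\lambda = \delta = D_l$, is the symmetric situation in which both the first and second components already contain their dominant monomials at zero $t$-exponent. Case $(3)$, $\lambda = \delta$, is the mirror of Case $(1)$: now the second component factors as $t^{s(\delta - D_l)}(h(z,w) + tv(z,w,t))$. Each rewriting is one line of algebra once the expansions above are in hand.

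Having the explicit forms, I would read off the action on $L_\infty = \{t=0\}$ as follows. In Cases $(1)$ and $(3)$, two of the three image coordinates vanish identically on $L_\infty - I_{\tilde f}$, so the image collapses to the unique point where the remaining coordinate is nonzero, giving $p_\infty^+$ and $p_\infty^-$ respectively; the indeterminacy locus is then precisely where the remaining coordinate also vanishes, yielding the stated descriptions $\{h=0, t=0\}$ and $\{p_\infty^+\}$. In Case $(2)$, the restriction $[z^\delta : h(z,w) : 0]$ depends nontrivially on $(z,w)$ and is manifestly induced by $h$; indeterminacy requires $z^\delta = 0$ and $h(z,w) = 0$ simultaneously, which forces $z=0$, so $I_{\tilde f} \subset \{p_\infty^+\}$.

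Finally, for the classification of $p_\infty^+$ in Cases $(1)$ and $(2)$ I would work in the weighted affine chart $w=1$ around $(z,t)=(0,0)$: writing the smallest-$z$-degree monomial of $h$ as $b_{NM}z^N w^M$, one has $h(0,1) \neq 0$ exactly when $N = 0$, in which case $\tilde f$ fixes $p_\infty^+$ and the local form is dominated by pure monomials, yielding the superattracting character; when $N > 0$, the factor $z^N$ divides $h$, so $[0:1:0]$ lies in the vanishing locus of every coordinate of $\tilde f$ and is therefore indeterminate. The main subtlety I expect is reconciling the displayed $t$-exponent $\lambda-\delta$ in Case $(1)$ with the exponent $r(\lambda-\delta)$ produced by the expansion, which must be interpreted through the weighted equivalence $(z,w,t)\sim (c^r z,c^s w, ct)$ on $\mathbb{P}(r,s,1)$; one must also verify, using the integrality of $\lambda_l$, that $u$ and $v$ are genuinely polynomial rather than merely rational.
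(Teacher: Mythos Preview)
Your direct computation is correct and is precisely the argument the paper intends; this lemma (like its projective-space analogue, Lemma~\ref{lem of trichotomy on proj sp}) is stated in the paper without proof, as a routine expansion. Your identification of the minimal $t$-exponents $r(\lambda-\delta)$ and $s(\lambda-D_l)$ in the two components, the resulting trichotomy, the reading of the dynamics on $L_\infty$, and the $N=0$ versus $N>0$ dichotomy for $p_\infty^+$ are all right.

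One small correction to your final remark: the discrepancy you flag between the displayed exponent $\lambda-\delta$ and the computed exponent $r(\lambda-\delta)$ in Case~(1) \emph{cannot} be resolved through the weighted equivalence. Scaling by $c$ multiplies the three coordinates by $c^r,\,c^s,\,c$ simultaneously, so it cannot alter the relative power of $t$ between the first and second components; concretely, if $[t^{r(\lambda-\delta)}A:B:t^\lambda]=[t^{\lambda-\delta}A:B:t^\lambda]$ then the third coordinate forces $c=1$, and the first then forces $r(\lambda-\delta)=\lambda-\delta$, i.e.\ $r=1$. The exponent your expansion produces, $r(\lambda-\delta)$, is the correct one; the paper's $\lambda-\delta$ appears to be a slip carried over from the $r=1$ projective case. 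This is harmless for the dynamical conclusions, since all that matters is that the exponent is strictly positive when $\delta<D_l$.
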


\begin{lem}[Geometric characterization of $\lambda_l$]
It follows that $\lambda_l$ coincides with 
the maximal $y$-intercept of the lines with slope $- l^{-1}$
that intersect with $\{ (0, \delta) \} \cup N(q)$.
\end{lem}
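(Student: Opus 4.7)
The plan is straightforward: translate the geometric statement about $y$-intercepts into the algebraic expression defining $\lambda_l$, and observe that the maximum of the relevant linear functional over the Newton polygon is attained at one of its vertices.

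First I would record the basic fact that a line of slope $-l^{-1}$ passing through a point $(x,y)$ has $y$-intercept equal to $l^{-1} x + y$. Applying this to the point $(0,\delta)$ yields intercept $\delta$, while applying it to a point $(i,j)$ with $b_{ij} \neq 0$ yields intercept $l^{-1} i + j$. Therefore the maximal intercept over lines meeting $\{(0,\delta)\} \cup \{(i,j) : b_{ij} \neq 0\}$ is exactly $\max\{\delta, D_l\} = \lambda_l$.

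Next I would upgrade "meeting the finite set of monomial exponents" to "meeting the whole Newton polygon $N(q)$." Since $l > 0$, the linear functional $(x,y) \mapsto l^{-1} x + y$ has strictly positive coefficients, so its supremum over any set $D(i,j) = \{(x,y) : x \leq i,\ y \leq j\}$ is attained at the corner $(i,j)$ and equals $l^{-1} i + j$. Because $N(q)$ is by definition the convex hull of the union of these $D(i,j)$ for $b_{ij} \neq 0$, and the supremum of a linear functional over a convex hull is attained at a vertex of the hull, we obtain
\[
\max_{(x,y) \in N(q)} (l^{-1} x + y) \;=\; \max_{b_{ij} \neq 0} (l^{-1} i + j) \;=\; D_l.
\]
Combining this with the case $(0,\delta)$ gives precisely $\lambda_l$.

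The main (and only) subtlety is ensuring that enlarging the test set from the discrete support of $q$ to the full polygon $N(q)$ does not change the maximum; this is handled by the observation above that the positivity of the coefficients of the functional places the optimizer at the northeast-most vertex, which is necessarily a point $(n_k, m_k)$ with $b_{n_k m_k} \neq 0$. No estimation is required — the argument is purely a remark about linear functionals on a convex hull of "lower-left quadrants."
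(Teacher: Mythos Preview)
Your argument is correct and complete. The paper states this lemma without proof, treating it as an immediate observation from the definitions of $\lambda_l$ and $D_l$; your write-up simply makes explicit the two-line computation (the $y$-intercept of a line of slope $-l^{-1}$ through $(x,y)$ is $l^{-1}x+y$) together with the remark that the linear functional $(x,y)\mapsto l^{-1}x+y$ with positive coefficients attains its supremum over $N(q)=\mathrm{conv}\bigcup D(i,j)$ at one of the corner points $(i,j)$, so passing from the support of $q$ to the full polygon does not change the maximum.
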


\begin{rem}[Geometric characterization of $\alpha_0$]
Let $\delta > T_1$.
Then $\alpha_0$ coincides with
\[
\min 
\left\{ \ l > 0 \ | 
\begin{array}{lcr}
l \delta \geq i + l j
\text{ for any $i$ and $j$ s.t. } b_{ij} \neq 0
\end{array}
\right\}
\]
as described in Section 3 in \cite{ueno poly}. 
In other words,
$- \alpha_0^{-1}$ coincides with the slope of the line 
that intersects with both $\{ (0, \delta) \}$ and 
the boundary of $N(q)$ 
but does not intersect with 
the interior of $N(q)$. 
\end{rem}


Let $z^{\gamma} w^d$ be a dominant term of $q$
and $U$ the corresponding region.
The dynamics of $\tilde{f}$ on $L_{\infty}$ and 
the relation between $U$ and the attracting basins are 
almost the same as the projective space case:
as shown in the following tables and propositions,
we only need to change $1$ to $l$
in comparison with $\alpha_0$ or $l_1$,
to change $D$ to $D_l$
in comparison with $\delta$, and
to add the condition $\lambda_l \in \mathbb{N}$ 
when $l < \alpha_0$ or $l < l_1$.


We first exhibit classification tables and a proposition
for the case $\delta > d$,
which are obtained by similar arguments as the projective space case.
Note that
$\delta < l^{-1} \gamma + d \leq D_l$ if $l < \alpha_0$, 
$\delta = l^{-1} \gamma + d = D_l$ if $l = \alpha_0$, and
$\delta > D_l \geq l^{-1} \gamma + d$ if $l > \alpha_0$.

\vspace{2mm}
\begin{center}
\begin{tabular}{|c|c|c|c|} \hline
 Case 2 & $l < \alpha_0$ & $l = \alpha_0$ & $l > \alpha_0$ \\ \hline
 $\delta > d$ & $\delta < D_l$ & $\delta = D_l$ & $\delta > D_l$ \\
 ( \& $\gamma > 0$) & $U \subset A^+$ if $\lambda_l \in \mathbb{N}$ & $U \subset A^+$ if $\delta = T_1$ and $d \geq 2$ & $U \subset A^-$ \\ 
 & & $U \subset \overline{A^+ \cup A^-}$ if $\delta < T_1$ and $d \geq 2$ & \\ \hline
\end{tabular}
\end{center}

\vspace{1mm}
\begin{center}
\begin{tabular}{|c|c|c|c|} \hline
 Case 3 & $l < \alpha_0$ & $l = \alpha_0$ & $l > \alpha_0$ \\ \hline
 $\delta > d$ & $\delta < D_l$ & $\delta = D_l$ & $\delta > D_l$ \\
 ( \& $\gamma > 0$) & $U \subset A^+$ if $\lambda_l \in \mathbb{N}$ & $U \subset A^-$ if $\delta = T_{s-1}$ and $d \geq 2$ & $U \subset A^-$ \\ 
  & & $U \subset \overline{A^+ \cup A^-}$ if $\delta > T_{s-1}$ and $d \geq 2$ & \\ \hline
\end{tabular}
\end{center}

\vspace{1mm}
\begin{center}
\begin{tabular}{|c|c|c|c|} \hline
 Case 4 & $l < \alpha_0$ & $l = \alpha_0$ & $l > \alpha_0$ \\ \hline
 $\delta > d$ & $\delta < D_l$ & $\delta = D_l$ & $\delta > D_l$ \\
 ( \& $\gamma > 0$) & $U \subset A^+$ if $\lambda_l \in \mathbb{N}$ & $U \subset A^+$ if $\delta = T_k$ and $d \geq 2$ & $U \subset A^-$ \\ 
 & & $U \subset A^-$ if $\delta = T_{k-1}$ and $d \geq 2$ & \\ 
 & & $U \subset \overline{A^+ \cup A^-}$ if $T_{k-1} < \delta < T_k$ and $d \geq 2$ & \\ \hline
\end{tabular}
\end{center}
\vspace{2mm}


\begin{prop}
Let $\delta > d$.
Then $\gamma > 0$ and so $\alpha_0 > 0$.
\begin{enumerate}
\item 
If $l < \alpha_0$ and  $\lambda_l$ is integer, 
then $\delta < D_l$ and $U \subset A^+$. 
\item 
If $l = \alpha_0$, $d \geq 2$ and $\delta \neq T_k$ for any $k$, 
then $\delta = D_l$, $h = z^{\gamma} w^d$ and $U \subset \overline{A^+ \cup A^-}$.
\item 
If $l > \alpha_0$, 
then $\delta > D_l$ and $U \subset A^-$. 
\end{enumerate}
Moreover,
let $l = \alpha_0$, $d \geq 2$ and $\delta = T_k$ for some $k$.
Then $\delta = D_l$ and $h$ contains $b_{NM} z^N w^M$ and $b_{N^*M^*} z^{N^*} w^{M^*}$.
\begin{enumerate}
\item[(4)] 
If $(\gamma, d) = (N, M)$,
then $U \subset A^+$. 
\item[(5)] 
If $(\gamma, d) = (N^*, M^*)$,
then $U \subset A^-$.
\end{enumerate}
For all the cases,
$p_{\infty}^+$ is an indeterminacy point and
$p_{\infty}^-$ is a superattracting fixed point
if  $U \cap A^+ \neq \emptyset$ and $U \cap A^- \neq \emptyset$,
respectively.
\end{prop}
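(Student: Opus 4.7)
The plan is to mirror the proof of Proposition~\ref{prop for delta > d on proj sp} for the projective case, using three ingredients: (a)~the geometric characterization of $\lambda_l$ as the maximal $y$-intercept of lines with slope $-l^{-1}$ meeting $\{(0,\delta)\} \cup N(q)$, (b)~the weighted trichotomy lemma giving the dynamics of $\tilde f$ on $L_\infty$, and (c)~the conjugacy $f \sim f_0 = (z^\delta, z^\gamma w^d)$ on $U$ provided by Theorem~\ref{main theorem}, which lets me track forward orbits of points in $U$ explicitly.

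First I would observe that in Cases~2, 3 and 4 the hypothesis $\delta > d$ forces $\gamma > 0$, so $\alpha_0 = \gamma/(\delta - d) > 0$. The identity $\alpha_0^{-1}\gamma + d = \delta$ shows that the line through $(0,\delta)$ with slope $-\alpha_0^{-1}$ passes through $(\gamma, d)$, and since $(\gamma, d)$ realizes the maximum of $l^{-1}i + j$ over $N(q)$ for every $l$ in the corresponding interval $\mathcal{I}_f$, comparing slopes yields $D_l > \delta$, $D_l = \delta$ or $D_l < \delta$ according as $l < \alpha_0$, $l = \alpha_0$ or $l > \alpha_0$. For case~(1) the weighted trichotomy lemma applies (since $\lambda_l = D_l \in \mathbb{N}$) and collapses $L_\infty \setminus I_{\tilde f}$ to $p_\infty^+$; since $f(U) \subset U$ by Proposition~\ref{main lemma}(2) and in weighted-projective coordinates the $w$-component of $f_0^n$ dominates because $D_l > \delta$, every orbit in $U$ tends to $p_\infty^+$. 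Case~(3) is symmetric, with collapse to the superattracting fixed point $p_\infty^-$. In case~(2), $\delta = D_l$ together with $\delta \neq T_k$ forces $(\gamma, d)$ to be the unique lattice point of $N(q)$ on the edge $l^{-1}x + y = \delta$, so $h = z^\gamma w^d$; the weighted logarithmic coordinate $\rho = \log|w| - l \log|z|$ iterates under $f_0$ as $\rho \mapsto d\rho$ since the $\log|z|$-coefficient $\gamma + l(d - \delta)$ vanishes precisely when $l = \alpha_0$, so the hypothesis $d \geq 2$ expands both the regimes $\rho > 0$ and $\rho < 0$, sending points of $U$ above the critical surface $|w| = |z|^l$ to $p_\infty^+$ and below it to $p_\infty^-$. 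Cases~(4) and~(5) follow the same scheme with $h$ now a full edge polynomial containing both extreme monomials, the position of $(\gamma,d)$ at $(N,M)$ or $(N^*,M^*)$ determining the direction of collapse. Finally, $N \geq \gamma > 0$ forces $p_\infty^+$ to be indeterminate whenever $U \cap A^+ \neq \emptyset$, while the explicit form of $\tilde f$ near $p_\infty^-$ identifies it as superattracting whenever $U \cap A^- \neq \emptyset$.

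The main obstacle will be case~(2): one must verify that both $U \cap A^+$ and $U \cap A^-$ are nonempty open sets, so that $U$ genuinely sits in the closure of the union of the two basins rather than in just one of them. The $\rho \mapsto d\rho$ dynamics with $d \geq 2$ supplies the expansion, but one must also check in weighted homogeneous coordinates that the two resulting limits land at $p_\infty^\pm$ and not at some other point of $L_\infty$; unlike in ordinary $\mathbb{P}^2$, limits in $\mathbb{P}(r,s,1)$ depend subtly on the weighted representative chosen, and verifying this requires writing $f_0^n(z,w)$ in the weighted homogeneous coordinate $[z:w:1]$ and extracting the dominant direction under the $\sim$-equivalence.
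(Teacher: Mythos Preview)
Your proposal is correct and follows essentially the same route as the paper. The paper itself gives no detailed proof here: it simply states that the weighted-projective proposition is ``obtained by similar arguments as the projective space case,'' supplies the three classification tables for Cases~2, 3 and 4 with $1$ replaced by $l$ and $D$ by $D_l$, and leaves the reader to transport the reasoning from Section~6.1. Your ingredients (a)--(c) are exactly what those tables encode, and your explicit use of the coordinate $\rho = \log|w| - \alpha_0\log|z|$ with $\rho \mapsto d\rho$ under $f_0$ is the natural way to make rigorous the inclusions $U \subset A^{\pm}$ and $U \subset \overline{A^+ \cup A^-}$ that the paper records without justification.

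One small correction: your claim ``$N \geq \gamma > 0$'' has the inequality backwards in general, since $(\gamma,d)$ is itself a term of $h$ when $l = \alpha_0$, forcing $N \leq \gamma$. What you actually need is $N > 0$ whenever $U \cap A^+ \neq \emptyset$, and this follows for a different reason: in the relevant cases the maximal $j$ appearing in $q$ is $m_1 \leq T_1 \leq \delta$ (with equality only when $n_1 = 0$ and $\delta = T_1$, which falls outside the hypothesis $\delta > d$ for the vertex in question), so no term $(0,j)$ can satisfy $l^{-1}\cdot 0 + j = D_l > \delta$ in case~(1), and in cases~(2) and~(4) one has $N = \gamma > 0$ directly. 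This is the content of the paper's remark ``$p_\infty^+$ is always an indeterminacy point'' scattered through the Case~2, 3, 4 discussions.
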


We next exhibit a classification table and a proposition
for the case $\delta \leq d$.

\vspace{2mm}
\begin{center}
\begin{tabular}{|c|c|c|c|} \hline
 Case 2 & $l < l_1$ & $l = l_1$ & $l > l_1$ \\ \hline
  $\delta \leq d$ \& $\gamma > 0$ & $\delta < D_l$ & $\delta < D_l$ & $\delta < D_l$ \\
 $\delta < d$ \& $\gamma = 0$ & $U \subset A^+$ if $\lambda_l \in \mathbb{N}$ & $U \subset A^+$ & $U \subset A^+$ \\ \hline
 $\delta = d$ \& $\gamma = 0$ & $\delta < D_l$ & $\delta = D_l$ & $\delta = D_l$ \\
  & $U \subset A^+$ if $\lambda_l \in \mathbb{N}$ & $U \subset A^+$ & $U \subset \overline{A^+ \cup A^-}$ \\ \hline
\end{tabular}
\end{center}
\vspace{2mm}

\begin{prop}
Let $\delta \leq d$.
Then $(\gamma, d)$ belongs to Case 2.
\begin{enumerate}
\item
If $\gamma > 0$ and  $\lambda$ is integer,
then $\delta < D_l$ and $U \subset A^+$, 
where $p_{\infty}^+$ is an indeterminacy point.
\item
If $\delta < d$, $\gamma = 0$ and  $\lambda_l$ is integer,
then $\delta < D_l$ and $U \subset A^+$, 
where $p_{\infty}^+$ is a superattracting fixed point if $l \geq l_1$ and
an indeterminacy point if $l < l_1$.
\end{enumerate}
Moreover,
let $\delta = d$ and $\gamma = 0$.
Then $\delta = T_1$, and
$\tilde{f}$ is holomorphic if $l \geq l_1$.
\begin{enumerate}
\item[(3)]
If $l < l_1$ and  $\lambda_l$ is integer,
then $\delta < D_l$ and $U \subset A^+$, 
where $p_{\infty}^+$ is an indeterminacy point.
\item[(4)]
If $l = l_1$,
then $\delta = D_l$ and $U \subset A^+$,
where $p_{\infty}^+$ is a superattracting fixed point.
\item[(5)]
If $l > l_1$,
then $\delta = D_l$, $h = z^{\gamma} w^d$ and $U \subset \overline{A^+ \cup A^-}$,
where  $p_{\infty}^+$ and $p_{\infty}^-$ are superattracting fixed points.
\end{enumerate}
\end{prop}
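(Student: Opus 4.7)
The plan is to mimic the argument for the preceding proposition on $\mathbb{P}^2$, transported to $\mathbb{P}(r,s,1)$ with $l = s/r$. First, under $\delta \leq d$ neither Case 3 nor Case 4 can occur (both require $\delta > d$), so automatically $(\gamma,d) = (n_1, m_1)$ with $d = m_1$ maximal, confirming the Case 2 assertion.

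Next I would determine, in each subcase, the position of $\delta$ relative to $D_l$ using the geometric characterization of $\lambda_l$ and $D_l$ via slopes of $-l^{-1}$. Since $(\gamma,d)$ is a vertex of $N(q)$, one has $D_l \geq l^{-1}\gamma + d$; hence for $\gamma > 0$ one gets $\delta \leq d < l^{-1}\gamma + d \leq D_l$, proving case (1); for $\gamma = 0$, $\delta < d$ one likewise obtains $\delta < d \leq D_l$, proving case (2); for $\gamma = 0$, $\delta = d = T_1$, comparison with the next vertex $(n_2, m_2)$ shows $D_l > \delta$ iff $l < l_1 = n_2/(m_1 - m_2)$, separating (3) from (4) and (5). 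Whenever $\delta < D_l$ and the integrality hypothesis $\lambda_l \in \mathbb{N}$ holds, the weighted trichotomy lemma collapses $L_\infty - I_{\tilde f}$ to $p_\infty^+$, and combined with the invariance $f(U) \subset U$ from Proposition \ref{main lemma} this gives $U \subset A^+$, completing (1), (2) and (3). The nature of $p_\infty^+$ (indeterminacy vs.~superattracting fixed) is read off from whether $N = 0$: one has $N \geq \gamma > 0$ when $\gamma > 0$; $N = 0$ when $\gamma = 0$ and $l \geq l_1$, since then $w^d$ belongs to $h$; and $N = n_2 > 0$ when $\gamma = 0$ and $l < l_1$.

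For case (4), at $l = l_1$ both $w^d$ and $b_{n_2 m_2} z^{n_2} w^{m_2}$ appear in $h$, so the induced map on $L_\infty$ has $p_\infty^+$ as its only superattracting fixed point, giving $U \subset A^+$. The main obstacle is (5), where $l > l_1$ forces $h = w^d$ and the restriction of $\tilde f$ to $L_\infty$ is the monomial $[z:w:0] \mapsto [z^d : w^d : 0]$, for which both $p_\infty^{\pm}$ are superattracting fixed. To see that both $U \cap A^+$ and $U \cap A^-$ are nonempty, I would invoke the B\"ottcher conjugacy of Theorem \ref{main theorem} to reduce to $f_0(z,w) = (z^d, w^d)$ and work in the local chart at $p_\infty^+$ in which the induced dynamics is $(Z,T) \mapsto (Z^d, T^d)$ with $Z = z/w^{1/l}$ and $T = w^{-1/s}$. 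The key observation is that because $l > l_1$, the quantity $|Z|$ takes values both below and above $1$ on $U$; the subset with $|Z| < 1$ is attracted to $p_\infty^+$, while the subset with $|Z| > 1$ escapes toward $p_\infty^-$ (verified by the symmetric chart at $p_\infty^-$), so $U$ meets both basins.
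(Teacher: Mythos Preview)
Your proposal is correct and follows essentially the same approach as the paper. The paper does not give a separate proof of this proposition at all: it simply remarks that the weighted-projective case is obtained from the $\mathbb{P}^2$ case by replacing $1$ with $l$ in the comparisons with $\alpha_0$ or $l_1$, replacing $D$ with $D_l$, and adding the integrality condition $\lambda_l \in \mathbb{N}$ when $l < \alpha_0$ or $l < l_1$, then records the outcome in a table. Your sketch spells out this transport in more detail than the paper itself does, including the explicit verification that $D_l > \delta$ iff $l < l_1$ when $\gamma = 0$ and $\delta = d$, and the explicit check of whether $N=0$ in each subcase.
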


\section{Proof of main theorem}

Theorem \ref{main theorem} follows from Proposition \ref{main lemma}
by almost the same arguments as in \cite{ueno SA},
which are described again for the completeness. 
We first prove that
the composition $\phi_n = f_0^{-n} \circ f^n$ is well defined on $U$ in Section 7.1 and
converges uniformly to $\phi$ on $U$ if $d \geq 2$ in Section 7.2.
To prove the convergence,
we lift $f$ by the exponential product. 
We next prove that $\phi_n$ converges uniformly to $\phi$ on $U$ 
even if $d = 1$ and $\delta \neq T_k$ for any $k$ in Section 7.3.
To prove this,
we need more precise estimates.
Example 7.5 in \cite{ueno SA} shows that 
we cannot remove the condition $\delta \neq T_k$ for any $k$.
Finally,
we prove that $\phi$ is injective on $U$ in Section 7.4.
In Sections 7.3 and 7.4
we need to adapt the definition of $M$ and regions 
to the case of  polynomial skew products.

\subsection{Well definedness of $\phi_n$}

Thanks to Proposition \ref{main lemma},
we may write 
\[
p(z) = a_{\delta} z^{\delta} \{ 1 + \zeta (z) \} \text{ and } 
q(z,w) = b_{\gamma d} z^{\gamma} w^d \{ 1 + \eta (z,w) \},
\]
where $\zeta$ and $\eta$ are holomorphic on $U$ and
converge to $0$ on $U$ as $R \to \infty$.
We assume that $a_{\delta} = 1$ and $b_{\gamma d} = 1$ for simplicity.
Then the first and second components of $f^n$ are written as
\begin{align*}
& z^{\delta^n} \prod_{j = 1}^{n} \{ 1 + \zeta (p^{j - 1} (z)) \}^{\delta^{n-j}}
\text{ and} \\ 
& z^{\gamma_n} w^{d^n} 
\prod_{j = 1}^{n - 1} \{ 1 + \zeta (p^{j - 1} (z)) \}^{\gamma_{n-j}} 
\prod_{j = 1}^{n} \{ 1 + \eta (f^{j - 1} (z,w)) \}^{d^{n-j}}, 
\end{align*}
where $\gamma_n = \sum_{j=1}^{n} \delta^{n-j} d^{j-1} \gamma$.
We remark that the coefficients of the dominant terms
$z^{\delta^n}$ and $z^{\gamma_n} w^{d^n}$ are exactly
$a_{\delta}^{\delta^{n-1} + \cdots + \delta + 1}$ and
$a_{\delta}^{\gamma_{n-1} + \cdots + \gamma_2 + \gamma_1} 
  b_{\gamma d}^{d^{n-1}  + \cdots + d + 1}$,
respectively. 

Since
$f_0^{-n} (z,w) = (z^{1 / \delta^n}, z^{- \gamma_n / \delta^n d^n} w^{1 / d^n})$,
we can define $\phi_n$ as 
\[ 
\left( z \cdot \prod_{j = 1}^{n} \sqrt[\delta^j]{1 + \zeta (p^{j - 1} (z))},
w \cdot \prod_{j = 1}^{n} \frac{\sqrt[d^j]{1 + \eta (f^{j - 1} (z,w))}}
{\sqrt[(\delta d)^j]{\{ 1 + \zeta (p^{j - 1} (z)) \}^{\gamma_j}}} \right),
\]
which is well defined and so holomorphic on $U$.

\subsection{Uniform convergence of $\phi_n$ when $d \geq 2$}

In order to prove the uniform convergence of $\phi_n$,
we lift $f$ and $f_0$ to $F$ and $F_0$ 
by the exponential product $\pi (z,w) = (e^z, e^w)$; 
that is, 
$\pi \circ F = f \circ \pi$ and $\pi \circ F_0 = f_0 \circ \pi$.
More precisely,
we define
\[
F(Z, W) = (P(Z), Q(Z,W)) 
= \left( \delta Z + \log \{ 1 + \zeta (e^Z) \},
\gamma Z + dW + \log \{ 1 + \eta (e^Z, e^W) \} \right)
\]
and $F_0 (Z,W) = (\delta Z, \gamma Z + d W)$.
By Proposition \ref{main lemma},
we may assume that
$\| F - F_0 \| < \tilde{\varepsilon} \text{ on } \pi^{-1} (U)$,
where $||(Z,W)|| = \max \{ |Z|, |W| \}$ and $\tilde{\varepsilon} = \log (1 + \varepsilon)$.
Similarly, 
we can lift $\phi_n$ to $\Phi_n$ so that 
the equation $\Phi_n = F_0^{-n} \circ F^n$ holds; thus, for any $n \geq 1$, 
\[
\Phi_n (Z, W) = \left( \frac{1}{\delta^n} P_n(Z), 
\frac{1}{d^n} Q_n(Z,W) - \frac{\gamma_n}{\delta^n d^n} P_n(Z) \right), 
\] 
where $(P_n(Z),Q_n(Z,W)) = F^n(Z,W)$.
Let $\Phi_n = (\Phi_n^1, \Phi_n^2)$. 
Then
\begin{align*}
|\Phi_{n+1}^1 - \Phi_n^1|
&= \left| \frac{P_{n+1}}{\delta^{n+1}} - \frac{P_n}{\delta^n} \right|
= \frac{|P_{n+1} - \delta P_n|}{\delta^{n+1}} 
< \frac{1}{\delta^{n+1}} \tilde{\varepsilon}
\text{ and } \\
|\Phi_{n+1}^2 - \Phi_n^2|
&= \left| \left\{ \frac{Q_{n+1}}{d^{n+1}} - 
 \frac{\gamma_{n+1} P_{n+1}}{\delta^{n+1} d^{n+1}} \right\}
 - \left\{ \frac{Q_n}{d^n} - \frac{\gamma_n P_n}{\delta^n d^n} \right\} \right| \\
&= \left| \frac{Q_{n+1}}{d^{n+1}} 
 - \frac{\gamma P_n}{d^{n+1}} - \frac{Q_n}{d^n} \right| 
 + \left| \frac{\gamma_{n+1} P_{n+1}}{\delta^{n+1} d^{n+1}}  
 - \frac{\gamma_n P_n}{\delta^n d^n} - \frac{\gamma P_n}{d^{n+1}} \right| \\
&= \frac{|Q_{n+1} - (\gamma P_n + d Q_n)|}{d^{n+1}} 
 + \frac{\gamma_{n+1} |P_{n+1} - \delta P_n|}{\delta^{n+1} d^{n+1}} 
< \frac{1}{d^{n+1}} \tilde{\varepsilon} + \frac{\gamma_{n+1}}{\delta^{n+1} d^{n+1}} \tilde{\varepsilon}.
\end{align*}
Hence $\Phi_n$ converges uniformly to $\Phi$ if $d \geq 2$.
In particular, 
\begin{align*}
\| \Phi - id \| & < \max \left\{ \frac{1}{\delta - 1}, 
\frac{1}{d-1} + \frac{\gamma}{\delta - d} 
\left( \frac{1}{d - 1} - \frac{1}{\delta - 1} \right) \right\} \tilde{\varepsilon}
\text{ if } \delta \neq d, \text{ and} \\
\| \Phi - id \| & < \left\{ \frac{1}{d - 1} + \frac{\gamma}{(d-1)^2} \right\} \tilde{\varepsilon}
\text{ if } \delta = d.
\end{align*}
By the inequality $|e^{z_1}/e^{z_2} - 1| \leq |z_1 - z_2| e^{|z_1 - z_2|}$,
the uniform convergence of $\Phi_n$ induces that of $\phi_n$. 
Therefore,
$\phi$ is holomorphic on $U$.
In particular, if $||\Phi - id|| < \varepsilon$, 
then $|\phi_1 - z| < \varepsilon e^{\varepsilon} |z|$ and 
$|\phi_2 - w| < \varepsilon e^{\varepsilon} |w|$,
where $\phi = (\phi_1, \phi_2)$.
Hence $\phi \sim id$ on $U$ as $R \to \infty$.

\subsection{Uniform convergence of $\phi_n$ when $d = 1$}

We have proved the invariance of $U$ in Proposition \ref{main lemma}.
More strongly,
$f^n$ contracts $U$ rapidly. 

\begin{lem}\label{lem1: d = 1}
If $d = 1$ and $\delta \neq T_k$ for any $k$, 
then $f^n(U_R) \subset U_{2^n R}$ for large $R$.
\end{lem}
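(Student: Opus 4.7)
The plan is to argue by induction on $n$, reducing the problem to a quantitative one-step contraction. The base case $n=0$ is trivial. For the inductive step, it suffices to find $R_0$ large enough that $f(U_{\rho}) \subset U_{2\rho}$ holds for every $\rho \geq R_0$: once this is established, starting from $R \geq R_0$ and iterating yields $f^n(U_R) \subset U_{2^n R}$, since $2^n R \geq R_0$ persists at every step.

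To prove the one-step contraction $f(U_\rho) \subset U_{2\rho}$ for large $\rho$, I would revisit the invariance arguments in the proofs of Proposition \ref{main lemma} for each case and upgrade them from invariance to a growth bound by the factor $2$ in the defining parameter. The key observation is that when $d=1$ and $\delta \neq T_k$ for any $k$, the exponent estimates appearing in those proofs are \emph{strictly} stronger than what is needed merely for invariance. For instance, in Case 2 we have $\tilde{\gamma} = \gamma + l_1 d - l_1 \delta > 0$ (since $\delta < T_1$), and the proof produces $|q(z,w)/p(z)^{l_1}| \geq C \rho^{1+\tilde{\gamma}}$ on $U_\rho$; because $\tilde{\gamma} > 0$, this exceeds $2\rho$ as soon as $\rho$ is large enough. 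Similarly $|p(z)| \geq (1-\varepsilon)|z|^\delta \geq (1-\varepsilon)\rho^\delta$ exceeds $2\rho$ for large $\rho$ since $\delta \geq 2$. Together these give $f(z,w) \in U_{2\rho}$.

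For Cases 1, 3, and 4 the same structural argument applies. In Case 1 the condition $\delta \neq T_k$ is vacuous; a direct estimate from $q \sim z^\gamma w$ with $\gamma \geq 1$ gives $|q| \geq C \rho^{\gamma+1}$, which exceeds $2\rho$ for large $\rho$. In Case 3, the hypothesis $\delta > T_{s-1}$ makes the exponent $1 + l_2^{-1}\{\delta - (l_2^{-1}\gamma + d)\}$ strictly larger than $1$, giving a bound of the form $C\rho^{1+\alpha}$ with $\alpha > 0$. In Case 4 one uses both strict inequalities $\delta > T_{k-1}$ and $\delta < T_k$ --- guaranteed simultaneously by $\delta \neq T_k$ for any $k$ --- to obtain strict positivity in both invariance inequalities controlling $|p^{1 + l_1 l_2^{-1}}/q^{l_2^{-1}}|$ and $|q/p^{l_1}|$.

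The main technical point, which is really a verification rather than a difficulty, is that the improved bound $f(U_\rho) \subset U_{2\rho}$ must hold \emph{uniformly} for all $\rho \geq R_0$, not only at the starting radius. This is automatic from the monotonicity of $\rho \mapsto C\rho^{1+\alpha}/\rho = C\rho^\alpha$ when $\alpha > 0$: once the contraction holds at some $R_0$, it holds at every $\rho \geq R_0$, so the induction closes without further adjustment.
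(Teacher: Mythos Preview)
Your proposal is correct and follows essentially the same approach as the paper: establish the one-step contraction $f(U_\rho)\subset U_{2\rho}$ for all large $\rho$ by sharpening the invariance inequalities from the proof of Proposition~\ref{main lemma} (using that $d=1$ and $\delta\neq T_k$ force the relevant exponents to be \emph{strictly} greater than $1$), and then iterate. The only cosmetic difference is that the paper treats Case~4 explicitly and declares the other cases to be covered by the same argument, whereas you walk through each case separately; your added remark on the uniformity in $\rho$ makes explicit what the paper leaves implicit in the phrase ``$f(U_R)\subset U_{2R}$ and so $f^n(U_R)\subset U_{2^nR}$.''
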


\begin{proof}
It is enough to show the lemma for Case 4.
We first give an abstract idea of the proof.
Recall that 
\[
\tilde{f}_2 (t,c) \sim 
( t^{\delta - l_2^{-1} \tilde{\gamma}} c^{l_2^{-1} (\delta - \tilde{d})}, \ t^{\tilde{\gamma}} c^{\tilde{d}} )
\]
on $\{ |t| > R, |c| > R \}$ as $R \to \infty$.
By assumption,
$\delta - l_2^{-1} \tilde{\gamma} > d = 1$,
$\delta - \tilde{d} > 0$,
$\tilde{\gamma} > 0$ and 
$\tilde{d} > d = 1$,
where
$\tilde{\gamma} = \gamma + l_1 d - l_1 \delta$ and
$\tilde{d} = l_2^{-1} \tilde{\gamma} + d$.
If $\tilde{f}_2$ is well defined,
then it is easy to check that
$\tilde{f}_2( \{ |t| > R, |c| > R \} ) \subset \{ |t| > 2R, |c| > 2R \}$ and so
$\tilde{f}_2^n( \{ |t| > R, |c| > R \} ) \subset \{ |t| > 2^n R, |c| > 2^n R \}$.

This idea provides a proof immediately.
Actually,
\begin{align*}
& \left| \frac{p(z)^{1 + l_1 l_2^{-1}}}{q(z,w)^{l_2^{-1}}} \right| 
> C_1 \left| t^{\delta - l_2^{-1} \tilde{\gamma}} c^{l_2^{-1} (\delta - \tilde{d})} \right|
> C_1 |t|^{\delta - l_2^{-1} \tilde{\gamma} - 1} |c|^{l_2^{-1} (\delta - \tilde{d})} \cdot |t|
> 2R 
\text{ and} \\
& \left| \frac{q(z,w)}{p(z)^{l_1}} \right| 
> C_2 \left| t^{\tilde{\gamma}} c^{\tilde{d}} \right|
> C_2 |t|^{\tilde{\gamma}} |c|^{\tilde{d} - 1} \cdot |c| 
> 2R
\end{align*}
for some constants $C_1$ and $C_2$ and for large $R$.
Hence 
$f(U_R) \subset U_{2R}$ and so 
$f^n(U_R) \subset U_{2^n R}$.
\end{proof}

Let $M = 1$ for Cases 1, 2 and 3 and 
$M = \min \{ \min \{ \tilde{\gamma} - \tilde{i} \ | \, \tilde{\gamma} >  \tilde{i} \text{ and } b_{ij} \neq 0 \}, 1 \}$ 
for Case 4.
Then $0 < M \leq 1$.

\begin{lem}\label{lem2: d = 1}
If $d = 1$ and $\delta \neq T_k$ for any $k$, 
then 
\[
\left| \zeta (p^n(z)) \right| < \dfrac{C_1}{2^n R} \text{ and } |\eta (f^n(z,w))| < \dfrac{C_2}{( 2^n R )^M}
\]
on $U$
for some constants $C_1$ and $C_2$ and for large $R$.
\end{lem}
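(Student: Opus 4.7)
\textbf{Proof plan for Lemma \ref{lem2: d = 1}.} The estimate on $\zeta$ is the easier half. Writing
\[
\zeta(z) = \frac{p(z) - z^{\delta}}{z^{\delta}} = \sum_{k < \delta} a_k z^{k - \delta},
\]
the absence of a constant term gives a uniform bound $|\zeta(z)| \leq C_1/|z|$ for $|z|$ large. By Lemma \ref{lem1: d = 1}, $f^n(z,w) \in U_{2^n R}$ whenever $(z,w) \in U_R$; since every point of $U_{2^n R}$ has first coordinate of modulus exceeding $2^n R$ (this is built into the definition of $U_R$ in all four cases), this yields $|p^n(z)| > 2^n R$ and hence $|\zeta(p^n(z))| < C_1/(2^n R)$.

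For $\eta$ the idea is to re-run the bounds already obtained in the proofs of Proposition \ref{main lemma}. Working in the coordinate frame appropriate to each case---$(z,c)$ with $|w|=|z^{l_1}c|$ in Case 2, $(t,w)$ with $|z| = |tw^{l_2^{-1}}|$ in Case 3, and $(t,c)$ with $|z| = |tc^{l_2^{-1}}|$, $|w| = |t^{l_1}c^{1+l_1 l_2^{-1}}|$ in Case 4---$U_R$ becomes the bidisc exterior in those coordinates, and those sections produce an estimate of the shape
\[
|\eta(z,w)| \leq \sum_{(i,j) \neq (\gamma, d)} \frac{|b_{ij}|}{\text{coord}_1^{A_{ij}}\,\text{coord}_2^{B_{ij}}},
\]
where $(A_{ij}, B_{ij})$ equals $(\tilde{\gamma}-\tilde{i},\,d-j)$ in Case 2, equals $(\gamma-i,\,l_2^{-1}(\gamma-i)+(d-j))$ in Case 3, and equals $(\tilde{\gamma}-\tilde{i},\,\tilde{d}-\tilde{j})$ in Case 4. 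In all cases both exponents are nonnegative and at least one is strictly positive for each nonzero term.

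The one thing to check is that $A_{ij} + B_{ij} \geq M$ uniformly, so that each summand is at most $|b_{ij}|/R^M$ on $U_R$. In Cases 1, 2, 3 the nonzero exponent is a positive integer, so $A_{ij} + B_{ij} \geq 1 = M$. In Case 4, if $\tilde{\gamma} > \tilde{i}$ then $A_{ij} = \tilde{\gamma} - \tilde{i} \geq M$ by the very definition of $M$; if instead $\tilde{\gamma} = \tilde{i}$ then $B_{ij} = \tilde{d} - \tilde{j} = d - j$ is a positive integer (forced by $(i,j) \neq (\gamma,d)$ together with the maximality statement in Corollary \ref{cor for second blow-up}), so $B_{ij} \geq 1 \geq M$. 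Summing the finitely many nonzero terms gives $|\eta(z,w)| \leq C_2/R^M$ on $U_R$, and applying this at $f^n(z,w) \in U_{2^n R}$ finishes the proof.

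The only delicate point is the Case 4 step: one must check that no term of the sum can evade the bound $M$. This is exactly the worst-case that the definition of $M$ was tailored to absorb, so once the formulas for $A_{ij}, B_{ij}$ are in hand, the verification is essentially automatic. The rest is a direct substitution of Lemma \ref{lem1: d = 1}.
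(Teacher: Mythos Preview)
Your argument is correct and follows essentially the same route as the paper: bound $|\zeta|\le A/|z|$ and feed in $|p^n(z)|>2^nR$ from Lemma \ref{lem1: d = 1}, then rewrite $|\eta|$ in the auxiliary coordinates so that on $U_{R'}$ each term is dominated by $|b_{ij}|/(R')^{M}$, and apply this at $R'=2^nR$. The paper treats only Case 4 (the other cases being simpler) and packages the $\eta$-bound as $|\eta|\le B/|t|^{M}+C/|c|$ before invoking $M\le 1$, whereas you bound directly via $A_{ij}+B_{ij}\ge M$; these are cosmetically different but equivalent.
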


\begin{proof}
It is enough to consider Case 4.
There is a constant $A$ such that
$|\zeta| \leq A/|z|$.
Hence 
$|\zeta (p^n)| \leq A/|p^n| \leq A/(2^n R)$ on $U$
by Lemma \ref{lem1: d = 1}.
Let $|w| = |z^{l_1}c|$ and $|z| = |tc^{l_2^{-1}}|$.
Then 
\[
|\eta(z,w)|
= \left| \sum \frac{b_{ij} z^{i} w^{j}}{z^{\gamma} w} \right| 
\leq \sum \frac{|b_{ij}|}{ |t|^{\tilde{\gamma} - \tilde{i}} |c|^{\tilde{d} - \tilde{j}} },
\]
where the sum is taken over all $(i,j) \neq (\gamma, d)$
such that $b_{ij} \neq 0$.
Recall that
$\tilde{\gamma} \geq \tilde{i}$ and $\tilde{d} \geq \tilde{j}$.
More precisely,
$\tilde{\gamma} - \tilde{i} \geq M$ if $\tilde{\gamma} > \tilde{i}$, and 
$\tilde{d} - \tilde{j} = d - j \geq 1$ if $\tilde{\gamma} = \tilde{i}$.
Hence there are constants $B$ and $C$ such that
$|\eta| \leq B/ |t|^M + C/ |c|$
and so $|\eta| \leq B/ |t|^M + C/ |c|^M$.
It then follows from Lemma \ref{lem1: d = 1} that 
$|\eta (f^n)| < (B + C)/(2^n R)^M$ on $U$.
\end{proof}

Let $d = 1$ and $\delta \neq T_k$ for any $k$.
By Lemma \ref{lem2: d = 1},
\begin{align*}
|\Phi_{n+1}^2 - \Phi_n^2|
& \leq \frac{|Q(F^n) - Q_0(F^n)|}{d^{n+1}} 
+ \frac{\gamma_{n+1} |P(P^n) - P_0(P^n)|}{\delta^{n+1} d^{n+1}} \\
& \leq |\eta \circ \pi (F^n)| + \frac{\gamma}{\delta - 1} |\zeta \circ \pi (P^n)|
< \left( C_2 + \frac{\gamma}{\delta - 1} C_1 \right) \left( \frac{1}{2^n R} \right)^M
\end{align*}
on $\pi^{-1} (U)$.
Hence $\Phi_n$ converges uniformly to $\Phi$, 
which induces the uniform convergence of $\phi_n$ to $\phi$. 
Therefore,
$\phi$ is holomorphic on $U$ and
$\phi \sim id$ on $U$ as $R \to \infty$.

\subsection{Injectivity of $\phi$}

We prove that,
after enlarging $R$ if necessary,
the lift $F$ is injective on $\pi^{-1} (U)$.
Hence $F^n$, $\Phi_n$ and $\Phi$ are injective on the same region.
The injectivity of $\Phi$ induces that of $\phi$ 
because $\phi \sim id$ on $U$ as $R \to \infty$.
 
It is enough to consider Case 4. 
In that case,
$F$ is holomorphic on $\pi^{-1} (U)$,
where 
\[
\pi^{-1} (U)
= \left\{ l_1 \mathrm{Re} Z + \log R < \mathrm{Re} W < (l_1 + l_2) \mathrm{Re} Z - l_2 \log R \right\}.
\]
In particular,
$P$ is holomorphic and $|P- \delta Z| < \tilde{\varepsilon}$ on 
$\{ Z \ | \ \mathrm{Re} Z > ( 1 + l_2^{-1} ) \log R \}$.
Rouch\'e's Theorem guarantees the injectivity of $P$ on a smaller region.
In fact,
the same argument as the proof of Proposition 6.1 in \cite{ueno SA} implies the following.

\begin{prop}\label{biholo of lift}
The function
$P$ is injective on 
\[
\left\{ Z \ \Big| \ \mathrm{Re} Z > \left( 1 + \dfrac{1}{l_2} \right) \log R +  \dfrac{2 \tilde{\varepsilon}}{\delta} \right\}.
\]
\end{prop}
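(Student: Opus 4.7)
The plan is to use the standard Rouch\'e argument to deduce injectivity of $P$ from the bound $|P(Z) - \delta Z| < \tilde{\varepsilon}$ on the holomorphic domain $\{\mathrm{Re} Z > (1+l_2^{-1})\log R\}$. Write $h(Z) := P(Z) - \delta Z = \log\{1 + \zeta(e^Z)\}$, so that $|h| < \tilde{\varepsilon}$ on that half-plane.

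First I would establish an a priori closeness estimate: if $Z_1, Z_2$ lie in the domain and $P(Z_1) = P(Z_2)$, then
\[
|\delta(Z_1 - Z_2)| = |h(Z_2) - h(Z_1)| \leq 2\tilde{\varepsilon},
\]
so $|Z_1 - Z_2| \leq 2\tilde{\varepsilon}/\delta$. Thus any two points with equal image are automatically within distance $2\tilde{\varepsilon}/\delta$, and it suffices to prove local injectivity on disks of radius slightly larger than this.

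Next I would fix $Z_1$ with $\mathrm{Re} Z_1 > (1+1/l_2)\log R + 2\tilde{\varepsilon}/\delta$ and, for any $r > 2\tilde{\varepsilon}/\delta$ small enough that the closed disk $\overline{D(Z_1,r)}$ is still contained in $\{\mathrm{Re} Z > (1+1/l_2)\log R\}$, compare the two holomorphic functions $g(Z) := P(Z) - P(Z_1)$ and $g_0(Z) := \delta(Z - Z_1)$ on $\partial D(Z_1,r)$. On that boundary,
\[
|g(Z) - g_0(Z)| = |h(Z) - h(Z_1)| \leq 2\tilde{\varepsilon} < \delta r = |g_0(Z)|,
\]
so Rouch\'e's Theorem applies and $g$ has exactly one zero in $D(Z_1,r)$, namely $Z_1$. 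Since any hypothetical second preimage $Z_2$ satisfies $|Z_1 - Z_2| \leq 2\tilde{\varepsilon}/\delta < r$, it must coincide with $Z_1$, giving the desired injectivity.

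The only real obstacle is bookkeeping the domain: one must verify that the chosen disk $\overline{D(Z_1,r)}$ sits inside the holomorphy half-plane $\{\mathrm{Re} Z > (1+1/l_2)\log R\}$ of $P$, which is exactly why the statement requires the extra buffer $2\tilde{\varepsilon}/\delta$ in the real part of $Z_1$. Taking $r$ to decrease to $2\tilde{\varepsilon}/\delta$ gives the sharp threshold in the proposition. Everything else is a direct transcription of the argument of Proposition 6.1 in \cite{ueno SA}, now in the polynomial-skew-product setting where the bound $|P - \delta Z| < \tilde{\varepsilon}$ comes from Proposition \ref{main lemma} applied on $\pi^{-1}(U)$.
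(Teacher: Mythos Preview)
Your proposal is correct and follows exactly the approach the paper indicates: the paper explicitly says that Rouch\'e's Theorem guarantees the injectivity and that the argument is the same as Proposition 6.1 in \cite{ueno SA}, which is precisely the a priori bound $|Z_1-Z_2|<2\tilde{\varepsilon}/\delta$ followed by the Rouch\'e comparison of $P(Z)-P(Z_1)$ against $\delta(Z-Z_1)$ on a disk of radius $r>2\tilde{\varepsilon}/\delta$ contained in the holomorphy half-plane.
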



Let $Q_Z(W) = Q(Z,W)$ and $H_Z = H \cap (\{ Z \} \times \mathbb{C})$,
where
\[
H =
\left\{ l_1 \mathrm{Re} Z + \log R + \frac{2 \tilde{\varepsilon}}{d} < 
\mathrm{Re} W < (l_1 + l_2) \mathrm{Re} Z - l_2 \log R - \frac{2 \tilde{\varepsilon}}{d} \right\}.
\]
The same argument implies the injectivity of $Q_Z$ on $H_Z$.

\begin{prop}\label{biholo of lift}
The function
$Q_Z$ is injective on $H_Z$ for any fixed $Z$. 
\end{prop}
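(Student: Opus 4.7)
The plan is to mirror the Rouch\'e's theorem argument used in the preceding proposition for $P$ (Proposition~6.1 of \cite{ueno SA}), now applied to $Q_Z$ with $Z$ held fixed. Three inputs are used: from Proposition~\ref{main lemma} one has $|Q_Z(W)-Q_{0,Z}(W)|=|\log\{1+\eta(e^Z,e^W)\}|<\tilde\varepsilon$ throughout $\pi^{-1}(U)_Z$, where $Q_{0,Z}(W):=\gamma Z+dW$; the strip $H_Z$ is obtained from $\pi^{-1}(U)_Z$ by shrinking its real-part range by $2\tilde\varepsilon/d$ on each side, so for every $W_1\in H_Z$ the closed disk $\overline{D(W_1,2\tilde\varepsilon/d)}$ lies in $\pi^{-1}(U)_Z$, where $Q_Z$ is holomorphic; and $Q_{0,Z}$ is affine with slope $d\ne 0$.

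Fix $W_1\in H_Z$ and set $g(W):=Q_Z(W)-Q_Z(W_1)$ and $g_0(W):=d(W-W_1)$. On the circle $\partial D(W_1,2\tilde\varepsilon/d)$ one has $|g_0|=2\tilde\varepsilon$, while
\[
|g-g_0|=\bigl|\log\{1+\eta(e^Z,e^W)\}-\log\{1+\eta(e^Z,e^{W_1})\}\bigr|<2\tilde\varepsilon,
\]
the strict inequality following because each of the two logarithms has modulus strictly less than $\tilde\varepsilon$ everywhere on $\pi^{-1}(U)$. Rouch\'e's theorem then gives that $g$ has the same number of zeros inside $D(W_1,2\tilde\varepsilon/d)$ as $g_0$, and since $g_0$ has a unique zero at $W_1$, so does $g$; hence $W_1$ is the only preimage of $Q_Z(W_1)$ inside $D(W_1,2\tilde\varepsilon/d)$.

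To finish global injectivity on $H_Z$, suppose $W_2\in H_Z$ also satisfies $Q_Z(W_2)=Q_Z(W_1)$. Subtracting the defining formulas gives $d(W_1-W_2)=\log\{1+\eta(e^Z,e^{W_2})\}-\log\{1+\eta(e^Z,e^{W_1})\}$, whence $d|W_1-W_2|<2\tilde\varepsilon$, i.e.\ $W_2\in D(W_1,2\tilde\varepsilon/d)$; the Rouch\'e step then forces $W_2=W_1$. I expect no genuine obstacle: the only point that needs care is the strict inequality $|g-g_0|<|g_0|$ on the entire circle, which follows uniformly from the strict pointwise bound $|\log\{1+\eta\}|<\tilde\varepsilon$, together with the fact that the closed disk of radius $2\tilde\varepsilon/d$ around any $W_1\in H_Z$ lies in the holomorphicity domain of $Q_Z$ — precisely the reason the margin $2\tilde\varepsilon/d$ was built into the definition of $H$.
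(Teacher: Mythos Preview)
Your argument is correct and is precisely the Rouch\'e's theorem argument the paper invokes by saying ``the same argument implies the injectivity of $Q_Z$ on $H_Z$'' (referring back to the argument for $P$, i.e.\ Proposition~6.1 of \cite{ueno SA}). You have filled in exactly the details the paper leaves implicit: the comparison of $g(W)=Q_Z(W)-Q_Z(W_1)$ with the linear map $g_0(W)=d(W-W_1)$ on the circle of radius $2\tilde\varepsilon/d$, and the observation that the margin $2\tilde\varepsilon/d$ built into $H$ forces any second preimage into that disk.
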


Note that $H \subset \left\{ \mathrm{Re} Z > \left( 1 + \dfrac{1}{l_2} \right) \log R + \dfrac{4 \tilde{\varepsilon}}{l_2 d} \right\}$
and let $C = \max \left\{ \dfrac{1}{d}, \dfrac{l_2}{2 \delta} \right\}$.

\begin{cor}
The maps $F$, $F^n$, $\Phi_n$ and $\Phi$ are injective on
\[
\left\{ l_1 \mathrm{Re} Z + \log R + 2 C \tilde{\varepsilon}
 < \mathrm{Re} W < (l_1 + l_2) \mathrm{Re} Z - l_2 \log R - 2 C \tilde{\varepsilon} \right\}.
\]
\end{cor}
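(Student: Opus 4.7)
The plan is to establish the corollary in three logical steps: first, deduce the injectivity of $F$ on $V$ from the skew product structure and the two preceding propositions; second, show forward invariance $F(V) \subset V$ and iterate to obtain injectivity of $F^n$; third, transfer injectivity from $F^n$ to $\Phi_n$ and then to the uniform limit $\Phi$.

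For the first step, the region $V$ is designed so that both of the preceding propositions on $P$ and $Q_Z$ apply on $V$. Indeed, since $C \geq 1/d$, the bound $2C\tilde{\varepsilon} \geq 2\tilde{\varepsilon}/d$ gives $V \subset H$, so each vertical slice $V_Z$ is contained in $H_Z$ and $Q_Z$ is injective on $V_Z$. Combining the two bounding inequalities defining $V$ yields $\mathrm{Re}\,Z > (1 + 1/l_2)\log R + (4C/l_2)\tilde{\varepsilon}$, and the inequality $C \geq l_2/(2\delta)$ forces $(4C/l_2)\tilde{\varepsilon} \geq 2\tilde{\varepsilon}/\delta$, so $Z$ lies in the region where $P$ is injective. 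Since $F = (P(Z), Q(Z,W))$ is a skew product, $F(Z_1,W_1) = F(Z_2,W_2)$ with both points in $V$ forces $Z_1 = Z_2$ via $P$-injectivity, and then $W_1 = W_2$ via $Q_{Z_1}$-injectivity.

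For the second step, the key computation uses $\|F - F_0\| < \tilde{\varepsilon}$ together with $F_0(Z,W) = (\delta Z, \gamma Z + dW)$. Applying $F$ to $(Z,W) \in V$ increases the quantity $\mathrm{Re}\,W - l_1 \mathrm{Re}\,Z$ by a factor of $d$ plus a nonnegative drift $\tilde{\gamma}\,\mathrm{Re}\,Z$, where $\tilde{\gamma} = \gamma + l_1 d - l_1 \delta \geq 0$, minus an $O(\tilde{\varepsilon})$ error; the symmetric estimate handles the upper boundary. For $d \geq 2$ the factor $d$ already absorbs the perturbation, and for $d = 1$ the drift term $\tilde{\gamma}\,\mathrm{Re}\,Z > 0$ (guaranteed by $\delta \neq T_k$) suffices, so $F(V) \subset V$ for large $R$. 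Iterating, $F^n\colon V \to V$ is the composition of injective maps, hence injective on $V$. Since $F_0^{-n}$ is a fixed branch of an affine map and hence globally injective, $\Phi_n = F_0^{-n} \circ F^n$ is injective on $V$. The uniform limit $\Phi = \lim \Phi_n$ on $V$ was established in Sections 7.2 and 7.3, and since $\Phi \sim \mathrm{id}$ on $V$ as $R \to \infty$ its complex Jacobian is close to the identity matrix, so applying the several-variables Hurwitz-type argument from \cite{ueno SA} to the sequence of injective maps $\Phi_n$ rules out the degenerate alternative and yields injectivity of $\Phi$.

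I expect the second step, the forward invariance $F(V) \subset V$, to be the main obstacle. The strip $V$ is narrower than $\pi^{-1}(U)$ by the offset $2C\tilde{\varepsilon}$, rather than the smaller offset $2\tilde{\varepsilon}/d$ appearing in $H$, so the perturbation error $\tilde{\varepsilon}$ and the additional shrinkage $2C\tilde{\varepsilon}$ must be controlled simultaneously by either the factor $d - 1$ or the drift $\tilde{\gamma}\,\mathrm{Re}\,Z$. The most delicate case is $d = 1$, where $(d - 1)\log R$ vanishes and one must rely entirely on the strict positivity of $\tilde{\gamma}$ provided by the hypothesis $\delta \neq T_k$ for any $k$.
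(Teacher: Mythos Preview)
Your proof is correct and follows essentially the paper's strategy: establish injectivity of $F$ on a region that is forward invariant, then iterate and pass to the limit. The one difference in execution is that you prove the forward invariance $F(V)\subset V$ directly from the linear estimate $\|F-F_0\|<\tilde\varepsilon$, whereas the paper leans on the invariance $f(U)\subset U$ already established in Proposition~\ref{main lemma} and simply remarks that ``after enlarging $R$ if necessary'' one obtains $F$ injective on $\pi^{-1}(U)$ itself (so that the iterates are automatically injective there, hence on the subregion $V$). Your direct computation of the invariance of $V$---with the drift term $\tilde\gamma\,\mathrm{Re}\,Z$ handling the $d=1$ case---is a legitimate and slightly more explicit way to justify the same corollary without the enlarging-$R$ step.
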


As mentioned above,
the injectivity of $\Phi$ induces that of $\phi$. 

\begin{prop}\label{biholo of phi}
The B\"{o}ttcher coordinate $\phi$ is injective on 
\[
\left\{ 
(1 + \varepsilon)^{2C} R |z|^{l_1} < |w| < \frac{1}{(1 + \varepsilon)^{2C} R^{l_2}} |z|^{l_1 + l_2}  
\right\}.
\]
\end{prop}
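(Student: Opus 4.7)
The plan is to descend the injectivity of $\Phi$ established in the preceding corollary to that of $\phi$ via the exponential covering $\pi(Z,W) = (e^Z, e^W)$. I first want to observe that the region in the statement coincides with the $\pi$-image of the region in the corollary. Writing $\log|z| = \mathrm{Re}\, Z$, $\log|w| = \mathrm{Re}\, W$, and $\tilde{\varepsilon} = \log(1+\varepsilon)$, the double inequality $(1+\varepsilon)^{2C} R |z|^{l_1} < |w| < (1+\varepsilon)^{-2C} R^{-l_2}|z|^{l_1+l_2}$ takes logs to precisely $l_1 \mathrm{Re}\, Z + \log R + 2C\tilde{\varepsilon} < \mathrm{Re}\, W < (l_1+l_2) \mathrm{Re}\, Z - l_2 \log R - 2C\tilde{\varepsilon}$, which is the set in the corollary.

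Second, I would establish the equivariance $\Phi(Z + 2\pi i m, W + 2\pi i n) = \Phi(Z, W) + 2\pi i (m,n)$ for all $(m,n) \in \mathbb{Z}^2$. From the definition of $F$, since $e^{Z+2\pi i} = e^Z$ and $e^{W+2\pi i} = e^W$, one checks that $F(Z+2\pi i, W) = F(Z,W) + 2\pi i (\delta,\gamma)$ and $F(Z, W+2\pi i) = F(Z,W) + 2\pi i (0,d)$. A short induction using $\gamma_{n+1} = \delta \gamma_n \cdot 0 + \gamma \delta^n + d \gamma_n$ gives $F^n(Z+2\pi i, W) = F^n(Z,W) + 2\pi i (\delta^n, \gamma_n)$ and $F^n(Z, W+2\pi i) = F^n(Z,W) + 2\pi i (0, d^n)$. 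Applying $F_0^{-n}(X,Y) = (X/\delta^n,\ Y/d^n - \gamma_n X/(\delta^n d^n))$ to these shifts, the scaling factors cancel exactly and yield $\Phi_n(Z+2\pi i m, W+2\pi i n) = \Phi_n(Z,W) + 2\pi i (m,n)$, a relation inherited by the limit $\Phi$.

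With these two ingredients in hand, the descent is routine. Suppose $(z_1, w_1)$ and $(z_2, w_2)$ lie in the region of the proposition and satisfy $\phi(z_1, w_1) = \phi(z_2, w_2)$. Choose lifts $(Z_j, W_j) \in \pi^{-1}(z_j, w_j)$ whose real parts lie in the corollary's region; this is possible by the first step. Since $\pi \circ \Phi = \phi \circ \pi$, we have $\pi(\Phi(Z_1, W_1)) = \pi(\Phi(Z_2, W_2))$, so $\Phi(Z_1, W_1) - \Phi(Z_2, W_2) = 2\pi i (k_1, k_2)$ for some $(k_1, k_2) \in \mathbb{Z}^2$. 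Set $(Z_2', W_2') = (Z_2 + 2\pi i k_1, W_2 + 2\pi i k_2)$; this point has the same real parts as $(Z_2, W_2)$ and so still lies in the corollary's region, and by the equivariance $\Phi(Z_2', W_2') = \Phi(Z_2, W_2) + 2\pi i (k_1, k_2) = \Phi(Z_1, W_1)$. The corollary then forces $(Z_1, W_1) = (Z_2', W_2')$, and pushing down through $\pi$ gives $(z_1, w_1) = (z_2, w_2)$.

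The main point of care—rather than a genuine obstacle—is verifying the $2\pi i \mathbb{Z}^2$-equivariance of $\Phi$, which relies on the precise matching of $\delta^n$, $d^n$, and $\gamma_n$ in the formulas for $F^n$ and $F_0^{-n}$. Once that is in place, the fact that the region is defined purely by real-part inequalities makes the descent along $\pi$ immediate.
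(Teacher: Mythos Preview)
Your proof is correct and follows the same descent strategy as the paper: pass from the injectivity of the lift $\Phi$ on the real-part strip to the injectivity of $\phi$ on its $\pi$-image. The paper is extremely brief at this step, writing only that ``the injectivity of $\Phi$ induces that of $\phi$ because $\phi \sim id$ on $U$ as $R \to \infty$,'' whereas you spell out the mechanism via the explicit $2\pi i\,\mathbb{Z}^2$-equivariance of $\Phi$. These are two routes to the same fact: your direct computation from the formulas for $F^n$ and $F_0^{-n}$ works, and alternatively one can use the paper's hint that $\|\Phi - id\|$ is uniformly small, which forces the a priori lattice-valued difference $\Phi(Z+2\pi i m, W+2\pi i n)-\Phi(Z,W)$ to equal exactly $2\pi i(m,n)$. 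Either way the descent argument you wrote in the final paragraph is the correct conclusion. (One cosmetic point: the recurrence you quote for $\gamma_{n+1}$ has a stray ``$\delta\gamma_n\cdot 0$'' term; the intended identity $\gamma_{n+1}=\gamma\delta^n + d\gamma_n$ is what you need and is correct.)
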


\section{Extension of B\"{o}ttcher coordinates}

We extend the B\"{o}ttcher coordinate $\phi$ from $U$ to
a larger region in the union $A_{f}$ of all the preimages of $U$ under $f$.
Similar to the case of polynomials,
the obstruction is the critical set of $f$ 
and we use analytic continuation in the proof.

Let $\psi$ be the inverse of $\phi$.
Because $\phi \sim id$ on $U$ as $R \to \infty$,
we may say that $\psi$ is biholomorphic on $U$.  
Our aim in this section is actually to extend $\psi$ from $U$ to a larger region $V$.
We first state our result and prove it in Section 8.1.
Although the proof is almost the same as in \cite{ueno SA},
we take $V$ as a more general region than that in \cite{ueno SA}.
We then calculate the union $A_{f_0}$ of all the preimages of $U$ 
under the monomial map $f_0$ in Section 8.2 and
provide two concrete examples of $V$ with four parameters in Section 8.3.

\subsection{Statement and Proof}

Let $|\phi| = (|\phi_1|, |\phi_2|)$, 
which extends to a continuous map from $A_f$ to $\mathbb{R}^{2}$
via $(f_0 |_{\mathbb{R}^{2}})^{-n} \circ |\phi| \circ f^n$.
We require $V$ to be a connected, simply connected 
Reinhardt domain and included in $A_{f_0}$.
Moreover,
we require that $V \cap (\{ z \} \times \mathbb{C})$ is connected for any $z$.
For simplicity,
we also require $V$ to include $U$.

\begin{thm}\label{extension}
Let $V$ be a region as above. 
If $f$ has no critical points in $|\phi|^{-1} (V \cap \mathbb{R}^{2})$,
then $\psi$ extends by analytic continuation
to a biholomorphic map on $V$.
\end{thm}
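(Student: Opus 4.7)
My plan is to mirror the analytic continuation argument used in \cite{ueno SA}, adapting it to the more general region $V$ allowed here. The conjugacy $\psi$ on $U$ satisfies the functional equation $f \circ \psi = \psi \circ f_0$, so iterating gives $f^n \circ \psi = \psi \circ f_0^n$. Since $V \subset A_{f_0}$, every point of $V$ is eventually mapped into $U$ by some iterate of $f_0$, suggesting the tentative definition $\tilde{\psi}(z) = (f^n)^{-1} \circ \psi \circ f_0^n(z)$ for suitable $n$ and a suitable branch of $(f^n)^{-1}$.

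To make this precise I fix a base point $z_\ast \in U$ and, for any $z \in V$, choose a path $\gamma \subset V$ from $z_\ast$ to $z$. Compactness of $\gamma$ together with $V \subset A_{f_0}$ yields an $n$ with $f_0^n(\gamma) \subset U$. Then $\psi \circ f_0^n \circ \gamma$ is a path in $U$ starting at $z_\ast$, and I analytically continue a branch of $(f^n)^{-1}$ along this path starting from the germ determined by $\psi$ at $z_\ast$; the endpoint is declared to be $\tilde{\psi}(z)$. Independence from the choice of $n$ follows immediately from the functional equation, since stepping $n$ up by one inserts an extra $f^{-1}\circ \psi \circ f_0$, which collapses back to $\psi$. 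Independence from the path $\gamma$ follows from simple connectedness of $V$: any homotopy between two paths pushes forward by $f_0^n$ to a family of paths in $U$, which lifts back under continuously varying branches of $(f^n)^{-1}$, provided those branches never cross a critical value of $f^n$. The critical set of $f^n$ is $\bigcup_{j<n} f^{-j}(\mathrm{Crit}\, f)$, and the hypothesis that $f$ has no critical points in $|\phi|^{-1}(V\cap \mathbb{R}^2)$, combined with the Reinhardt structure of $V$ and the $f_0$-equivariance of $|\phi|$, guarantees exactly that this critical set is disjoint from the relevant lifted region. Hence monodromy is trivial and $\tilde{\psi}$ is well defined and holomorphic on $V$, agreeing with $\psi$ on $U$.

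For injectivity, if $\tilde{\psi}(z)=\tilde{\psi}(z')$ then applying $f^n$ gives $\psi(f_0^n(z))=\psi(f_0^n(z'))$ and hence $f_0^n(z)=f_0^n(z')$ by injectivity of $\psi$ on $U$; the assumed connectedness of each slice $V\cap(\{z\}\times\mathbb{C})$ together with the simple connectedness and Reinhardt structure of $V$ forbids the root-of-unity identifications that could otherwise arise from the monomial nature of $f_0$, so $z=z'$. The map $\tilde{\psi}$ is then biholomorphic from $V$ onto its image. I expect the main obstacle to be the monodromy step: one has to verify that deforming $\gamma$ never forces the chosen branch of $(f^n)^{-1}$ to pass through a critical value of $f^n$, and to articulate carefully how the one-variable control provided by $|\phi|$ on the real slice $V\cap\mathbb{R}^2$ suffices to control the full complex region $V$. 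This is precisely where the Reinhardt hypothesis and the equivariance $|\phi|\circ f=f_0\circ|\phi|$ on $A_f$ do all the work.
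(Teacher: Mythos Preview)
Your analytic-continuation construction of the extension matches what the paper invokes (by reference to Theorem~6 of \cite{ueno poly}), so that half is fine in spirit---though note that the Reinhardt domains $V$ in question (e.g.\ $\{|z|>r_1,\ |w|>r_2|z|^{a_1}\}$) sit in $(\mathbb{C}^*)^2$ and are not simply connected in the naive topological sense; path-independence really comes from the fact that $U\subset V$ already carries the full torus fundamental group and $\psi$ is single-valued there, not from a bare appeal to $\pi_1(V)=0$.

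The injectivity half is both different from the paper and incomplete. You correctly reduce $\tilde\psi(x)=\tilde\psi(x')$ to $f_0^n(x)=f_0^n(x')$, but then assert without argument that the hypotheses on $V$ ``forbid the root-of-unity identifications.'' They do not forbid them on the $f_0$ side: with $f_0(z,w)=(z^2,w^2)$ and $V=\{|z|>1,|w|>1\}$, the points $(2,2)$ and $(-2,2)$ both lie in $V$ and have the same $f_0$-image, yet every structural hypothesis on $V$ holds. What must actually be shown is that the \emph{particular} branches produced by your continuation separate such points, and that is a genuine argument you have not supplied.

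The paper's injectivity proof avoids this entirely. Since the extended $|\phi|$ on $A_f$ satisfies $|\phi\circ\psi|=|\mathrm{id}|$, any pair $x_1\neq x_2$ with $\psi(x_1)=\psi(x_2)$ must have $|z_1|=|z_2|$ and $|w_1|=|w_2|$. The set of such bad pairs is closed in $V\times V$ and $\psi$ is open; if the supremum of $|z_1|$ over bad pairs is attained, openness manufactures a nearby bad pair with strictly larger $|z_1|$, a contradiction. If not attained, one passes to a bad pair with $|z_1|>R^{1+l_2^{-1}}$ and reruns the open-mapping trick on $|w_1|$ inside the vertical slice. The slice-connectedness hypothesis is used precisely here---to ensure that $(V\setminus U)\cap(\{z_1\}\times\mathbb{C})$ is one annulus (Cases~1,~2) or at most two (Cases~3,~4), so that an extremal $|w_1|$ exists---and not to block deck transformations of $f_0$ as you suggest.
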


\begin{proof} 
Using the same arguments as the proof of Theorem 6 in \cite{ueno poly},
one can show that 
$\psi$ extends to a holomorphic map on $V$ by analytic continuation.

We show that $\psi$ is homeomorphism on $V$
by adapting the arguments of the proof of Theorem 9.5 in \cite{ueno SA}
to the case of polynomial skew products.
By the constriction of $\psi$, 
it is locally one-to-one,
and the set of all pairs $x_1 = (z_1,w_1) \neq x_2 = (z_2,w_2)$
with $\psi (x_1) = \psi (x_2)$ forms a closed subset of $V \times V$.
If $\psi (x_1) = \psi (x_2)$,
then $|z_1|=|z_2|$ and $|w_1|=|w_2|$ 
because $|\phi \circ \psi|=|id|$.
Assuming that there were such a pair with $\psi (x_1) = \psi (x_2)$,
we derive a contradiction.
There are two cases:
the maximum of $|z_1|$ exists or not.
First,
assume that the maximum exists.
Since $\psi$ is an open map,
for any $x'_1$ sufficiently close to $x_1$, 
we can choose $x'_2$ close to $x_2$ 
with $\psi (x'_1) = \psi (x'_2)$.
In particular,
we can choose $x'_j$ with $|z'_j| > |z_j|$,
which contradicts the choice of $z_j$.
Next,
assume that the maximum does not exist.
Then there is a pair with $|z_1| = |z_2| > R^{1+ l_2^{-1}}$.
Fix such $z_1$.
For Cases 1 and 2,
the intersection of $V - U$ and
the fiber $\{ z_1 \} \times \mathbb{C}$ is an annulus, 
and we can choose $|w_1|$ as maximal.
Using the same argument as above to the fibers
$\{ z_1 \} \times \mathbb{C}$ and $\{ z_2 \} \times \mathbb{C}$,
we can choose $x'_1 = (z_1,w'_1)$ and $x'_2 = (z_2,w'_2)$
so that $\psi (x'_1) = \psi (x'_2)$ and $|w'_j| > |w_j|$,
which contradicts the choice of $w_j$.
For Cases 3 and 4,
the intersection may consist of two annuli. 
For this case,
we can choose $|w_1|$ as minimal in the outer annulus
or as maximal in the inner annulus,
which derives a contradiction by the same argument as above.
\end{proof}

\subsection{Monomial maps}

Let $f_0 (z,w) = (z^{\delta}, z^{\gamma} w^{d})$,
where $\delta \geq 2$, $\gamma \geq 0$,
$d \geq 1$ and $\gamma + d \geq 2$.
Let $R >1$ and
\[
A_{f_0} = A_{f_0} (U) = \bigcup_{n \geq 0} f_0^{-n} (U),
\]
which is included in the divergent region for $f_0$.
The affine function 
\[
T(l) = \frac{\delta l - \gamma}{d}
\]
plays a central role to calculate $A_{f_0}$.
Since $f_0^n (z,w) = (z^{\delta^n}, z^{\gamma_n} w^{d^n})$
and $T^n(l) = (\delta^n l - \gamma_n)/d^n$, 
where $\gamma_n = \sum_{j=1}^{n} \delta^{n-j} d^{j-1} \gamma$,
the preimage $f_0^{-n} (U)$ is equal to
\begin{enumerate}
\item $\{ |z| > R^{{1/\delta}^n}, |w| > R^{1/d^{n}} |z|^{T^n(0)} \}$ for Case 1,
\item $\{ |z| > R^{1/{\delta}^{n}}, |w| > R^{1/d^{n}} |z|^{T^n(l_1)} \}$ for Case 2,  
\item $\{ R^{1/d^{n}} |z|^{T^n(0)} < |w| < R^{- l_2 /d^{n}} |z|^{T^n(l_2)} \}$ for Case 3, and
\item $\{ R^{1/d^{n}} |z|^{T^n(l_1)} < |w| < R^{- l_2 /d^{n}} |z|^{T^n(l_1 + l_2)} \}$ for Case 4.
\end{enumerate}
If $\delta \neq d$,
then 
\[
T(l) = \frac{\delta}{d} (l - \alpha_0) + \alpha_0
\text{ and so }
T^n (l) = \left( \frac{\delta}{d} \right)^n (l - \alpha_0) + \alpha_0,
\]
where $\alpha_0 = \gamma /(\delta - d)$.
Therefore,
for Case 1, the region $A_{f_0}$ is equal to
\begin{enumerate}
\item $\{ |z| > 1, w \neq 0 \}$ if $\delta \geq d$ and $\gamma > 0$,
\item $\{ |z| > 1, |z^{- \alpha_0} w| > 1 \}$ if $\delta < d$ and $\gamma > 0$, where $\alpha_0 < 0$, or
\item $\{ |z| > 1, |w| > 1 \}$ if $\gamma = 0$.
\end{enumerate}
For Case 2, the inequality $\delta \leq T_1$ holds and $A_{f_0}$ is equal to
\begin{enumerate}
\item $\{ |z| > 1, w \neq 0 \}$ if $T_{1} > \delta \geq d$ and $\gamma > 0$, 
\item $\{ |z| > 1, |w| > |z|^{\alpha_0} \}$ if $T_{s-1} = \delta > d \geq 2$ and $\gamma > 0$, 
\item $\{ |z| > 1, |z^{- \alpha_0} w| > 1 \}$ if $\delta < d$ and $\gamma > 0$, where $\alpha_0 < 0$,
\item $\{ |z| > 1, |w| > 1 \}$ if $\delta < d$ and $\gamma = 0$, or
\item $\{ |z| > 1, |w| > |z|^{l_1} \}$ if $\delta = d$ and $\gamma = 0$.
\end{enumerate}
For Case 3, the inequalities $\delta \geq T_{s-1} > d$ and $\gamma > 0$ hold and $A_{f_0}$ is equal to
\begin{enumerate}
\item $\{ |z| < 1, w \neq 0 \}$ if $\delta > T_{s-1}$, or
\item $\{ |z| > 1, 0 < |w| < |z|^{\alpha_0} \}$ if $\delta = T_{s-1}$ and $d \geq 2$.
\end{enumerate}
For Case 4, 
the inequalities $T_{k} \geq \delta \geq T_{k-1} > d$ and $\gamma > 0$ hold and $A_{f_0}$ is equal to
\begin{enumerate}
\item $\{ |z| > 1, w \neq 0 \}$ if $T_{k-1} < \delta < T_k$, 
\item $\{ |z| > 1, |w| > |z|^{\alpha_0} \}$ if $\delta = T_k$ and $d \geq 2$, or
\item $\{ |z| > 1, 0 < |w| < |z|^{\alpha_0} \}$ if $\delta = T_{k-1}$ and $d \geq 2$.
\end{enumerate}
Note that we only display the cases that appear in the main theorem;
we do not have the case $\delta > d$ and $\gamma = 0$ in Case 2, 
the case $\gamma = 0$ in Cases 3 and 4, and
the case $d = 1$ and $\delta = T_j$ for some $j$. 

\begin{rem}
For Case 1,
the region $A_{f_0}$ does not change even if we replace $U$ to 
the larger region described in Remark \ref{larger regions for case 1}.
\end{rem}

\subsection{Examples of $V$}

The following two concrete examples of $V$ satisfy
all the assumptions in Theorem {\rmfamily \ref{extension}}.

\begin{example}
Let $V = \{ |z| > r_1, |w| > r_2 |z|^{a_1} \}$ for Cases 1 and 2 and
let $V = \{ r_2 |z|^{a_1} < |w| < r_1^{-l_2} |z|^{a_2} \}$ for Cases 3 and 4,
where $1 \leq r_1 \leq R$, $1 \leq r_2 \leq R$ and
$- \infty \leq a_1 \leq l_1 < l_1 + l_2 \leq a_2 \leq \infty$.
\end{example}

\begin{example}
Let $V = \{ r_2 |z|^{a_1} < |w| < r_1^{-a_2} |z|^{a_2} \}$,
where $1 \leq r_1 \leq R^{l_2/(l_1 + l_2)}$, $1 \leq r_2 \leq R$ and
$- \infty \leq a_1 \leq l_1 < l_1 + l_2 \leq a_2 \leq \infty$.
\end{example}

Against the case of polynomials,
in which we only have one direction $r_1$ of extension,
here we have four directions $r_1$, $r_2$, $a_1$ and $a_2$
for the case of polynomial skew products.

For both examples,
V coincides with $U$ and realizes all the types of $A_{f_0}$
for suitable choices of the four parameters.
We remark that 
we do not need to require $V$ to include $U$;
Theorem {\rmfamily \ref{extension}} holds 
on $V \cup U$ if $V \cap U \neq \emptyset$.
Hence we may widen the ranges of the parameters in the examples above to
$1 \leq r_1$, $1 \leq r_2$, 
$- \infty \leq a_1 < \infty$ and $0 < a_2 \leq \infty$.

\section{Other changes of coordinate}

We provide two other changes of coordinate in the last section,
which are derived from the B\"{o}ttcher coordinate $\phi$ and
already appeared in Corollaries 1 and 11 in \cite{ueno poly}.
Although the relation between $\phi_2$ and $\chi$ 
for Cases 3 and 4 is less clear than that for Cases 1 and 2,
we obtain the same conclusion with the condition $d \geq 2$
for the former change of coordinate.
On the other hand,
we have the latter change of coordinate
even for the case $d = 1$.

Let $b(z)$ be the coefficient of $w^d$ in $q$.
Then $b(z) = b_{\gamma d} z^{\gamma} \{ 1 + \tilde{\zeta} (z) \}$ 
and $q(z,w) = b(z) w^d \{ 1 + \tilde{\eta} (z,w) \}$ on $U$,
where $\tilde{\zeta}$, $\tilde{\eta} \to 0$ on $U$ as $R \to \infty$, and so
the second component of $f^n$ is written as 
$B_n(z) w^{d^n} \{ 1 + \tilde{\eta}_n (z,w) \}$ on $U$,
where $B_n (z) = \prod_{j = 0}^{n-1} (b(p^j(z)))^{d^{n-1-j}}$ 
and $\tilde{\eta}_n \to 0$ on $U$ as $R \to \infty$.
Therefore,
\[
\phi_2 (z,w) = \lim_{n \to \infty} 
\sqrt[d^n]{\dfrac{B_n(z) w^{d^n} \{ 1 + \tilde{\eta}_n (z,w) \} }{(p^n(z))^{\gamma_n/ \delta^n}}}.
\]
We define
\[
\chi (z) = \lim_{n \to \infty} 
\sqrt[d^n]{\dfrac{B_n(z)}{(p^n(z))^{\gamma_n/ \delta^n}}}.
\]
We can show that 
$\chi$ is well defined and holomorphic on $\{ |z| > R \}$,
$\chi \to 1$ as $z \to \infty$ and
$\chi \circ p = b^{-1} \cdot \varphi_p^{\gamma} \cdot \chi^d$
if $d \geq 2$
by the same arguments as the proofs of Lemmas 8 and 9 in \cite{ueno poly}.
Let
\[
\tilde{\phi}_2 (z, w) = \dfrac{\phi_2 (z, w)}{\chi (z)}.
\]

\begin{cor}\label{another coord}
If $d \geq 2$, 
then the biholomorphic map $(z, \tilde{\phi}_2 (z, w))$ defined on $U$ 
conjugates $f$ to $(z,w) \to (p(z), b(z) w^d)$.
\end{cor}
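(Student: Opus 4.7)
The plan is to reduce the claimed conjugacy to the functional equation for $\chi$ already recorded just above the corollary, and then to obtain biholomorphicity as a straightforward composition. First I would write down $\Psi \circ f = g \circ \Psi$ component-wise, where $\Psi(z,w) = (z, \tilde{\phi}_2(z,w))$ and $g(z,w) = (p(z), b(z)w^d)$. The first component gives the trivial identity $p(z) = p(z)$, while the second reduces the whole question to the single scalar identity
\[
\tilde{\phi}_2(p(z), q(z,w)) = b(z)\,\tilde{\phi}_2(z,w)^d.
\]

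Next I would expand both sides using $\tilde{\phi}_2 = \phi_2/\chi$ together with Theorem \ref{main theorem}. Since $f$ is a skew product, $\phi_1$ depends on $z$ only and coincides with the one-dimensional B\"{o}ttcher coordinate $\varphi_p$; the second component of $\phi \circ f = f_0 \circ \phi$ then gives $\phi_2 \circ f = \varphi_p^{\gamma}\phi_2^d$. Substituting this into the displayed identity and cancelling the common factor $\phi_2(z,w)^d$ turns it into
\[
\chi(z)^d\,\varphi_p(z)^{\gamma} = b(z)\,\chi(p(z)),
\]
which is precisely the functional equation $\chi\circ p = b^{-1}\varphi_p^{\gamma}\chi^d$ stated in the paragraph preceding the corollary. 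The hypothesis $d \geq 2$ enters only to ensure that $\chi$ converges and hence that this equation is available.

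For biholomorphicity I would factor $\Psi$ as a composition of two biholomorphisms. The projection of $U$ onto the $z$-plane lies in $\{|z|>R\}$ in every Case (in Cases 3 and 4 the inequalities $R|z|^{l_1}<|w|<R^{-l_2}|z|^{l_1+l_2}$ force $|z|>R^{1+1/l_2}$), so $\chi$ is holomorphic there and, because $\chi\to 1$ at infinity, non-vanishing after possibly enlarging $R$. Hence $(z,w)\mapsto (z, w/\chi(z))$ is a biholomorphism of $U$. On the other hand, $(z,w)\mapsto (z, \phi_2(z,w))$ is the composition of the biholomorphism $\phi = (\varphi_p(z), \phi_2(z,w))$ of Theorem \ref{main theorem} with $(u,v)\mapsto (\varphi_p^{-1}(u), v)$, which is biholomorphic near infinity by the one-dimensional B\"{o}ttcher theorem. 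Composing the two gives $\Psi$.

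The main obstacle is essentially conceptual rather than computational: unlike in Cases 1 and 2, the infinite-product form of $\phi_2$ is less transparent in Cases 3 and 4, so it is important to argue at the level of the abstract functional equations $\phi \circ f = f_0 \circ \phi$ and $\chi\circ p = b^{-1}\varphi_p^{\gamma}\chi^d$ rather than by manipulating the limit expressions directly. Once one accepts those two equations the corollary is a two-line cancellation.
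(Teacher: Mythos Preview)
Your argument is correct and is exactly the intended one: the paper does not spell out a proof of this corollary but states it as an immediate consequence of the functional equation $\chi\circ p = b^{-1}\varphi_p^{\gamma}\chi^{d}$ together with the conjugacy $\phi\circ f = f_0\circ\phi$ from Theorem~\ref{main theorem}, and you have carried out precisely that two-line cancellation. Your remark that one should work at the level of the functional equations rather than the limit formulas is also the point behind the paper's comment that ``the relation between $\phi_2$ and $\chi$ for Cases 3 and 4 is less clear,'' yet the conclusion still holds.
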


In addition,
if $\delta \neq d$ and $\alpha_0$ is integer, 
then let
\[
\phi_z^{\alpha_0} (w)
= \dfrac{\phi_2 (z, w)}{(\phi_1 (z))^{\alpha_0}}.
\]  

\begin{cor}
Let $d \geq 2$ or let $d = 1$ and $\delta \neq T_k$ for any $k$. 
If $\delta \neq d$ and $\alpha_0$ is integer, 
then the biholomorphic maps $(\phi_1 (z), \phi_z^{\alpha_0} (w))$ 
and $(z, \phi_z^{\alpha_0} (w))$ defined on $U$ conjugate $f$ 
to $(z,w) \to (z^{\delta}, w^d)$ and $(z,w) \to (p(z), w^d)$,
respectively.
\end{cor}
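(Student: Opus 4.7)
The plan is to derive both conjugacies by direct computation from the defining identity $\phi \circ f = f_0 \circ \phi$ obtained in Theorem \ref{main theorem}. Since $f$ is skew, writing $\phi = (\phi_1, \phi_2)$ with $\phi_1 = \phi_1(z)$, the equation $\phi \circ f = f_0 \circ \phi$ unpacks into the two component identities
\[
\phi_1(p(z)) = \phi_1(z)^{\delta}, \qquad \phi_2(f(z,w)) = \phi_1(z)^{\gamma} \phi_2(z,w)^{d}.
\]
From these the behaviour of $\phi_z^{\alpha_0}(w) = \phi_2(z,w)/\phi_1(z)^{\alpha_0}$ under $f$ follows readily, and the single arithmetic input needed is the defining relation $\alpha_0 (\delta - d) = \gamma$.

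Concretely, for the first conjugacy let $\Psi(z,w) = (\phi_1(z), \phi_z^{\alpha_0}(w))$. Then
\[
\Psi(f(z,w)) = \Bigl( \phi_1(z)^{\delta},\ \phi_1(z)^{\gamma - \alpha_0 \delta} \phi_2(z,w)^{d} \Bigr),
\]
while $(z,w) \mapsto (z^{\delta}, w^{d})$ applied to $\Psi(z,w)$ produces $(\phi_1(z)^{\delta}, \phi_1(z)^{-\alpha_0 d} \phi_2(z,w)^{d})$. The two right-hand sides coincide precisely because $\gamma - \alpha_0 \delta = -\alpha_0 d$. The same computation, using $p$ in place of $z^{\delta}$ in the first slot, gives the second conjugacy for $(z, \phi_z^{\alpha_0}(w))$ and the target $(z,w) \mapsto (p(z), w^{d})$.

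For the biholomorphicity statement, I would note that $\phi_1$ is nowhere zero on $U$ because Theorem \ref{main theorem} gives $\phi_1 \sim z$ with $|z| > R$, so division by $\phi_1(z)^{\alpha_0}$ is legitimate; the integrality of $\alpha_0$ is what makes $\phi_1^{\alpha_0}$ a globally defined single-valued holomorphic function on $U$ (no branch issue). Both maps are holomorphic, and their Jacobian determinants factor as $\phi_1'(z) \cdot \partial_w \phi_2(z,w) / \phi_1(z)^{\alpha_0}$ and $\partial_w \phi_2(z,w)/\phi_1(z)^{\alpha_0}$ respectively, both nonvanishing since $\phi$ is biholomorphic on $U$. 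Injectivity then follows from the injectivity of $\phi$ together with the fact that each map is, in appropriate coordinates, a triangular composition with invertible factors in each variable.

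The only delicate point, which I expect to be the minor obstacle, is checking that the biholomorphicity genuinely descends to these modified coordinates rather than just to $\phi$ itself; this is handled by the triangular structure of the transformation $(z,w) \mapsto (z, w/\phi_1(z)^{\alpha_0})$ and the fact that $\phi_1(z)^{\alpha_0}$ depends only on $z$. The extension from the first conjugacy to the second is then immediate by post-composing with $(\phi_1^{-1}, \mathrm{id})$ in the first coordinate, using that $\phi_1$ itself conjugates $p$ to $z \mapsto z^{\delta}$.
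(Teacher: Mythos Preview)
Your proof is correct and follows the natural direct-verification approach that the paper implicitly intends: the paper states this corollary without proof (noting only that it already appeared as Corollaries 1 and 11 in \cite{ueno poly}), and your computation from the component identities $\phi_1\circ p=\phi_1^{\delta}$ and $\phi_2\circ f=\phi_1^{\gamma}\phi_2^{d}$ together with $\alpha_0(\delta-d)=\gamma$ is exactly the intended verification. One small remark: the fact that $\phi_1$ depends only on $z$ is not automatic from $f$ being skew, but follows from the explicit construction of $\phi_n$ in Section~7.1, so you may want to cite that rather than the skew structure of $f$ alone.
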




\begin{thebibliography}{9}

\bibitem{b}
  \textsc{L. E. B\"{o}ttcher},
  \textit{The principal laws of convergence of iterates and their application to analysis} (Russian),
  Izv. Kazan. Fiz.-Mat. Obshch. \textbf{14} (1904), 155-234. 
\bibitem{fg}
  \textsc{C. Favre and V. Guedj}, 
  \textit{Dynamique des applications rationnelles des espaces multiprojectifs},
  Indiana Univ. Math. J., \textbf{50} (2001), 881-934. 
\bibitem{fj SA}
  \textsc{C. Favre and M. Jonsson}, 
  \textit{Eigenvaluations},
  Ann. Sci. \'Ecole Norm. Sup. \textbf{40} (2007), 309-349.
\bibitem{fj poly}
  \textsc{C. Favre and M. Jonsson}, 
  \textit{Dynamical compactifications of $\mathbf{C}^2$},
  Ann. of Math. \textbf{173} (2011), 211-249. 
\bibitem{hp}
  \textsc{J. H. Hubbard and P. Papadopol}, 
  \textit{Superattractive fixed points in $\mathbf{C}^{n}$},
  Indiana Univ. Math. J. \textbf{43} (1994), 321-365.
\bibitem{j}
  \textsc{M. Jonsson}, 
  \textit{Dynamics of polynomial skew products on $\mathbf{C}^{2}$},
  Math. Ann., \textbf{314} (1999), 403-447. 
\bibitem{m}
  \textsc{J. Milnor}, 
  \textit{Dynamics in one complex variable},
  Annals of Mathematics Studies, \textbf{160}, 
  Princeton University Press, 2006. 
\bibitem{ueno poly}
  \textsc{K. Ueno}, 
  \textit{B\"{o}ttcher coordinates for polynomial skew products},
  Ergodic Theory Dynam. Systems, \textbf{36} (2016), 1260-1277.
\bibitem{ueno SA}
  \textsc{---}, 
  \textit{A construction of B\"{o}ttcher coordinates for holomorphic skew products},
  Nonlinearity \textbf{32} (2019), 2694-2720. 
\bibitem{ueno attr rates}
  \textsc{---}, 
  \textit{Attraction rates for iterates of a superattracting skew product},
  preprint arXiv:2105.04068 (2021).
\bibitem{ueno Green funs}
  \textsc{---},  
  \textit{Dynamics of superattracting skew products on the attracting basins: 
  B\"{o}ttcher coordinates and plurisubharmonic functions},
  preprint arXiv:2304.09457 (2023).
  
\end{thebibliography}
\end{document}